\newtheorem{theorem}{Theorem}
\theoremstyle{plain}
\newtheorem{claim}[theorem]{Claim}
\newtheorem{corollary}[theorem]{Corollary}
\newtheorem{fact}[theorem]{Fact}
\newtheorem{lemma}[theorem]{Lemma}
\newtheorem{proposition}[theorem]{Proposition}
\numberwithin{equation}{section}
\numberwithin{theorem}{section}
\numberwithin{case}{section}
\numberwithin{subcase}{case}
\def\A{\mathcal{A}}
\def\F{\mathcal{F}}
\def\V{\mathcal{V}}
\def \a{\alpha}
\def \b{\beta}
\def \e{\epsilon}
\def \r{\gamma}
\def \bfi{\mathbf{i}}
\def \bfu{\mathbf{u}}
\def \bfv{\mathbf{v}}
\def \bfw{\mathbf{w}}
\def\cP{\mathcal{P}}
\begin{document}
\title{Matchings in $k$-partite $k$-uniform Hypergraphs}
\thanks{
The first author is supported by FAPESP (Proc. 2013/03447-6, 2014/18641-5, 2015/07869-8).
The third author is partially supported by NSF grants DMS-1400073 and DMS-1700622.}
\author{Jie Han}
\address{Instituto de Matem\'{a}tica e Estat\'{\i}stica, Universidade de S\~{a}o Paulo, Rua do Mat\~{a}o 1010, 05508-090, S\~{a}o Paulo, Brazil}
\email[Jie Han]{jhan@ime.usp.br}
\author{Chuanyun Zang}
\author{Yi Zhao}
\address
{Department of Mathematics and Statistics, Georgia State University, Atlanta, GA 30303}
\email[Chuanyun Zang]{chuanyun.zang@gmail.com}
\email[Yi Zhao]{yzhao6@gsu.edu}

\subjclass[2010]{Primary 05C70, 05C65}%
\keywords{matching, hypergraph, absorbing method}%
\date{\today}
\maketitle

\begin{abstract}
For $k\ge 3$ and $\e>0$, let $H$ be a $k$-partite $k$-graph with parts $V_1,\dots, V_k$ each of size $n$, where $n$ is sufficiently large.
Assume that for each $i\in [k]$, every $(k-1)$-set in $\prod_{j\in [k]\setminus \{i\}} V_i$ lies in at least $a_i$ edges, and $a_1\ge a_2\ge \cdots \ge a_k$. We show that if $a_1, a_2\ge \e n$, then $H$ contains a matching of size $\min\{n-1, \sum_{i\in [k]}a_i\}$.
In particular, $H$ contains a matching of size $n-1$ if each crossing $(k-1)$-set lies in at least $\lceil n/k \rceil$ edges, or each crossing $(k-1)$-set lies in at least $\lfloor n/k \rfloor$ edges and $n\equiv 1\bmod k$. This special case answers a question of R\"odl and Ruci\'nski and was independently obtained by Lu, Wang, and Yu. 

The proof of Lu, Wang, and Yu closely follows the approach of Han [Combin. Probab. Comput. 24 (2015), 723--732] by using the absorbing method and considering an extremal case. In contrast, our result is more general and its proof is thus more involved: it uses a more complex absorbing method and deals with two extremal cases.
\end{abstract}

\section{Introduction}

A \emph{$k$-uniform hypergraph} (in short, \emph{$k$-graph}) consists of a vertex set $V$ and an edge set $E\subseteq \binom{V}{k}$, that is, every edge is a $k$-element subset of $V$.
A $k$-graph $H$ is \emph{$k$-partite} if $V(H)$ can be partitioned into $k$ parts $V_1, \dots, V_k$ such that every edge consists of exactly one vertex from each class, in other words, $E(H)\subseteq V_1\times \cdots \times V_k$.
A \emph{matching} in $H$ is a collection of vertex-disjoint edges of $H$. A matching covering all vertices of $H$ is called \emph{perfect}.

Given a $k$-graph $H$ and a set $S$ of $d$ vertices in $V(H)$, where $1\le d\le k-1$, a \emph{neighbor} of $S$ is a $(k-d)$-set $T\subseteq V(H)\setminus S$ such that $S\cup T\in E(H)$. Denote by $N_H(S)$ the set of the neighbors of $S$, and define 
the \emph{degree} of $S$ to be $\deg_H(S) = |N_H(S)|$. 
We omit the subscript $H$ if it is clear from the context. The \emph{minimum $d$-degree $\delta _{d}(H)$} of $H$ is the minimum of $\deg_H(S)$ over all $d$-subsets $S$ of $V(H)$.
The minimum $(k-1)$-degree is also called the \emph{minimum codegree}.

The minimum $d$-degree thresholds that force a perfect matching in $k$-graphs have been studied intensively, see \cite{AFHRRS,CzKa,HPS,Han16_mat,Khan1,Khan2,KOT,MaRu,Pik, RRS06mat, RRS09, TrZh12,TrZh13,TrZh15} and surveys \cite{RR, zsurvey}.
In particular, R\"odl, Ruci\'nski and Szemer\'edi \cite{RRS09} determined the minimum codegree threshold that guarantees a perfect matching in an $n$-vertex $k$-graph for large $n$ and all $k\ge 3$. The threshold is $n/2-k+C$, where $C\in\{3/2, 2, 5/2, 3\}$ depending on the values of $n$ and $k$.
In contrast, the minimum codegree threshold for a matching of size $\lceil n/k\rceil - 1$ is much smaller. 
R\"odl, Ruci\'nski and Szemer\'edi \cite{RRS09} showed that %when $n$ is not divisible by $k$, 
every $k$-graph $H$ on $n$ vertices satisfying $\delta_{k-1}(H)\ge n/k+O(\log n)$ contains a matching of size $\lceil n/k\rceil - 1$.
%which shows that the non-divisible case behaves quite differently from the divisible case.
Han \cite{Han14_mat} improved this by reducing the assumption to $\delta_{k-1}(H)\ge \lceil n/k\rceil - 1$, which is  best possible.

In this paper we are interested in the corresponding thresholds in $k$-partite $k$-graphs.
Suppose $H$ is a $k$-partite $k$-graph with parts $V_1, \dots, V_k$. A subset $S\subset V(H)$ is called \emph{crossing} if $|S\cap V_i|\le 1$ for all $i$.
For any $I\subseteq [k]$, let $\delta_I(H)$ be the minimum of $\deg_{H}(S)$ taken over all crossing $|I|$-vertex sets $S$ in $\prod_{i\in I}V_i$.
Then the \emph{partite minimum $d$-degree} $\delta'_d(H)$ is defined as the minimum of $\delta_I(H)$ over all $d$-element sets $I\subseteq [k]$.

Let $H$ be a $k$-partite $k$-graph with $n$ vertices in each part.
For $k\ge 3$, K\"uhn and Osthus \cite{KO06mat} proved that if $\delta_{k-1}'(H) \ge n/2+\sqrt{2n\log n}$ then $H$ has a perfect matching. Later Aharoni, Georgakopoulos and Spr\"ussel~\cite{AGS} improved this result by requiring only two partite minimum codegrees.
They showed that $H$ contains a perfect matching if $\delta_{[k]\setminus {\{1\}}}(H)> n/2$ and $\delta_{[k]\setminus {\{2\}}}(H)\ge n/2$, and consequently, if $\delta'_{k-1}(H)>n/2$ then $H$ has a perfect matching.

Similarly to the non-partite case, when targeting almost perfect matchings, the minimum degree threshold also drops significantly. K\"uhn and Osthus in \cite{KO06mat} proved that $\delta'_{k-1}(H)\ge \lceil n/k \rceil$ guarantees a matching of size $n-(k-2)$.
R\"odl and Ruci\'nski \cite[Problem 3.14]{RR} asked whether $\delta'_{k-1}(H)\ge \lceil n/k \rceil$ guarantees a matching in $H$ of size $n-1$.
In this paper, we answer this question in the affirmative and show that the threshold can be actually weakened to $\lfloor n/k \rfloor$ if $n\equiv 1\pmod k$.
In fact, our result is much more general -- it only requires that the sum of the partite minimum codegrees is large and at least two partite codegrees are not small.

\begin{theorem}[Main Result]\label{thm:main}
For any $k\ge 3$ and $\e>0$, there exists $n_0$ such that the following holds for all $n\ge n_0$.
Let H be a $k$-partite $k$-graph with parts of size $n$ and $a_i:=\delta_{[k]\setminus\{i\}}(H)$ for all $i\in [k]$ such that $a_1\ge a_2\ge \cdots \ge a_k$ and $a_2>\e n$.
Then $H$ contains a matching of size at least $\min\{n-1, \sum_{i=1}^k a_i\}$.
\end{theorem}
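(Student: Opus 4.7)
The plan is to use the absorbing method adapted to the $k$-partite setting, splitting into a non-extremal regime handled by absorption plus an almost-spanning matching argument, and two extremal regimes handled by direct combinatorial constructions. Let $s := \min\{n-1, \sum_{i=1}^{k} a_i\}$ be the target size.

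First I would establish an \emph{absorbing lemma}. Call a pair $(e,T)$, where $e$ is a crossing $k$-tuple and $T \subseteq V(H) \setminus e$ is a balanced crossing set of bounded size, an $e$-absorber if both $H[T]$ and $H[T \cup e]$ contain perfect matchings. The key claim is that outside of two specific extremal configurations (to be defined via partitions of the $V_i$'s), every crossing $k$-set $e$ admits $\Omega(n^{|T|})$ absorbers; a standard random-selection argument (Chernoff plus alteration) then produces a small matching $M_0 \subseteq E(H)$ with $|V(M_0)| = o(n)$ such that for every sufficiently small balanced leftover set $R \subseteq V(H) \setminus V(M_0)$, the induced hypergraph $H[V(M_0) \cup R]$ has a perfect matching. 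Because $|V(M_0)| = o(n)$, after setting $H' := H - V(M_0)$ the degree bounds $a_1, a_2 \ge \e n$ are essentially preserved in $H'$.

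In the non-extremal case, it then remains to produce an almost-spanning matching in $H'$ leaving a small balanced remainder $R$ which $M_0$ can swallow. Iterating the degree condition, or passing through a weak regularity / fractional matching argument and rounding, yields a matching covering all but $o(n)$ vertices of $H'$; combining this with the absorbing property of $M_0$ gives a matching of size at least $s$. The two extremal cases arise exactly when absorbers are scarce. I expect them to correspond to situations where each $V_i$ splits as $A_i \cup B_i$ with $|A_i|$ comparable to one of $a_i$ or $n - a_i$, and edges either almost always or almost never meet a prescribed pattern of $A_i$'s. In each such template, one partitions potential matching edges by their type with respect to the $A_i/B_i$ partitions, uses the degree assumptions to count each type, and patches them together into a matching of size $\min\{n-1, \sum a_i\}$ via careful bookkeeping.

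The main obstacle will be the extremal analysis. Unlike the single extremal case treated by Han in the non-partite setting and by Lu--Wang--Yu under the stronger assumption that every codegree is at least $\lceil n/k \rceil$, the present asymmetric condition (only $a_1, a_2 \ge \e n$ required, with $a_3, \dots, a_k$ potentially very small) admits strictly more extremal behaviors, since parts $V_3, \dots, V_k$ may be nearly decoupled from certain codegree directions while still being forced to contribute through the remaining two. Classifying precisely which bad structures arise, verifying that the absorbing lemma covers everything outside them, and then constructing matchings of size $s$ within each template will be the technical heart of the argument; the case where $\sum a_i$ lies near the transition $n - 1$ between the two options inside $\min\{n-1, \sum a_i\}$ is likely to be especially delicate.
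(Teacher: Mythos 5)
Your outline captures the correct high-level strategy -- absorbing method plus an almost-spanning matching, with extremal cases handled separately -- and your guess about the two extremal templates (a partition $A_i \cup B_i$ with size constraints tied to $a_i$, edges obeying a pattern with respect to that partition) matches the paper's space barrier and divisibility barrier. However, the proposal glosses over what is arguably the central technical point. You treat absorbing as a single mechanism, asserting that ``outside of two specific extremal configurations \ldots every crossing $k$-set $e$ admits $\Omega(n^{|T|})$ absorbers.'' The paper in fact needs \emph{two qualitatively different} absorbing lemmas, and this is not cosmetic: when $a_3 \ge \e n$ a simple local absorber (an $S$-absorbing \emph{edge} for a balanced $2k$-set $S$, in the style of R\"odl--Ruci\'nski--Szemer\'edi) is abundant by a direct degree count. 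But in the regime allowed by the hypothesis -- only $a_1, a_2 > \e n$ with $a_3, \dots, a_k$ arbitrarily small -- that count fails, since one cannot freely extend into parts with tiny codegree. The paper then switches to a \emph{lattice-based} absorbing method (Lo--Markstr\"om reachability, Keevash--Mycroft index vectors), and it is precisely the analysis of when this fails that produces the divisibility barrier. Your proposal does not recognize that a different machinery is needed there, nor that the route to the $D$-extremal case is through the lattice, not through a raw absorber count.

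A second, smaller inaccuracy: you say the two extremal cases ``arise exactly when absorbers are scarce.'' That is true for the divisibility barrier but not the space barrier. The space-barrier ($S$-extremal) case is detected by the failure of the \emph{almost-perfect matching} step (the greedy/iterated degree argument in Lemma~\ref{lem:alm_mat}), which can stall even when absorbers are plentiful; this is orthogonal to the absorber count. You also omit the easy base case ($\sum a_i \le n-k+2$ or $a_1 \ge (1-\e)n$, dispatched by a simple greedy swap, Fact~\ref{thm:mat_num}), which is needed so that the later theorems can assume both $\sum a_i \ge n-k+3$ and $a_1 \le (1-\e)n$. Finally, your suggestion of a fractional matching / regularity-and-rounding route for the almost-spanning matching would likely work, but the paper's direct greedy argument is simpler and doubles as the detector of the space-barrier case; the two would not be interchangeable without restructuring the case analysis. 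In short: right skeleton, but the decisive ideas -- the two-tier absorbing machinery and the precise mechanism by which each extremal case is triggered -- are missing.
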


Our proof, based on the absorbing method, unfortunately fails when $a_1$ is close to $n$ and all of $a_2,\dots, a_k$ are small.  It is unclear (to us) if the same assertion holds in this case.

The following corollary follows from Theorem~\ref{thm:main} immediately. It was announced at~\cite{Zang16talk} and appeared in the dissertation of the second author \cite{Zang_thesis}. The second case of Corollary~\ref{cor} resolves \cite[Problem 3.14]{RR} and was independently proven by Lu, Wang and Yu \cite{LWY}.

\begin{corollary}\label{cor}
Given $k\ge 3$, there exists $n_0$ such that the following holds for all $n\ge n_0$.
Let H be a $k$-partite $k$-graph with parts of size $n$.
Then $H$ contains a matching of size $n-1$ if one of the following holds.
\begin{itemize}
\item $n\equiv 1\bmod k$ and $\delta'_{k-1}(H)\ge \lfloor n/k \rfloor$;
\item $\delta'_{k-1}(H)\ge \lceil n/k \rceil$.
\end{itemize}
\end{corollary}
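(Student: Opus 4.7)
The plan is to derive Corollary~\ref{cor} as a direct application of Theorem~\ref{thm:main}. Let $H$ be a $k$-partite $k$-graph satisfying either hypothesis of the corollary, and set $a_i:=\delta_{[k]\setminus\{i\}}(H)$ for $i\in[k]$. After relabeling the parts we may assume $a_1\ge a_2\ge\cdots\ge a_k$. The partite minimum codegree $\delta'_{k-1}(H)$ is by definition $\min_{i}a_i=a_k$, so in either case we have $a_k\ge \lfloor n/k\rfloor$.

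Next I would choose the constant $\e$ to feed into Theorem~\ref{thm:main}. Take, for instance, $\e:=1/(2k)$, and let $n_0$ be at least the threshold supplied by Theorem~\ref{thm:main} for this $\e$ and large enough that $\lfloor n/k\rfloor>\e n$ for all $n\ge n_0$. Then $a_2\ge a_k\ge\lfloor n/k\rfloor>\e n$, so the hypotheses of Theorem~\ref{thm:main} are satisfied. It yields a matching of size $\min\{n-1,\sum_{i=1}^k a_i\}$ in $H$.

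The remaining step is to check that $\sum_{i=1}^k a_i\ge n-1$ in both cases, which will force $\min\{n-1,\sum_i a_i\}=n-1$. Under the second bullet, $a_i\ge \lceil n/k\rceil$ for every $i$, so $\sum_i a_i\ge k\lceil n/k\rceil\ge n>n-1$. Under the first bullet, we write $n=qk+1$ for a nonnegative integer $q$; then $\lfloor n/k\rfloor=q$ and $a_i\ge q$ for each $i$, which gives $\sum_i a_i\ge kq=n-1$. Either way Theorem~\ref{thm:main} produces a matching of size $n-1$, as desired.

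There is no real obstacle to overcome here: the only ``content'' is the verification that the degree sum $\sum_i a_i$ clears the threshold $n-1$ in the $n\equiv 1\pmod k$ boundary case, and a quick arithmetic check that $\lfloor n/k\rfloor$ is large enough in $n$ to satisfy the lower bound $a_2>\e n$ needed to invoke Theorem~\ref{thm:main}. All the difficulty of the corollary is absorbed into Theorem~\ref{thm:main} itself, whose proof (rather than the proof of the corollary) is where the genuine work of the paper lies.
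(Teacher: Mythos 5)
Your proof is correct and is exactly the derivation the paper has in mind: it states that the corollary ``follows from Theorem~\ref{thm:main} immediately,'' and the only content to supply is the arithmetic verification that $\sum_i a_i\ge n-1$ in both cases and that $\lfloor n/k\rfloor>\e n$ for a suitable fixed $\e$, which you do cleanly.
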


Let $\nu(H)$ be the size of a maximum matching in $H$. The following greedy algorithm, which essentially comes from \cite{KO06mat}, gives a simple proof of Theorem \ref{thm:main} when $\sum_{i=1}^k a_i \le n-k+2$ or when $a_1+a_2\ge n-1$.

\begin{fact}\label{thm:mat_num}
Let $n\ge k-2$.
Suppose H is a $k$-partite $k$-graph with parts of size at least $n$. Let $a_i:=\delta_{[k]\setminus\{i\}}(H)$ for $i\in [k]$.
Then
\[
\nu(H)\ge \min\left\{n-k+2, \,\sum_{i=1}^k a_i \right\}  \quad \text{and} \quad \nu(H)\ge \min\{n-1, \,a_1+a_2\}.
\]
\end{fact}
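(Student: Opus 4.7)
The plan is to take a maximum matching $M$ in $H$ with $m := |M|$, set $W_i := V_i \cap V(M)$ and $U_i := V_i \setminus V(M)$, and prove each of the two bounds by contradiction: if $m$ fails to reach the capacity term $n-k+2$ (resp.\ $n-1$), then $|U_j| \ge k-1$ (resp.\ $\ge 2$) for every $j$, and we must show $m \ge \sum_i a_i$ (resp.\ $m \ge a_1+a_2$). The baseline observation is the standard greedy bound $a_i \le m$ for every $i$: picking any crossing $(k-1)$-set $S \in \prod_{j\ne i} U_j$, maximality of $M$ forces $N_H(S) \cap U_i = \emptyset$ (else $S\cup\{v\}$ would be an edge of $H$ disjoint from $M$), so $N_H(S) \subseteq W_i$ and $a_i \le m$.

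For part~(b), I would fix $u_j \in U_j$ for $j=3,\dots,k$ and form the bipartite auxiliary graph $G$ on $V_1 \cup V_2$ in which $v_1 v_2$ is an edge iff $(v_1,v_2,u_3,\dots,u_k) \in E(H)$. The codegree hypotheses give minimum degree at least $a_2$ on the $V_1$ side and at least $a_1$ on the $V_2$ side of $G$, while maximality of $M$ renders $G[U_1,U_2]$ edgeless. Defect Hall applied to $G[U_1,W_2]$ and $G[W_1,U_2]$ (each with its relevant side of minimum degree $\ge a_2$ or $\ge a_1$) produces disjoint matchings of sizes $a_2$ and $a_1$ respectively, whose union $M_G \subseteq G$ has size $a_1+a_2$. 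The map $\phi : M_G \to M$ sending each $G$-edge to the $M$-edge containing its endpoint in $W_1\cup W_2$ has fibers of size at most two; injectivity of $\phi$ gives $a_1+a_2 \le m$ directly, while non-injectivity yields some $e^* = (w_1^*,\dots,w_k^*) \in M$ along with coincident $H$-edges $(v_1^*, w_2^*, u_3,\dots,u_k)$ and $(w_1^*, v_2^*, u_3,\dots,u_k)$ where $v_1^*\in U_1$ and $v_2^*\in U_2$.

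The non-injective case is resolved by a length-$2$ swap: using $|U_j|\ge 2$ for $j\ge 3$, one reruns the bipartite reduction with a modified tuple $(u_3',\dots,u_k')$ and either obtains an injective $\phi'$ (yielding the desired bound) or combines the two overlap configurations into disjoint edges $f_1, f_2 \in E(H)$ with $V(f_1) \cup V(f_2) \subseteq V(e^*) \cup \bigcup_i U_i$, so that $M - e^* + f_1 + f_2$ is a matching of size $m+1$, contradicting maximality of $M$. Part~(a) follows by the analogous scheme run simultaneously over all $k$ coordinates: fixing $u_j\in U_j$ for every $j$ and setting $N_i := N_H(\{u_j\}_{j\ne i})\cap V_i\subseteq W_i$ with $|N_i|\ge a_i$, the pigeonhole $\sum |N_i|\ge \sum a_i > m$ forces some $e^*\in M$ to be hit by the $N_i$ in at least two coordinates, and the extra room $|U_j|\ge k-1$ (afforded by $m < n-k+2$) supplies the flexibility for the $k$-way analog of the swap.

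The main obstacle is precisely this length-$2$ (or $k$-way) swap: the two coincident augmenting $H$-edges produced by the overlap share the tuple $(u_3,\dots,u_k)$, so they cannot both join a matching simultaneously. Escaping this requires leveraging $|U_j|\ge 2$ (or $\ge k-1$ for part~(a)) together with the already-derived inequality $a_i\le m$ to locate a suitable disjoint second edge via the codegree hypothesis; the codegree bound is only marginally strong enough to carry this out, which constitutes the main technical content of the argument.
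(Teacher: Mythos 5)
There is a genuine gap, and in fact two problems. First, a concrete step fails as stated: in your bipartite reduction for the second bound, defect Hall cannot produce matchings of sizes $a_2$ and $a_1$ inside $G[U_1,W_2]$ and $G[W_1,U_2]$, because $|U_1|=|U_2|=n-m$ may be far smaller than $a_1,a_2$ (e.g.\ $m=n-2$ but $a_2$ of order $n$); so the injective-$\phi$ route to $a_1+a_2\le m$ is not established in the form you set up. Second, and more fundamentally, the augmentation step that you yourself flag as ``the main technical content'' is exactly the part that is missing: because you fix a single shared tail $(u_3,\dots,u_k)$, the two coincident $H$-edges $(v_1^*,w_2^*,u_3,\dots,u_k)$ and $(w_1^*,v_2^*,u_3,\dots,u_k)$ intersect and cannot both be swapped in, and the proposed remedy (``rerun with a modified tuple, or combine the configurations'') is not carried out and does not obviously work, since the codegree hypothesis gives you no control over which vertex of $V_2$ (resp.\ $V_1$) completes an edge through a new tail. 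The same unresolved issue afflicts your $k$-way version for the first bound.

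The paper's proof removes the obstacle before it arises by making the tails disjoint from the start: if $m\le\min\{n-k+1,\sum_i a_i-1\}$, each class has at least $k-1$ unmatched vertices, so one can pick $k$ \emph{pairwise disjoint} crossing $(k-1)$-sets $U_1,\dots,U_k$ of unmatched vertices with $U_i\cap V_i=\emptyset$. By maximality, $N(U_i)\subseteq V_i\cap V(M)$ and $|N(U_i)|\ge a_i$; since each edge of $M$ meets each $N(U_i)$ in at most one vertex, the count $\sum_i a_i>m$ forces some $e\in M$ to contain neighbors $v_i\in N(U_i)$ and $v_j\in N(U_j)$ with $i\ne j$, and replacing $e$ by $\{v_i\}\cup U_i$ and $\{v_j\}\cup U_j$ augments $M$ --- the two new edges are disjoint precisely because the $U_i$ were chosen disjoint. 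The second bound is the same argument with two disjoint sets $U_1,U_2$ (available since $m\le n-2$). No bipartite auxiliary graph or Hall-type argument is needed; if you replace your shared tail by disjoint tails, your outline collapses to this proof.
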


\begin{proof}
Assume a maximum matching $M$ of $H$ has size $|M|\le \min\{n-k+1, \sum_{i=1}^k a_i -1\}$. Since each class has at least $k-1$ vertices unmatched, we can find $k$ disjoint crossing $(k-1)$-sets $U_1, \dots, U_{k}$ such that $U_i$ contains exactly one unmatched vertex in $V_j$ for $j\ne i$. Each $U_i$ has at least $a_i$ neighbors and all of them lie entirely in $V(M)$. Since $\sum_{i=1}^k a_i>|M|$, there exist distinct indices $i\ne j$ such that $U_i$ and $U_j$ have neighbors on the same edge $e\in M$, say $v_i\in N(U_i)\cap e$ and $v_j\in N(U_j)\cap e$. Replacing $e$ by $\{v_i\}\cup U_i$ and $\{v_j\}\cup U_j$ gives a larger matching, a contradiction.
The second inequality can be proved similarly.
\end{proof}

The following construction, sometimes called a \emph{space barrier}, shows that the degree sum conditions in Theorem~\ref{thm:main} and Fact~\ref{thm:mat_num} are best possible. 
Let $H_0 = H_0(a_1,\dots, a_k)$ be a $k$-partite $k$-graph with $n$ vertices in each part $V_1,\dots, V_k$.
For each $i\in [k]$, fix a set $A_i\subseteq V_i$ of size $a_i$. Let $E(H_0)$ consist of all crossing $k$-sets $e$ such that $e\cap A_i\neq\emptyset$ for some $i\in [k]$. 
Suppose $\sum_{i=1}^k a_i \le n-1$.
Clearly both $\nu(H_0)$ and the partite degree sum of $H_0$ equal to $\sum_{i=1}^k a_i$ (so we cannot expect a matching larger than $\sum_{i=1}^k a_i$).

Given a set $V$, let $V_1\cup \cdots \cup V_k$ and  $A\cup B$ be two partitions of $V$.
For $i\in [k]$ we always write $A_i:=A\cap V_i$ and $B_i:=B\cap V_i$.
A set $S\subseteq V$ is \emph{even} (otherwise \emph{odd}) if it intersects $A$ in an even number of vertices. 
Let $E_{even}(A, B)$ (respectively, $E_{odd}(A, B)$) denote the family of all crossing $k$-subsets of $V$ that are even (respectively, odd).

To see that we cannot always expect a perfect matching when $\sum_{i=1}^k a_i\ge n$, consider the following example, sometimes called a \emph{divisibility barrier}.
Let $H_1$ be a $k$-partite $k$-graph with $n$ vertices in each of its parts $V_1,\dots, V_k$.
For $i\in [k]$, let $V_i=A_i\cup B_i$ such that $\sum_{i=1}^k|A_i|$ is odd and for each $i\in [k]$, $n/2-1\le |A_i|\le n/2+1$.
Let $E(H_1)=E_{even}(A, B)$.
So the partite degree sum of $H_1$ is at least $k(n/2-1)$.
However, $H_1$ does not contain a perfect matching because any matching in $H_1$ covers an even number of vertices in $\bigcup_{i=1}^k A_i$ but $|\bigcup_{i=1}^k A_i|$ is odd.

When proving Corollary \ref{cor} directly, the authors of \cite{LWY, Zang_thesis} closely followed the approach used by the first author \cite{Han14_mat} by separating two cases based on whether $H$ is close to $H_0$. 
In contrast, to prove Theorem~\ref{thm:main}, we have to consider \emph{three} cases separately: when $H$ is close to $H_0$, when $H$ is close to (a weaker form of) $H_1$, and when $H$ is far from both $H_0$ and $H_1$.

Now we define two extremal cases formally.
Let $H$ be a $k$-partite $k$-graph with each part of size $n$ and let $a_i:=\delta_{[k]\setminus\{i\}}(H)$ for all $i\in [k]$.
We call $H$ \emph{$\e$-S-extremal} if $\sum_{i=1}^k a_i\le (1+ \e) n$ and $V(H)$ contains an independent set $C$ such that $|C\cap V_i|\ge n-a_i-\e n$ for each $i\in [k]$.
We call $H$ \emph{$\e$-D-extremal} if there is a partition $A\cup B$ of $V(H)$ such that
\begin{enumerate}[(i)]
\item $(1/2-\e)n\le a_1, a_2, |A_1|, |A_2| \le (1/2+\e)n$, and $a_i\le \e n$ for $i\ge 3$,
\item $|E_{even}(A, B)\setminus E(H)|\le \e n^k$ or $|E_{odd}(A, B)\setminus E(H)|\le \e n^k$.
\end{enumerate}

Our proof of Theorem \ref{thm:main} consists of the following three theorems.

Throughout the paper, we write $\alpha \ll \beta \ll \gamma$ to mean that we can choose the positive constants
$\alpha, \beta, \gamma$ from right to left. More
precisely, there are increasing functions $f$ and $g$ such that, given
$\gamma$, whenever $\beta \leq f(\gamma)$ and $\alpha \leq g(\beta)$, the subsequent statement holds.
Hierarchies of other lengths are defined similarly.
Moreover, when we use variables of the reciprocal form in the hierarchy, we implicitly assume that the variables are integers.
Throughout this paper, we omit the assumption $k\ge 3$ in the hierarchies.

\begin{theorem}[Non-extremal case]\label{thm:nonext}
%For an integer $k\ge 3$ and $\r \ll\e\ll 1/k$, the following holds for sufficiently large $n$.
Let $1/n\ll \r \ll\e\ll 1/k$.
Suppose H is a $k$-partite $k$-graph with parts of size $n$ with $a_i:=\delta_{[k]\setminus\{i\}}(H)$ for $i\in [k]$ such that $(1-\e)n\ge a_1\ge a_2\ge \cdots \ge a_k$, $a_2\ge \e n$, and $\sum_{i=1}^k a_i \ge (1-\r/5)n$.
Then one of the following holds:
\begin{itemize}
\item[(i)] $H$ contains a matching of size at least $n-1$;
\item[(ii)] $H$ is $\r$-S-extremal;
\item[(iii)] $H$ is $2k^2\e$-D-extremal.
\end{itemize}
\end{theorem}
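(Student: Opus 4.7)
The plan is to prove the contrapositive: assuming $H$ is neither $\r$-S-extremal nor $2k^2\e$-D-extremal, I will construct a matching of size at least $n-1$. Fix constants $1/n \ll \mu \ll \r \ll \e$. The argument follows the absorbing framework of R\"odl--Ruci\'nski--Szemer\'edi, adapted by K\"uhn--Osthus to the partite setting and complicated here by the fact that only $a_1$ and $a_2$ are assumed linear in $n$. The proof has three phases: (A) build a small absorbing matching $M_A$; (B) find a near-perfect matching $M'$ on $H - V(M_A)$ leaving a balanced crossing set $L$ with $|L \cap V_i| \leq \mu n$; (C) use the absorbing property of $M_A$ on $L$.

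For phase (A), I define an \emph{absorber} for a crossing $k$-set $S = (s_1, \dots, s_k)$ to be an edge $T = \{u_1, \dots, u_k\} \in E(H)$ together with $k$ further pairwise disjoint edges $f_1, \dots, f_k \in E(H)$ satisfying $u_i \in f_i \cap V_i$ and $(f_i \setminus \{u_i\}) \cup \{s_i\} \in E(H)$; any matching containing $f_1, \dots, f_k$ and disjoint from $S \cup T$ can be rearranged to gain one edge and to cover $S$. Using $a_1, a_2 \geq \e n$ together with $\sum a_i \geq (1-\r/5)n$, I count absorbers by first choosing $T$ via the codegrees of pairs in $V_1 \cup V_2$ and then each $f_i$ via whichever of $a_1, a_2$ supplies a valid swap link for class $i$. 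If every $S$ admits polynomially many absorbers, a standard random-selection argument produces $M_A$ of size $O(\r n)$ absorbing any balanced crossing leftover of size at most $\mu n$ per class. If the count fails for some $S$, the deficiency localizes missing edges into a common forbidden region; a cleanup step packages this into an independent set $C$ with $|C \cap V_i| \geq n - a_i - \r n$, yielding $\r$-S-extremality.

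For phase (B), I start from the matching on $H - V(M_A)$ guaranteed by Fact~\ref{thm:mat_num} (the partite codegrees drop by only $O(\r n)$ after removing $V(M_A)$) and iteratively augment using the degree-sum hypothesis: each step replaces one edge by two via the codegrees of a suitable pair of crossing $(k-1)$-sets, in the spirit of the proof of Fact~\ref{thm:mat_num}. The augmentation runs until at most $\mu n$ vertices remain uncovered in each class and the leftover $L$ is balanced. If the augmentation stalls well before this, the obstruction forces $a_1, a_2 \in [(1/2-\e)n, (1/2+\e)n]$ and $a_i \leq \e n$ for $i \geq 3$ (the only codegree profile consistent with $\sum a_i \geq (1-\r/5)n$ and the failure pattern), together with a bipartition $A \cup B$ of $V(H)$ with $|A_1|, |A_2| \approx n/2$ for which either $|E_{even}(A,B) \setminus E(H)| \leq 2k^2\e n^k$ or $|E_{odd}(A,B) \setminus E(H)| \leq 2k^2\e n^k$, i.e.\ $H$ is $2k^2\e$-D-extremal. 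Phase (C) is then immediate: by the absorbing property, $H[V(M_A) \cup L]$ has a perfect matching $M_A^*$, and $M_A^* \cup M'$ is a matching of $H$ of size at least $n-1$.

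The principal obstacle is the asymmetric codegree assumption. Absorbers and augmenting structures can spend codegree only on $V_1$ and $V_2$, treating $V_3, \dots, V_k$ as passive, which substantially constrains the choice of swap coordinates $u_i$ in the absorbers and of the crossing $(k-1)$-sets used for augmentation. A further delicate point is routing each failure to the correct extremal case: S-extremality arises from absorber deficiency (an independent set of size $\approx n - \sum a_i$), whereas D-extremality arises from augmentation deficiency (a parity/divisibility barrier), and both manifest as structured missing-edge families. Keeping the error bounds within $\r$ and $2k^2\e$ respectively during the cleanup steps is where the bulk of the technical work will lie.
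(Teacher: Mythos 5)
Your overall architecture (absorb, then almost-perfect matching, then absorb the leftover) matches the paper, but the way you route the two failure modes to the two extremal outcomes is exactly backwards, and as stated both of your routing claims are false. Failure of the absorber count does \emph{not} localize into an independent set: take the divisibility barrier with $|A_1|=|A_2|=n/2$, $A_i=\emptyset$ for $i\ge 3$ and $E(H)=E_{even}(A,B)$. Then $a_1=a_2=n/2$, $a_i=0$ for $i\ge3$, $\sum a_i=n$, and $H$ has no large independent set whatsoever, yet for every \emph{odd} crossing $k$-set $S$ there are no absorbers of your type: any swap link $W$ for $s_i,u_i$ forces $u_i$ to lie on the same side ($A_i$ or $B_i$) as $s_i$, so $T=\{u_1,\dots,u_k\}$ inherits the parity of $S$ and is a non-edge. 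So absorber deficiency is a \emph{parity} obstruction, and the actual content of the paper at this point (Absorbing Lemma II) is that absorber failure forces D-extremality; proving this needs the lattice-based machinery (reachable-set partitions of each $V_i$, index vectors, the lattice $L_{\cP}^{\mu}(H)$, and the parity argument on even/odd index vectors), none of which appears in your proposal. Your one-line count ``each $f_i$ via whichever of $a_1,a_2$ supplies a valid swap link for class $i$'' cannot work for $i\ge 3$: a swap link for $s_i,u_i\in V_i$ requires a $(k-1)$-set $W$ with both $W\cup\{s_i\}$ and $W\cup\{u_i\}$ edges, i.e.\ codegree into $V_i$, and $a_i$ may be $0$ or $1$ there. (The paper in fact splits into two cases: if also $a_3\ge\e n$ it uses a simple RRS-type absorber needing three large codegrees, and only when $a_3<\e n$ does it invoke the lattice method.)

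Symmetrically, your Phase (B) claim that a stalled augmentation ``forces'' the bipartite parity structure is also false: the natural obstruction to an almost perfect matching is the space barrier, not a parity barrier. The paper's Lemma 3.1 shows precisely that if the maximum matching leaves more than $\a n$ vertices per class uncovered, one extracts a set $D$ with $|D|\ge\sum a_i-\r n$ meeting each matching edge at most once, and the failure to augment means $V(M')\setminus D$ is (essentially) independent with $|(V(M')\setminus D)\cap V_i|\ge n-a_i-2\r n$, i.e.\ S-extremality; a graph like $H_0$ with $\sum a_i=(1-\r/5)n$ stalls in exactly this way while being nowhere near D-extremal, so your asserted lemma ``stall $\Rightarrow$ D-extremal'' has a counterexample. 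In short, the two cleanup lemmas you would need are the reverse of the ones you state, and the genuinely hard one (absorber failure $\Rightarrow$ D-extremal, with the $2k^2\e$ bound) is missing rather than sketched; without it the proposal does not yield the theorem.
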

%\begin{restatable}[Non-extremal case]{theorem}{NEC}
%For an integer $k\ge 3$ and $\r \ll\e\ll 1/k$, the following holds for sufficiently large $n$.
%Let H be a $k$-partite $k$-graph with parts of size $n$ with $a_i:=\delta_{[k]\setminus\{i\}}(H)$ for $i\in [k]$ such that $(1-\e)n\ge a_1\ge a_2\ge \cdots \ge a_k$, $a_2\ge \e n$, and $\sum_{i\in [k]} a_i \ge (1-\r/5)n$.
%Then one of the following holds:
%\begin{itemize}
%\item[(i)] $H$ contains a matching of size at least $n-1$;
%\item[(ii)] $H$ is $\r$-S-extremal;
%\item[(iii)] $H$ is $2k^2\e$-D-extremal.
%\end{itemize}
%\end{restatable}

\begin{theorem}[Extremal case I]\label{thm:ext}
%For an integer $k\ge 3$ and $\r \ll \e\ll 1/k$, the following holds for sufficiently large $n$.
Let $1/n\ll \r \ll\e\ll 1/k$.
Let H be a $k$-partite $k$-graph with parts of size $n$ and $a_i:=\delta_{[k]\setminus\{i\}}(H)$ for $i\in [k]$ such that $(1-\e)n\ge a_1\ge a_2\ge \cdots \ge a_k$ and $a_2\ge \e n$.
Suppose $H$ is $\r$-S-extremal.
Then one of the following holds:
\begin{itemize}
\item[(i)] $H$ contains a matching of size at least $\min\{n-1, \sum_{i=1}^k a_i\}$;
\item[(ii)] $H$ is $3\e$-D-extremal.
\end{itemize}
\end{theorem}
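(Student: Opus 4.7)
My plan is to first strip away the easy sub-cases using Fact~\ref{thm:mat_num}, then drive a short augmentation argument using the independent-set structure from the $\r$-S-extremal hypothesis, with the only obstruction to completion being the global parity pattern that defines $3\e$-D-extremality.

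First, Fact~\ref{thm:mat_num} already delivers conclusion (i) whenever $a_1+a_2\ge n-1$ or $\sum_i a_i\le n-k+2$. So I may assume
\[
a_1+a_2\le n-2 \quad\text{and}\quad \sum_{i=1}^k a_i\ge n-k+3,
\]
and start with a matching $M_0$ of size at least $n-k+2$; the target $m:=\min\{n-1,\sum a_i\}$ then exceeds $|M_0|$ by at most $k-3$. Next I set up the structure: let $C=\bigcup_i C_i$ be the independent set from the $\r$-S-extremal hypothesis, with $|C_i|\ge n-a_i-\r n$, and set $D_i:=V_i\setminus C_i$. Because every crossing $(k-1)$-set lying in $\bigcup_{j\ne i}C_j$ has $\ge a_i$ neighbors, which must all avoid $C_i$, we obtain the two-sided bound $a_i\le|D_i|\le a_i+\r n$ for every $i$. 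In particular every edge of $H$ meets $D:=\bigcup_i D_i$, and since $a_i\ge\e n$ only for $i=1,2$, the only large $D_i$ are $D_1$ and $D_2$.

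The main step is augmentation by rotation. For each unmatched vertex $u$ I would attempt to swap out a single matched edge $e\in M_0$ for two new crossing edges formed from $(\{u\}\cup e)\setminus\{w\}$ together with $k-1$ unmatched vertices of $V(H)\setminus V(M_0)$, where $w$ becomes the new unmatched vertex --- thereby increasing $|M_0|$ by one. The codegree lower bounds $a_1,a_2\ge\e n$, combined with the abundance of $C$-vertices available as partners and the freedom to choose $e$ from among $\ge n-k+2$ matched edges, should make each such rotation succeed unless the resulting edges are forced to violate a specific parity condition with respect to some bipartition $A\cup B$ of $V(H)$. Iterating at most $k-3$ such rotations then delivers conclusion (i).

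The main obstacle, and where most of the work lies, is converting the failure of augmentation into the global parity structure required for $3\e$-D-extremality. I would define $A_i\subseteq V_i$ according to which ``side'' of the blocking reference edges each vertex's link tends to lie on, then use the codegrees $a_1,a_2\ge\e n$ to show that the stalling hypothesis propagates: at most $3\e n^k$ crossing $k$-sets of a single parity class can be missing from $E(H)$. The size constraints $|A_1|,|A_2|\in[(1/2-3\e)n,(1/2+3\e)n]$ and $a_i\le 3\e n$ for $i\ge 3$ should then fall out arithmetically from $\sum a_i\le(1+\r)n$ combined with $a_1+a_2\approx n$ forced by the balanced parity structure.
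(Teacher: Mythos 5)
Your proposal has a genuine gap, and the gap is precisely where the work lies. The rotation step is essentially a localized absorbing argument: swap one matched edge $e$ for two new edges using $(\{u\}\cup e)\setminus\{w\}$ and a crossing $(k-1)$-set of unmatched vertices. For that to succeed you need, for a typical pair $(u,e)$, both $(\{u\}\cup e)\setminus\{w\}$ and $\{w\}\cup U$ (with $U$ a crossing $(k-1)$-set of unmatched vertices) to be edges. But in the $\r$-S-extremal case the unmatched vertices live almost entirely in the independent set $C$, so $\{w\}\cup U$ is an edge only if $w$ lies in the small set $V\setminus C$, which has only about $a_i$ vertices per class. Meanwhile the absorbing-type count (cf.\ Lemma~\ref{lem:abs1}) needs three codegrees $a_1,a_2,a_3\ge\e n$; here you only have two. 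The whole reason the paper splits off the S-extremal case is that this kind of greedy/absorbing augmentation cannot be pushed through directly. You do not supply the missing estimate, and I do not see how the count would go.

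You also do not explain how the failure of rotation produces the $3\e$-D-extremal structure; the sentence ``define $A_i$ according to which side of the blocking reference edges each vertex's link tends to lie on'' is not an argument. In the paper the $A_i$ are defined entirely differently, as the vertices of $V_i\setminus C_i$ whose degree into $\prod_{j\ne i}C_j$ is at least $(1-\a)\prod_{j\ne i}|C_j|$, and the D-extremal case arises when one cannot find a few ``correcting'' edges that use two or three vertices of $A$ (Claim~\ref{claim:mat3}, Step 2). At that point the proof switches to arithmetic: $|A_1|<(1/2+\e)n$ and $a_3<2k\a n$ force $a_2\ge(1/2-2\e)n$, and then $|E_{odd}(A, B\cup C)\setminus E(H)|$ is bounded by the definition of the $A_i$'s. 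Finally, to actually produce the large matching in the non-D-extremal case, the paper applies Pikhurko's Ore-type theorem (Lemma~\ref{lem:Pik}) to the two bipartite-like subgraphs $H[A_i''\cup\bigcup_{\ell\ne i}C_\ell^i]$, $i=1,2$; your proposal never mentions this ingredient, and without it you have no mechanism to cover the bulk of the vertices of $C_1,\dots,C_k$.
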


\begin{theorem}[Extremal case II]\label{thm:ext2}
%For any integer $k\ge 3$ and $\e \ll 1/k$, the following holds for sufficiently large $n$.
Let $1/n \ll\e\ll 1/k$.
Suppose $H$ is a $k$-partite $k$-graph with parts of size $n$ and $a_i:=\delta_{[k]\setminus\{i\}}(H)$ for $i\in [k]$.
If $H$ is $\e$-D-extremal, then $H$ contains a matching of size at least $\min\{n-1, \sum_{i=1}^k a_i\}$.
\end{theorem}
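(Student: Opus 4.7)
The plan is to build a matching of size $m:=\min\{n-1,\sum_{i=1}^k a_i\}$ directly, using that $H$ is close to one of the ``parity hypergraphs'' $E_{even}(A,B)$ or $E_{odd}(A,B)$. By symmetry I treat the case $|E_{even}(A,B)\setminus E(H)|\le \e n^k$; the odd case is analogous. The D-extremal hypotheses give $a_1+a_2\ge (1-2\e)n$ and $a_i\le \e n$ for $i\ge 3$, so $(1-2\e)n\le \sum_{i=1}^k a_i\le (1+k\e)n$ and therefore $(1-2\e)n\le m\le n-1$.

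The key observation is that a matching consisting only of even edges covers an even number of $A$-vertices. I first pick a parity-correct \emph{$A$-profile} $(x_1,\dots,x_k)$, where $x_i$ denotes the intended number of $A$-vertices from $V_i$ to be covered by the matching, subject to $\max(0,m-|B_i|)\le x_i\le \min(m,|A_i|)$ and $\sum_i x_i\equiv 0\pmod 2$. Since $m\le n-1$ and $|A_1|,|A_2|,|B_1|,|B_2|$ are all approximately $n/2$, both $x_1$ and $x_2$ admit two admissible values, so a parity-correct profile always exists. With the profile fixed, I identify the at most $k\sqrt{\e}\, n$ \emph{bad} vertices per part---those lying in more than $\sqrt{\e}\, n^{k-1}$ non-edges of $E_{even}(A,B)$---and then choose target subsets $A'_i\subseteq A_i$ and $B'_i\subseteq B_i$ of sizes $x_i$ and $m-x_i$ that avoid bad vertices; this is possible because each $A_i$ or $B_i$ that is nonempty has at most $O(\sqrt{\e}\, n)$ bad elements and the required sizes leave room for them.

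I then seek a perfect matching on $\bigcup_i (A'_i\cup B'_i)$ using only edges of $H\cap E_{even}(A,B)$. Since every good vertex lies in at most $\sqrt{\e}\, n^{k-1}$ non-edges of $E_{even}(A,B)$, the relevant subhypergraph is nearly complete among even crossing $k$-sets, and a standard absorption-plus-greedy construction---with a small absorbing matching built on $V_1\cup V_2$, exploiting $a_1,a_2\ge (1/2-\e)n$ and the balance of $A_1,B_1,A_2,B_2$---produces a matching of size $m$. The main obstacle I expect is parity drift during the greedy completion, when forced choices alter the matching's $A$-intersection in some $V_i$ by one; I would absorb such drift by reserving in advance a flexible swap, either an even edge realising two distinct $\{0,1\}^k$-profiles or, if necessary, a single \emph{odd} edge of $H$. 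The D-extremal hypothesis restricts only missing even edges, so $H$ may freely contain many odd edges, and the codegrees on $V_1,V_2$ guarantee their existence in whichever configuration is needed to correct parity, after which the greedy completion terminates with the desired matching of size $m$.
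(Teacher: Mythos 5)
Your overall intuition (reduce to the parity hypergraph, fix a parity-correct distribution of $A$-vertices, finish in a nearly complete structure) matches the endgame of the paper (Lemma~\ref{lem:even_mat} and Theorem~\ref{thm:even_ext}), but the proposal breaks at the step ``choose target subsets $A_i'\subseteq A_i$, $B_i'\subseteq B_i$ \dots that avoid bad vertices.'' There is no room to avoid them. Since $H$ is $\e$-D-extremal we have $\sum_i a_i\ge a_1+a_2\ge (1-2\e)n$, so $m=\min\{n-1,\sum_i a_i\}$ forces at most $n-m\le \max\{1,2\e n\}$ vertices per part to be left uncovered, whereas the bound $|E_{even}(A,B)\setminus E(H)|\le \e n^k$ only limits the number of bad vertices to roughly $k\sqrt{\e}\,n$, and they may all sit in one part. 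Hence almost every bad vertex must be \emph{covered} by the matching, and near-completeness of the even subgraph tells you nothing about edges through these vertices; covering them requires genuine use of the codegree hypotheses of $H$ (this is exactly what Steps 2 and 3 of the paper's proof do, covering the atypical set $W$ by horizontal/diagonal edges supplied by $\delta_1(H)\ge(1/2-\e)n^{k-1}$, with careful bookkeeping of how many vertices of $A_1,B_1,A_2,B_2$ each such edge consumes so that the remainder can still be balanced).

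A second, independent failure is that ``even edges plus one reserved odd edge'' cannot reach size $m$ in general, because D-extremality permits $|A_1|=(1/2+\e)n$, $|A_2|=(1/2-\e)n$ and $A_i=\emptyset$ for $i\ge 3$. Then every even edge meets both of $A_1,A_2$ or neither, so any matching of even edges has size at most $\min(|A_1|,|A_2|)+\min(|B_1|,|B_2|)=(1-2\e)n$, and when $\sum_i a_i\ge n-1$ you need about $2\e n$ diagonal (odd) edges, not one. Moreover the \emph{number} of such diagonal edges must be coupled to the deficiency $n-\sum_i a_i$ so that the uncovered leftover does not exceed $\max\{1,n-\sum_i a_i\}$; this is the one place the exact codegree sum enters, and it is handled in the paper's Step 1 via Fact~\ref{thm:mat_num} applied to $H[A_1^0\cup B_2^0\cup V_3\cup\dots\cup V_k]$ --- your argument never uses $\sum_i a_i$ at all, yet the target $\min\{n-1,\sum_i a_i\}$ depends on it. Two smaller gaps in the same spirit: parity of $\sum_i x_i$ alone does not guarantee that the profile is realizable as a sum of $m$ even location vectors (e.g.\ $(x_1,x_2,x_3)=(3,1,0)$ with $m=3$, $k=3$ is even but not realizable), which is why the paper instead reads off the counts $m_{\bf v}$ from an explicit template matching of $E_{even}(A,B)$; and when some $A_i$ with $i\ge 3$ is nonempty but of size $o(n)$, a non-degree bound of $\sqrt\e\,n^{k-1}$ is not small relative to the corresponding location class, so the ``nearly complete'' finishing step fails there --- the paper removes these vertices first (the set $X$ in Step 3) precisely to meet hypothesis (ii) of Theorem~\ref{thm:even_ext}.
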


\begin{proof}[Proof of Theorem \ref{thm:main}]
When $\sum_{i=1}^k a_i \le n - k +2$ or $a_1\ge (1-\e)n$, Theorem \ref{thm:main} follows from Fact \ref{thm:mat_num} immediately.
When $\sum_{i=1}^k a_i \ge n - k +3$ and $a_1\le (1-\e)n$, it follows from Theorems~\ref{thm:nonext}--\ref{thm:ext2}.
\end{proof}

The rest of the paper is organized as follows. In Section 2 we introduce two absorbing lemmas that are needed for the proof of Theorem~\ref{thm:nonext}: Lemma~\ref{lem:abs1} is a simple $k$-partite version of \cite[Fact 2.3]{RRS09}; Lemma~\ref{lem:abs2} is derived from a more involved approach by considering the lattice generated by the edges of $H$. In Sections~3--5, we give the proofs of Theorems~\ref{thm:nonext}--\ref{thm:ext2}, respectively. Note that Lemma~\ref{lem:abs1}, Lemma~\ref{lem:alm_mat}, and a portion of Section 4 suffice for the proof of Corollary~\ref{cor} -- this was exactly the approach used in \cite{Han14_mat, LWY}. The rest of our proof was carried through with new ideas.

\medskip
\noindent\textbf{Notation}:
Given integers $k'\ge k\ge 1$, we write $[k]:=\{1,\dots, k\}$ and $[k, k']:=\{k, k+1,\dots, k'\}$.
Throughout this paper, we denote by $H$ a $k$-partite $k$-graph with the vertex partition $V(H)=V_1\cup \cdots \cup V_k$. 
A vertex set $S$ is called \emph{balanced} if it consists of an equal number of vertices from each part of $V(H)$.
Given a $k$-graph $H$ and a set $W\subseteq V(H)$, let $H[W]$ denote the subgraph of $H$ induced on $W$ and
$H\setminus W := H[V(H)\setminus W]$. 
% Throughout the paper, we write $\alpha \ll \beta \ll \gamma$ to mean that we can choose the positive constants
% $\alpha, \beta, \gamma$ from right to left. More
% precisely, there are increasing functions $f$ and $g$ such that, given
% $\gamma$, whenever $\beta \leq f(\gamma)$ and $\alpha \leq g(\beta)$, the subsequent statement holds.
% Hierarchies of other lengths are defined similarly.
% Moreover, when we use variables of the reciprocal form in the hierarchy, we implicitly assume that the variables are integers.

\section{Absorbing Techniques in $k$-Partite $k$-Graphs}

The main tool in the proof of Theorem~\ref{thm:nonext} is the absorbing method.
This technique was initiated  by R\"odl, Ruci\'nski and Szemer\'edi \cite{RRS06} and has proven to be a powerful tool for finding spanning structures in graphs and hypergraphs.
In this section, we prove the absorbing lemmas that will be used in the proof of Theorem \ref{thm:nonext}.
In fact, we present two different notions of absorbing sets and use them in two different cases.

Let $H$ be a $k$-partite $k$-graph.
Given a balanced $2k$-set $S$, an edge $e\in E(H)$ disjoint from $S$ is called \emph{$S$-absorbing} if there are two disjoint edges $e_1, e_2\subseteq S\cup \{e\}$ such that $|e_1\cap S|=k-1$, $|e_1\cap e|=1$, $|e_2\cap S|=2$, and $|e_2\cap e|=k-2$.
Note that $S$-absorbing works in the following way: assume that $M$ is a matching such that $S\cap V(M)=\emptyset$ and $M$ contains an $S$-absorbing edge $e$, then we can replace $e$ by $e_1$ and $e_2$ and get a matching larger than $M$.
Given a crossing $k$-set $S$, a set $T\subset V(H)\setminus S$ is called \emph{$S$-perfect-absorbing} if $T$ is balanced and both $H[T]$ and $H[S\cup T]$ contain perfect matchings.
These two definitions work very differently -- they are needed for the following two different absorbing lemmas.

Our first absorbing lemma is an analog of~\cite[Fact 2.3]{RRS09} for $k$-partite $k$-graphs. 

\begin{lemma}[Absorbing lemma I]\label{lem:abs1}
Let $1/n \ll \a \ll \e \ll 1/k$. 
Suppose H is a k-partite k-graph with parts of size $n$ such that $\delta_{[k]\setminus\{i\}}(H) \ge \e n$ for $i\in [3]$, then there exists a matching $M'$ in $H$ of size at most $\sqrt\a n$ such that for every balanced $2k$-set $S$ of $H$, the number of $S$-absorbing edges in $M'$ is at least $\a n$.
\end{lemma}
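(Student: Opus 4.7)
My plan follows the standard absorbing template of R\"odl--Ruci\'nski--Szemer\'edi: first lower-bound the number $|\mathcal{L}(S)|$ of $S$-absorbing edges for every balanced $2k$-set $S$, and then obtain $M'$ by a random selection followed by removal of intersecting pairs. For the counting, fix $S$ with $S\cap V_i=\{s_i^1,s_i^2\}$ for $i\in[k]$; I claim that any crossing $k$-tuple $e=\{u_1,\dots,u_k\}$ disjoint from $S$ such that
\[
e_1:=\{u_1,s_2^1,s_3^1,\dots,s_k^1\}\in E(H)\quad\text{and}\quad e_2:=\{s_1^1,s_2^2,u_3,u_4,\dots,u_k\}\in E(H)
\]
is already $S$-absorbing, since $e_1,e_2$ are then disjoint subsets of $S\cup e$ with $|e_1\cap S|=k-1$, $|e_1\cap e|=1$, $|e_2\cap S|=2$, $|e_2\cap e|=k-2$. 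To count such $e$, I would choose the coordinates in the order $u_4,\dots,u_k,\,u_3,\,u_1,\,u_2$: first pick $u_4,\dots,u_k\in V_i\setminus S$ freely, giving $\ge(n-2)^{k-3}$ options; then pick $u_3\in V_3\setminus S$ so that $e_2\in E(H)$, with at least $\e n-2$ choices by $\delta_{[k]\setminus\{3\}}(H)\ge\e n$; then pick $u_1\in V_1\setminus S$ so that $e_1\in E(H)$, using $\delta_{[k]\setminus\{1\}}(H)\ge\e n$; and finally pick $u_2\in V_2\setminus S$ so that $e\in E(H)$, using $\delta_{[k]\setminus\{2\}}(H)\ge\e n$. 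Multiplying gives $|\mathcal{L}(S)|\ge(\e/2)^3 n^k=:cn^k$ for large $n$, and uses exactly the three codegree hypotheses of the lemma, one each for $e_1$, $e_2$, and $e$.

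Given this lower bound, I would include each edge of $H$ in a random family $F$ independently with probability $p=C_0\a/n^{k-1}$ for a suitable constant $C_0=C_0(c,k)$. Then $\mathbb{E}|F|=O(\a n)$; the expected number of pairs of intersecting edges in $F$ is at most $p^2\cdot kn\cdot n^{2(k-1)}=O(k\a^2 n)=o(\a n)$, since $\a\ll 1/k$; and for every balanced $2k$-set $S$, $\mathbb{E}|\mathcal{L}(S)\cap F|\ge pcn^k=\Omega(\a n)$. A Chernoff bound then gives that $|\mathcal{L}(S)\cap F|<2\a n$ with probability at most $e^{-\Omega(\a n)}$ for each $S$, and union-bounding over the $\le n^{2k}$ balanced $2k$-sets still leaves vanishing failure probability (using $1/n\ll\a$). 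Combined with Markov's inequality applied to $|F|$ and to the intersecting pair count, a realization exists with $|F|\le\sqrt{\a}n/2$, at most $\a n/4$ intersecting pairs, and $|\mathcal{L}(S)\cap F|\ge 2\a n$ for every $S$, where all constants can be arranged via the hierarchy $\a\ll\e\ll 1/k$. Deleting one edge from each intersecting pair of $F$ then yields a matching $M'$ of size $\le\sqrt{\a}n$ with $|\mathcal{L}(S)\cap M'|\ge 2\a n-\a n/2\ge\a n$ for every $S$, as required.

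The only real subtlety is the counting step: one must select an absorption structure whose three underlying edge conditions each correspond to one of the three available codegree hypotheses. The specific choice above works because the edge conditions on $e_2$, $e_1$, and $e$ involve $(k-1)$-sets missing $V_3$, $V_1$, and $V_2$, respectively; once this observation is made, the remainder of the proof is routine.
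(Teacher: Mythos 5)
Your proposal is correct and follows essentially the same approach as the paper: you lower-bound the number of $S$-absorbing edges by $\Omega(\e^3 n^k)$ by selecting the three non-free coordinates so that each of $e_1$, $e_2$, and $e$ invokes exactly one of the three hypotheses $\delta_{[k]\setminus\{i\}}(H)\ge\e n$, $i\in[3]$ (the paper assigns $e_1\to V_2$, $e_2\to V_1$, $e\to V_3$, while you assign $e_1\to V_1$, $e_2\to V_3$, $e\to V_2$, a cosmetic permutation), and then extract $M'$ by random sparsification with Chernoff plus Markov, which the paper packages as its Proposition~\ref{prop:prob}.
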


Our second absorbing lemma deals with the case when only two partite minimum codegrees are large and their sum is not significantly smaller than $n$.

\begin{lemma}[Absorbing Lemma II]\label{lem:abs2}
Let $1/n \ll \a \ll \e \ll 1/t \ll 1/k$.
Suppose $H$ is a $k$-partite $k$-graph with parts of size $n$ and $a_i:=\delta_{[k]\setminus\{i\}}(H)$ for each $i\in [k]$.
If $\sum_{i=1}^k a_i \ge (1-\e)n$, $a_1\ge a_2 \ge \e n$ and $a_j<\e n$ for $j\ge 3$, then one of the following holds.
\begin{itemize}
\item[(i)]  $H$ is $2k^2\e$-D-extremal.
\item[(ii)]
There exists a family $\F'$ of disjoint $t k$-sets such that $|\F'|\le \sqrt\a n$, each $F\in \F'$ spans a matching of size $t$ and for every crossing $k$-set $S$ of $H$, the number of $S$-perfect-absorbing sets in $\F'$ is at least $\a n$.
\end{itemize}
\end{lemma}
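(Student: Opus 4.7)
The plan is to adapt the lattice-based absorbing method of Keevash--Mycroft and Han--Schacht to the $k$-partite setting. The crux is a \emph{one-absorber supersaturation} statement: unless conclusion (i) holds, every crossing $k$-set $S$ admits $\Omega(n^{tk})$ balanced $tk$-sets $T$ that are $S$-perfect-absorbing. Given this, conclusion (ii) follows by a standard probabilistic construction---include each candidate $tk$-set in a random family $\F_0$ with probability $\Theta(n^{1-tk})$, apply Chernoff to the per-$S$ absorber count and to $|\F_0|$, use Janson's inequality (or a second-moment bound) to control the number of intersecting pairs, and delete one member of each such pair to enforce disjointness.

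For the one-absorber step, I would first introduce, for each $i\in[k]$, a reachability relation on $V_i$: call $u,v\in V_i$ reachable if there are $\Omega(n^{sk-1})$ balanced ``connectors'' $W$ of size $sk-1$ (with one vertex missing from $V_i$) for which both $H[\{u\}\cup W]$ and $H[\{v\}\cup W]$ admit perfect matchings. A transitivity argument exploiting $a_1,a_2\ge\e n$ makes reachability (essentially) an equivalence relation, yielding a partition $\cP_i$ of $V_i$ with constantly many parts. Because $a_j<\e n$ for $j\ge 3$, each such $V_j$ is a single reachability class, up to a negligible outlier set. Next, form the integer lattice $\LP\subseteq\mathbb{Z}^{\sum_i|\cP_i|}$ generated by the index vectors of edge-types realized by at least $\mu n^k$ edges. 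The dichotomy splits on whether $\LP$ contains every ``transferral'' difference $\bfv_X-\bfv_Y$ for $X,Y\in\cP_i$: in the affirmative case, robust edges can be concatenated with carefully chosen multiplicities to convert any $S$ into a balanced set spanning a matching while leaving behind a backup matching on the remainder, delivering the required $\Omega(n^{tk})$ absorbers.

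The main obstacle is the negative case, where some transferral is missing. By the preceding paragraph only $\cP_1$ or $\cP_2$ can host the obstruction, so each of these partitions must contain at least two parts. The degree-sum inequality $\sum_i a_i\ge(1-\e)n$ combined with $a_j<\e n$ for $j\ge 3$ then forces $\cP_1$ and $\cP_2$ to have exactly two parts $A_i,B_i$ with sizes within $\e n$ of $n/2$, and $a_1,a_2\in[(1/2-\e)n,(1/2+\e)n]$. Extending this partition to $V_j$ for $j\ge 3$ by assigning each outlier vertex to the side prescribed by the parity constraint coming from the missing transferral, and then translating the sub-robustness of ``wrong-parity'' edge-types back into an upper bound on non-edges, one verifies that either $|E_{even}(A,B)\setminus E(H)|$ or $|E_{odd}(A,B)\setminus E(H)|$ is at most $2k^2\e\,n^k$, which is precisely the $2k^2\e$-D-extremal structure. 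The chief challenge throughout is carefully tracking the accumulated error terms across reachability, lattice, and parity translations so as to land at the $2k^2\e$ threshold, and ensuring the trivial reachability of $V_3,\dots,V_k$ does not obstruct the extraction of the global parity partition.
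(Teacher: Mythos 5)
Your proposal follows the same route as the paper: a lattice-based absorbing method in which one either merges the reachability classes of each $V_i$ via robust transferrals (yielding plentiful $S$-perfect-absorbing sets and hence conclusion (ii) after a probabilistic selection step as in Proposition~\ref{prop:prob}), or one exhibits an unmerged pair whose missing transferral forces a parity constraint on the edge index vectors, which is then converted into the $D$-extremal structure of conclusion (i). The paper's Lemma~\ref{claim:abs} plays the role of your ``affirmative case'': if either $V_1$ or $V_2$ is $(\beta, i_0)$-closed then absorbers exist outright, so one may assume neither is; Proposition~\ref{prop:Nv}, Lemma~\ref{lem:P}, and Lemma~\ref{lem:trans} then produce the partition and the merging process.

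There is, however, a genuine gap in your reduction of the negative case. You assert that ``because $a_j < \e n$ for $j\ge 3$, each such $V_j$ is a single reachability class, up to a negligible outlier set,'' and you then deduce that only $\cP_1$ or $\cP_2$ can host a missing transferral. Neither claim is justified. The reachability structure of $V_j$ for $j\ge 3$ is governed by the \emph{vertex} degrees in those parts, and by Fact~\ref{degree}(i) these are at least $a_1 n^{k-2}\approx n^{k-1}/2$; they are controlled by the \emph{large} values $a_1, a_2$, not by the small codegree $a_j$. Consequently $V_j$ for $j\ge 3$ may very well split into two substantial closed classes $A_j, B_j$, each of size at least $\e n$, and the transferral $\bfu_{A_j}-\bfu_{B_j}$ may be absent from the lattice; this is entirely compatible with $a_j<\e n$, since in a near-divisibility-barrier the codegree of a parity-constrained $(k-1)$-set is approximately $\min\{|A_j|,|B_j|\}$. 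The paper's proof allows $\tilde I$ to contain indices $\ge 3$, carries all such pairs through the observations $(\dagger)$, $(\ddagger)$, and builds the extremal partition as $A=\bigcup_{i\in\tilde I}A_i$; the $D$-extremal definition deliberately places no constraint on $|A_i|$ for $i\ge 3$. By contrast your final ``assign each outlier vertex to the side prescribed by the parity constraint'' step only treats small exceptional sets, not full-sized reachability classes of $V_3,\dots,V_k$, so it does not cover all the cases that arise. You would need to replace the single-class claim by the two-alternative dichotomy of Proposition~\ref{prop:Nv} and incorporate any unmerged $A_j,B_j$ with $j\ge 3$ into the parity partition, exactly as the paper does.
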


We first prove the following proposition, which is a standard application of Chernoff's bound.
We will apply it in both proofs of Lemmas~\ref{lem:abs1} and~\ref{lem:abs2} for randomly sampling the absorbing sets.

\begin{proposition}\label{prop:prob}
Let $1/n \ll \lambda, 1/k, 1/i_0$.
Let $V$ be a vertex set with $k$ parts each of size $n$, and let $\F_1, \dots, \F_t$ be families of balanced $i_0 k$-sets on $V$ such that $|\F_i|\ge \lambda n^{i_0k}$ for $i\in [t]$ and $t\le n^{2k}$. 
Then there exists a family $\F'\subseteq \bigcup_{i\in [t]} \F_i$ of disjoint balanced $i_0 k$-sets on $V$ such that $|\F'|\le \lambda n/(4i_0 k)$ and $|\F_i\cap \F'|\ge \lambda^2 n/(32i_0 k)$ for each $i\in [t]$.
\end{proposition}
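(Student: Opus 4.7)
The plan is to construct $\F'$ by the standard randomized sparsification argument: first pick a random subfamily $\F''\subseteq \bigcup_{i\in [t]}\F_i$ by including each set independently with a suitable small probability $p$, then clean up the overlaps to obtain a family of pairwise disjoint sets. The target sampling probability is $p=\lambda/(16\,i_0 k\,n^{i_0 k-1})$, chosen so that the expectation $\mathbb{E}[|\F''\cap \F_i|]=p|\F_i|$ is comfortably larger than the required lower bound $\lambda^2 n/(32\,i_0 k)$, while $\mathbb{E}[|\F''|]\le p\,n^{i_0 k}=\lambda n/(16\,i_0 k)$ already beats the desired upper bound.

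First I would apply Chernoff's inequality (in the two-sided form) to each of the $t+1$ random variables $|\F''|$ and $|\F''\cap \F_i|$ for $i\in [t]$. Since each of their expectations is linear in $n$ while $t\le n^{2k}$, the failure probability $e^{-\Omega(n)}$ from Chernoff easily beats the union bound over $t+1$ events. Thus with high probability
\begin{equation*}
|\F''|\le \tfrac{\lambda n}{8\,i_0 k}\quad\text{and}\quad |\F''\cap \F_i|\ge \tfrac{\lambda^2 n}{16\,i_0 k}\text{ for every }i\in [t].
\end{equation*}

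Second, I would control the number $X$ of unordered intersecting pairs $\{F,F'\}\subseteq \F''$. The number of ordered pairs $(F,F')$ of balanced $i_0 k$-sets in $V$ that share at least one vertex is at most $n^{i_0 k}\cdot i_0 k\cdot n^{i_0 k-1}$ (choose $F$, a shared vertex, and the rest of $F'$), so $\mathbb{E}[X]\le p^2\cdot i_0 k\cdot n^{2i_0 k-1}\le \lambda^2 n/(256\,i_0 k)$. Markov's inequality then gives $X\le \lambda^2 n/(128\,i_0 k)$ with probability at least $1/2$. Picking an outcome where all these events occur, I would obtain $\F'$ by deleting one set from each intersecting pair; the resulting family is pairwise disjoint and each count $|\F'\cap \F_i|$ drops by at most $X$, which is still well within the slack between $\lambda^2 n/(16\,i_0 k)$ and the required $\lambda^2 n/(32\,i_0 k)$.

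The only real subtlety is the book-keeping in choosing $p$ so that all three inequalities (upper bound on $|\F'|$, lower bound on $|\F'\cap \F_i|$ after deletions, and small overlap via Markov) simultaneously go through with the stated absolute constants $1/(4\,i_0 k)$, $1/(32\,i_0 k)$; everything else is a routine application of Chernoff together with a deletion argument, and the hierarchy $1/n\ll \lambda,1/k,1/i_0$ provides the slack needed both for the Chernoff concentration and for the union bound over $t\le n^{2k}$ families.
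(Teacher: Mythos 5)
Your proposal is correct and follows essentially the same route as the paper's proof: sample each balanced $i_0k$-set independently with probability $\Theta\bigl(\lambda/(i_0 k\, n^{i_0k-1})\bigr)$, apply Chernoff with a union bound over the $t\le n^{2k}$ families, bound the expected number of intersecting pairs and use Markov, then delete one set from each intersecting pair. The only (harmless) imprecision is asserting $|\F''\cap\F_i|\ge \lambda^2 n/(16 i_0 k)$, i.e.\ the lower bound on the expectation itself, as a high-probability event -- Chernoff only gives a $(1-\delta)$ fraction of the expectation -- but the slack you leave before the final target $\lambda^2 n/(32 i_0 k)$ easily absorbs this.
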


\begin{proof}
We build $\F'$ by standard probabilistic arguments. 
%Let $V$ be a vertex set with $k$ parts each of size $n$.
Choose a collection $\F$ of balanced $i_0 k$-sets in $H$ by selecting each balanced $i_0 k$-set on $V$ independently and randomly with probability $p= {\e}/(2n^{i_0k-1})$, where $\e=\lambda/(4i_0 k)$.  
Since $t\le n^{2k}$, Chernoff's bound implies that, with probability $1-o(1)$, the family $\F$ satisfies the following properties:
\[
|\F|\le 2p \binom{n}{i_0}^k \le 2n^{i_0k}p={\e} n \quad \text{and} \quad |\F_i\cap \F|\ge \frac p2\cdot \lambda n^{i_0k} =\frac14\lambda{\e} n \text{ for any }i\in [t].
\]
Furthermore, the expected number of intersecting pairs of members in $\F$ is at most
\[
p^2 n^{i_0 k} \cdot i_0 k \cdot n^{i_0 k-1} =\e^2 i_0 k n/4.
\]
By Markov's inequality, $\F$ contains at most ${\e}^2 i_0 k n/2$ intersecting pairs of $i_0 k$-sets with probability at least $1/2$.

Let $\F'\subset \F$ be the subfamily obtained by deleting one $i_0 k$-set from each intersecting pair and removing all $i_0 k$-sets that do not belong to any $\F_i$, $i\in [t]$.
Therefore, $|\F'|\le |\F|\le \e n$ and for each $i\in [t]$, we have
\[
|\F_i\cap \F'|\ge |\F_i\cap \F| - \frac12{\e}^2 i_0 kn \ge \frac14\lambda {\e} n -\frac12{\e}^2 i_0 k n = \frac{\lambda^2}{32 i_0 k}n
\] 
and we are done.
\end{proof}

Now we prove our first absorbing lemma. 

%\begin{lemma}[Absorbing lemma I]\label{lem:abs1}
%Given $\a \ll \e \ll 1/k$, the following holds for sufficiently large $n$. Let H be a k-partite k-graph with parts of size $n$ such that $\delta_{[k]\setminus\{i\}}(H) \ge \e n$ for $i\in [3]$, then there exists a matching $M'$ in $H$ of size at most $\sqrt\a n$ such that for every balanced $2k$-set $S$ of $H$, the number of $S$-absorbing edges in $M'$ is at least $\a n$.
%\end{lemma}

\begin{proof}[Proof of Lemma~\ref{lem:abs1}]\label{lem:abs}
%Our goal is to show that for each balanced $2k$-set $S$, there are many $S$-absorbing edges. 
%Given $0<\a \ll \e$ and sufficiently large $n$, let $H$ be a $k$-partite $k$-graph with parts of size $n$ such that $\delta_{[k]\setminus\{i\}}(H) \ge \e n$ for $i\in [3]$.
We claim that for every balanced $2k$-set $S$, there are at least ${\e}^3n^k/2$ $S$-absorbing edges. Since there are at most $n^{2k}$ balanced $2k$-sets, the existence of the desired matching follows from Proposition~\ref{prop:prob}.

Indeed, assume that $\{w,v\}:=S\cap V_3$ and $u \in S\cap V_{2}$.
We obtain $S$-absorbing edges $e=\{v_{1}, v_{2}, \dots, v_{k}\}$ as follows.
First, for each $j\in [4, k]$, we choose arbitrary $v_i\in V_j\setminus S$ -- there are $n-2$ choices for each $v_{j}$.
Having selected $\{v_{4}, v_{5}, \dots, v_{k}\}$, 
we select a neighbor of $\{u, v, v_{4}, \dots, v_{k}\}$ as $v_{1}$. 
Next, we choose a neighbor of $S'$ as $v_2$, where $S'$ is an arbitrary crossing $(k-1)$-subset of $S\setminus V_2$ that
contains $w$. Finally, we choose a neighbor of $\{v_{1}, v_{2}, v_{4}, \dots, v_{k} \}$ as $v_{3}$. There are at least $\e n-2$ choices for $v_{j}$ for $j = 1, 2, 3$. Hence, there are at least
\[
(n-2)^{k-3}(\e n-2)^3\ge \frac12 {\e}^3 n^k > \sqrt{32k\a} n^k
\]
$S$-absorbing edges, since $n$ is sufficiently large and $\a \ll \e$.
Then we get the absorbing matching $M'$ by applying Proposition \ref{prop:prob} with $\lambda = \sqrt{32 k\a}$ and $i_0=1$.
\end{proof}

The proof of Lemma~\ref{lem:abs2} is more involved than that of Lemma~\ref{lem:abs1} -- we need to apply a \emph{lattice-based absorbing method}, a variant of the absorbing method developed recently by the first author~\cite{Han14_poly}. 
Roughly speaking, the method provides a vertex partition $\cP$ of $H$ (Lemma~\ref{lem:P}) which refines the original $k$-partition so that we can work on the vectors of $\{0, 1\}^{|\cP|}$ that represent the edges of $H$.
Using the information obtained from these vectors, we will show that if Lemma~\ref{lem:abs2} (ii) does not hold, then $H$ is close to a divisibility barrier based on $\cP$.
The rest of this section is devoted to the proof of Lemma~\ref{lem:abs2}, for which we need the following notation and auxiliary results.

The following concepts are introduced by Lo and Markstr\"om~\cite{LM1}.
Let $H$ be a $k$-partite $k$-graph with $n$ vertices in each part.
Given $\b>0$, $i \in \mathbb{N}, j\in [k]$ and two vertices $u,v\in V_j$, we say that $u, v$ are \emph{$(\b ,i)$-reachable in $H$} if and only if there are at least $\b n^{ik-1}$ $(ik-1)$-sets $W$ such that both $H[\{u\} \cup W]$ and $H[\{v\} \cup W]$ contain perfect matchings.
In this case $W$ is called a \emph{reachable set} for $u, v$.
A set $X\subseteq V_j$ is \emph{$(\b,i)$-closed in $H$} if all $u,v\in X$ are $(\b ,i)$-reachable in $H$. Denote by $\tilde{N}_{\b, i}(v)$ the set of vertices that are $(\b ,i)$-reachable to $v$ in $H$.
Clearly, since $H$ is $k$-partite, for any $j\in [k]$ and $v\in V_j$, $\tilde{N}_{\b, i}(v)\subseteq V_j$.

We need the following simple fact on $k$-partite $k$-graphs.

\begin{fact}\label{degree}
Let H be a k-partite k-graph with parts of size $n$. Let $a_1:=\delta_{[k]\setminus\{1\}}(H)$.
\begin{itemize}
\item[(i)] For any $i\in [2,k]$ and $v\in V_i$, we have $\deg(v)\ge a_1 n^{k-2}$.
\item[(ii)] If $a_1\ge (1/3+\r)n$, then for any $i\in [2, k]$, any set of three vertices $u, v, w\in V_i$ contains a pair of vertices which are $(\r, 1)$-reachable.
\end{itemize}
\end{fact}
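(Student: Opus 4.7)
My plan is to prove both parts by elementary counting; no absorbing or lattice machinery is needed, since Fact \ref{degree} is simply packaging together the definition of $a_1$ and pairwise common neighborhoods.

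For (i), I would fix $v\in V_i$ with $i\in[2,k]$ and observe that an edge through $v$ is determined by picking one vertex from each $V_j$ with $j\in [k]\setminus\{i\}$. I would enumerate such edges by first choosing a crossing $(k-2)$-set $T\subseteq \prod_{j\in [2,k]\setminus\{i\}} V_j$; there are exactly $n^{k-2}$ such choices. For each $T$, the set $T\cup\{v\}$ is a crossing $(k-1)$-subset of $\prod_{j\in [k]\setminus\{1\}} V_j$, so by definition of $a_1=\delta_{[k]\setminus\{1\}}(H)$ it has at least $a_1$ neighbors in $V_1$, each producing a distinct edge through $v$. This gives $\deg(v)\ge a_1 n^{k-2}$.

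For (ii), I would translate $(\gamma,1)$-reachability into pairwise intersection of link neighborhoods. For each $x\in V_i$, let
\[
N(x):=\{W : W\text{ is a crossing }(k-1)\text{-set in }\prod_{j\ne i}V_j,\ W\cup\{x\}\in E(H)\}.
\]
Because $H$ is $k$-partite, a $(k-1)$-set $W$ making both $H[\{u\}\cup W]$ and $H[\{v\}\cup W]$ perfectly matchable is exactly a common element of $N(u)\cap N(v)$. Hence $u,v$ fail to be $(\gamma,1)$-reachable precisely when $|N(u)\cap N(v)|<\gamma n^{k-1}$. Part (i) gives $|N(x)|=\deg(x)\ge a_1 n^{k-2}\ge (1/3+\gamma)n^{k-1}$ for each $x\in\{u,v,w\}$.

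Assume for contradiction that no pair among $u,v,w$ is $(\gamma,1)$-reachable, so each of the three pairwise intersections has size $<\gamma n^{k-1}$. Then by inclusion-exclusion (dropping the nonnegative triple-intersection term),
\[
|N(u)\cup N(v)\cup N(w)|>3(1/3+\gamma)n^{k-1}-3\gamma n^{k-1}=n^{k-1}.
\]
But $N(u)\cup N(v)\cup N(w)$ is contained in the set of crossing $(k-1)$-subsets of $\prod_{j\ne i}V_j$, which has exactly $n^{k-1}$ elements, a contradiction. There is no real obstacle here; the only thing to be careful about is confirming the equivalence between $(\gamma,1)$-reachability and common-neighborhood size in the $k$-partite setting, which is immediate since a $k$-set in a $k$-partite $k$-graph has a perfect matching iff it is itself an edge.
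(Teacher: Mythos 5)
Your proof is correct and follows essentially the same approach as the paper's: part (i) counts edges through $v$ by choosing a $(k-2)$-set in $\prod_{j\ne 1,i}V_j$ and invoking the codegree, and part (ii) uses inclusion-exclusion on the three neighborhoods against the bound $n^{k-1}$, with your version phrased as a contradiction rather than directly concluding $\sum|N(\cdot)\cap N(\cdot)|\ge 3\rho n^{k-1}$. The equivalence you flag between $(\rho,1)$-reachability and common neighborhood size (a $k$-vertex set in a $k$-partite $k$-graph spans a perfect matching iff it is itself an edge) is exactly the identification the paper uses implicitly.
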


\begin{proof} 
To see (i), note that we can obtain an edge containing $v$ by first choosing a $(k-2)$-set $S\in \Pi_{j\ne 1, i} V_j$, and then choosing a neighbor of $\{v\} \cup S$. 
To see (ii), by (i) and $a_1\ge (1/3+\r)n$, we have $|N(u)|, |N(v)|, |N(w)|\ge (1/3+\r)n^{k-1}$.
Also note that $|N(u)\cup N(v)\cup N(w)|\le n^{k-1}$, then by the inclusion-exclusion principle, we have
\[
n^{k-1} \ge |N(u)| + |N(v)| + |N(w)|- |N(u)\cap N(v)| - |N(u)\cap N(w)| - |N(v)\cap N(w)|.
\]
So we get $|N(u)\cap N(v)| + |N(u)\cap N(w)| + |N(v)\cap N(w)| \ge 3\r n^{k-1}$.
Without loss of generality, assume that $|N(u)\cap N(v)|\ge \r n^{k-1}$.
This implies that $u$ and $v$ are $(\r, 1)$-reachable. 
\end{proof}

%Given a $k$-partite $k$-graph $H$, the partition lemma (Lemma~\ref{lem:P}) applies to a part $V_i$ only if every vertex of $V_i$ is reachable to at least $\e n$ vertices in $V_i$.
The following proposition reflects the property of $|\tilde{N}_{\e, 1}(v)|$.

\begin{proposition}\label{prop:Nv}
Suppose $1/n \ll \e \ll 1/k$ and let $H$ be a $k$-partite $k$-graph with $n$ vertices in each part such that $\delta_{[k]\setminus \{1\}}(H), \delta_{[k]\setminus \{2\}}(H) \ge (1/2-\e) n$.
Then for any $j=1,2$ and $v\in V_j$, $|\tilde{N}_{\e/3, 1}(v)| \ge (1/2 - 2\e) n$.
Moreover, for each $j\in [3,k]$, either $|\tilde{N}_{\e/3, 1}(v)| \ge \e n$ holds for all vertices $v\in V_j$, or there exists a set $V_j'\subseteq V_j$ of size at most $\e n+1$ such that $V_j\setminus V_j'$ is $(\e/3, 1)$-closed in $H$.
\end{proposition}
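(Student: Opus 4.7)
My plan is to handle the two assertions separately. A useful preliminary observation is that the codegree hypothesis propagates to a uniform vertex-degree bound: every $v \in V(H)$ satisfies $\deg(v) \ge (1/2-\e)n^{k-1}$. Indeed, for $v\in V_1$ (resp.\ $v\in V_j$ with $j\ge 3$) I expand $\deg(v) = \sum_T \deg(\{v\}\cup T)$ over the appropriate $(k-2)$-sets $T$ and invoke $\delta_{[k]\setminus\{2\}}(H) \ge (1/2-\e)n$ (resp.\ $\delta_{[k]\setminus\{1\}}(H)$); for $v \in V_2$ the roles of $1$ and $2$ are swapped.

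For the first assertion, by symmetry I may take $v\in V_1$ and double count pairs $(W, u)$ with $W\in N(v)$, $u\in V_1\setminus\{v\}$, and $W\in N(u)$. Grouping by $W$ and using $\deg(W) \ge \delta_{[k]\setminus\{1\}}(H) \ge (1/2-\e)n$ yields
\[
\sum_{u\in V_1\setminus\{v\}} |N(u)\cap N(v)| \;=\; \sum_{W\in N(v)} (\deg(W) - 1) \;\ge\; |N(v)|\bigl((1/2-\e)n - 1\bigr).
\]
Grouping by $u$ and splitting according to whether $u\in \tilde{N}_{\e/3, 1}(v)$ gives the upper bound $|\tilde{N}_{\e/3, 1}(v)| \cdot |N(v)| + (n - 1 - |\tilde{N}_{\e/3, 1}(v)|)(\e/3) n^{k-1}$. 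Dividing by $|N(v)| \ge (1/2-\e)n^{k-1}$, the error term becomes at most $(2\e/3)(1-2\e)^{-1}(n-1)$, which is bounded by $\e n$ for $\e$ small; solving for $|\tilde{N}_{\e/3, 1}(v)|$ then yields $|\tilde{N}_{\e/3, 1}(v)| \ge (1/2-2\e)n$ once $n$ is large.

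For the second assertion, fix $j\in [3, k]$ and assume some $v_0\in V_j$ has $|\tilde{N}_{\e/3, 1}(v_0)| < \e n$ (else the first alternative holds). Set $V_j' := \tilde{N}_{\e/3, 1}(v_0) \cup \{v_0\}$, so $|V_j'| \le \e n + 1$; I claim any distinct $u_1, u_2 \in V_j \setminus V_j'$ are $(\e/3, 1)$-reachable. By construction $|N(u_i) \cap N(v_0)| < (\e/3)n^{k-1}$, so $|N(u_i) \setminus N(v_0)| > (1/2 - 4\e/3) n^{k-1}$. Both sets sit inside $N(v_0)^{c}$, which has size at most $(1/2+\e) n^{k-1}$ by the degree bound, so inclusion-exclusion inside $N(v_0)^{c}$ gives
\[
|N(u_1) \cap N(u_2)| \;\ge\; 2(1/2 - 4\e/3)n^{k-1} - (1/2+\e) n^{k-1} \;=\; (1/2 - 11\e/3) n^{k-1} \;\ge\; (\e/3) n^{k-1}
\]
for $\e$ small enough, establishing closedness of $V_j\setminus V_j'$.

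The main subtlety is the tightness in the first assertion: the gap between the codegree lower bound $(1/2-\e)n$ and the target $(1/2-2\e)n$ is only $\e n$, so the reachability threshold $(\e/3)n^{k-1}$ must remain well below $|N(v)|$ after division, forcing the ratio $(\e/3)/(1/2-\e)$ to sit below $\e$; this is why $\e/3$ (and not some larger constant such as $\e/2$) appears in the statement. The second assertion is considerably more robust since the inclusion-exclusion slack is of order $1/2 - O(\e)$ rather than $O(\e)$.
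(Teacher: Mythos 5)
Your argument is correct, and the first assertion is handled by the same double counting/averaging step used in the paper: compare $\sum_{W\in N(v)}(\deg(W)-1)$ against the split over $u\in\tilde{N}_{\e/3,1}(v)$ versus $u\notin\tilde{N}_{\e/3,1}(v)$, then divide through by $|N(v)|\ge(1/2-\e)n^{k-1}$. For the second assertion the paper instead invokes Fact~\ref{degree}(ii), a three-vertex inclusion--exclusion formulated under the weaker hypothesis $a_1\ge(1/3+\r)n$: given $u,u'\in V_j\setminus V_j'$ it concludes that some pair among $\{u,u',v_0\}$ is $(\e/3,1)$-reachable and then rules out the two pairs involving $v_0$. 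You instead run a direct two-set inclusion--exclusion for $N(u_1)\setminus N(v_0)$ and $N(u_2)\setminus N(v_0)$ inside $N(v_0)^{c}$, exploiting the full codegree hypothesis $\delta_{[k]\setminus\{1\}}(H),\delta_{[k]\setminus\{2\}}(H)\ge(1/2-\e)n$ to get $|N(u_1)\cap N(u_2)|\ge(1/2-11\e/3)n^{k-1}$. This is self-contained and yields a much stronger intersection than the $(\e/3)n^{k-1}$ threshold requires, so it slightly overuses the hypothesis, while the paper's detour through the $(1/3+\r)n$-level lemma is tuned to be reusable in settings with weaker degree conditions. Both are instances of the same elementary technique, and your version is a valid, arguably cleaner, substitute at this particular point.
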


\begin{proof}
Fix a vertex $v\in V_j$ for some $j=1,2$, note that for any other vertex $u\in V_j$, $u\in \tilde{N}_{\e/3, 1}(v)$ if and only if $|N(u)\cap N(v)|\ge \e n^{k-1}/3$.
By double counting, we have
\[
\sum_{S\in N(v)} (\deg(S) -1) < |\tilde{N}_{\e/3, 1}(v)|\cdot |N(v)|+n\cdot \e {n}^{k-1}/3.
\]
Note that $ \sum_{S\in N(v)} (\deg(S)-1) \ge ((1/2-\e) n-1) |N(v)|$. Moreover, by Fact~\ref{degree} (i), $|N(v)|\ge (1/2-\e)n^{k-1}> 2n^{k-1}/5$. Putting these together, we conclude that  
\[
|\tilde{N}_{\e/3, 1}(v)|> \left(\frac12-\e \right) n -1- \frac{\e n^k/3}{|N(v)|}\ge \left(\frac12-\e \right) n - \e n = \left(\frac12 - 2\e \right) n.
\]

Now assume $j\in [3,k]$ and assume that $|\tilde{N}_{\e/3, 1}(v)| < \e n$ for some $v\in V_j$.
Let $V_j':=\{v\}\cup \tilde{N}_{\e/3, 1}(v)$. Thus $|V_j'|\le \e n+1$.
For any $u, u'\in V_j\setminus V_j'$, since $u\notin \tilde{N}_{\e/3, 1}(v)$ and $u'\notin \tilde{N}_{\e/3, 1}(v)$, by Fact~\ref{degree} (ii), we conclude that $u$ and $u'$ are $(\e/3, 1)$-reachable.
This implies that $V_j\setminus V_j'$ is $(\e/3, 1)$-closed.
\end{proof}

We use the following lemma from~\cite{HT} to find a partition of each part of $H$.
%A less general form appeared in~\cite{Han14_poly}.

\begin{lemma}\cite[Lemma 6.3]{HT}\label{lem:P}
Let $1/m \ll \beta \ll\r \ll 1/c, \delta', 1/k$.
Suppose that $H$ is an $m$-vertex $k$-graph, and a subset $S\subseteq V(H)$ satisfies that $|\tilde{N}_{\r, 1}(v)| \ge \delta' m$ for any $v\in S$ and every set of $c+1$ vertices in $S$ contains at least two vertices that are $(\r, 1)$-reachable.
Then we can find a partition $\cP_0$ of $S$ into $W_1,\dots, W_d$ with $d\le \min\{\lfloor 1/\delta' \rfloor, c\}$ such that for any $i\in [d]$, $|W_i|\ge (\delta' - \r) m$ and $W_i$ is $(\beta, 2^{c-1})$-closed in $H$.
\end{lemma}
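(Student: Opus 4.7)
The plan is to combine a composition lemma for reachability with an iterative closure construction, using the independence-number hypothesis to bound both the number of parts and the reachability diameter within each part.

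The key ingredient is a composition lemma: if $u, v$ are $(\r_1, i_1)$-reachable and $v, w$ are $(\r_2, i_2)$-reachable, then $u, w$ are $(\r', i_1+i_2)$-reachable for some $\r'$ depending on $\r_1, \r_2, i_1, i_2, k$. The proof is a direct counting argument: for each pair of reachable sets $W_1$ for $(u, v)$ and $W_2$ for $(v, w)$ with $W_1, W_2, \{u, v, w\}$ pairwise disjoint (disjointness costs only an $O((i_1+i_2)k/n)$ factor), the union $W_1 \cup \{v\} \cup W_2$ is a reachable set for $(u, w)$ of size $(i_1+i_2)k - 1$. Chaining, any two vertices joined by a length-$\ell$ path in the $(\r, 1)$-reachability graph on $S$ become $(\r_\ell, \ell)$-reachable; a padding step that adjoins extra vertices supporting a perfect matching (of which there are many, given the edge density implicit in $\delta' > 0$) then lifts reachable sets to any larger target size.

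The partition is built greedily. Initialize $S_1 := S$. At round $i$, pick any $v_i \in S_i$ and form $W_i$ by iteratively adjoining to $\{v_i\}$ all unassigned vertices of $S_i$ that are $(\r, 1)$-reachable to the current set; then set $S_{i+1} := S_i \setminus W_i$ and repeat until empty. Each new representative $v_i$ is not $(\r, 1)$-reachable to any prior $v_j$ (otherwise $v_i$ would already belong to $W_j$), so $\{v_1, \ldots, v_d\}$ is an independent set in the $(\r, 1)$-reachability graph on $S$, and the hypothesis forces $d \le c$. The first stage of closure alone captures at least $|\tilde{N}_{\r, 1}(v_i) \cap S_i|$ vertices; a double-counting argument bounds the depletion of $\tilde{N}_{\r, 1}(v_i)$ by earlier parts at $O(\r m)$, yielding $|W_i| \ge (\delta' - \r) m$, which in turn gives $d \le \lfloor 1/\delta' \rfloor$ by disjointness. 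For the closedness claim, one shows that the distance from $v_i$ to any vertex of $W_i$ in the $(\r, 1)$-reachability graph is bounded in terms of $c$: taking every other vertex along such a path produces an independent set in that graph, forced to have size at most $c$, so the distance is $O(c)$. Composition along such a path (then padding to reach exactly the target size $2^{c-1}k - 1$) yields $(\b, 2^{c-1})$-reachability between any two vertices of $W_i$.

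The main obstacle is managing parameter degradation: each composition naively loses a factor involving $1/n$, and chaining $O(c)$ compositions could wipe out $\r$ entirely unless the disjointness losses are made uniform in $k, c$ and the cumulative loss is absorbed by the hierarchy $1/m \ll \b \ll \r \ll 1/c, 1/\delta', 1/k$. A secondary subtlety is the padding step, which requires locating perfect matchings on specified small vertex sets using the edge density implicit in $\delta' > 0$; a short greedy argument, choosing one edge at a time from the many edges witnessed by the reachability neighborhoods, suffices.
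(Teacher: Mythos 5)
There is a genuine gap, and it is the central ingredient of your argument: the ``composition lemma'' through a \emph{single} fixed intermediate vertex is false with these parameters. If $W_1$ is a reachable set for $(u,v)$ and $W_2$ one for $(v,w)$, then every set $W_1\cup\{v\}\cup W_2$ you produce contains the fixed vertex $v$, so this construction yields at most $O\bigl(m^{(i_1+i_2)k-2}\bigr)$ reachable sets for $(u,w)$ --- a factor of $m$, not a constant, short of the required $\rho' m^{(i_1+i_2)k-1}$. This loss cannot be absorbed by the hierarchy, and in fact the statement itself fails: take $k=2$, $H$ the disjoint union of two cliques $A\ni u$ and $B\ni w$ of size about $m/2$, plus one vertex $v$ adjacent to everything. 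Then $u,v$ and $v,w$ are $(\tfrac12-o(1),1)$-reachable, but a parity argument on the two clique components shows every reachable set for $(u,w)$ must contain $v$, so $u,w$ are not $(\rho',i)$-reachable for any constants $\rho'>0$ and $i$. Hence your claim that vertices joined by a length-$\ell$ path in the $(\r,1)$-reachability graph are $(\rho_\ell,\ell)$-reachable is false, and with it the $(\beta,2^{c-1})$-closedness of the greedily grown parts collapses (note this example even satisfies the lemma's hypotheses with $c=2$, $\delta'\approx 1/2$, and $u,w$ correctly land in different parts --- but your chaining argument would wrongly merge them, or at least wrongly certify reachability along the path $u\,v\,w$).

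The paper gives no proof of this lemma (it is quoted from [HT, Lemma~6.3]), and the proof there avoids exactly this trap: one only composes when there are \emph{linearly many} intermediate vertices, i.e.\ one merges two vertices or two tentative classes only when $|\tilde{N}(u)\cap\tilde{N}(v)|\ge \Omega(m)$ (equivalently, when two candidate parts overlap in $\Omega(m)$ vertices); summing the construction over the $\Omega(m)$ choices of the middle vertex restores the missing factor of $m$, and each merge doubles the reachability index, which is precisely why the conclusion reads $(\beta,2^{c-1})$-closed after at most $c-1$ merges rather than the linear-in-$c$ index your path-chaining would give --- the exponent $2^{c-1}$ is itself a signal that single-vertex transitivity is not available. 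A secondary, lesser issue: your size bound $|W_i|\ge(\delta'-\r)m$ rests on the unproved assertion that earlier parts deplete $\tilde{N}_{\r,1}(v_i)$ by only $O(\r m)$; in a greedy scheme an earlier part may well swallow most of a later representative's reachability neighborhood, so this too needs the merge-by-large-overlap mechanism rather than a double count.
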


The following useful proposition was proved in~\cite{LM1}.

\begin{proposition}\cite[Proposition 2.1]{LM1}\label{prop21}
% For $\e, \beta>0$ and integers $i\ge 1, k\ge 2$, there exists $\beta_0>0$ and an integer $n_0$ satisfying the following. Suppose $H$ is a $k$-graph of order $n\ge n_0$ and there exists a vertex $x\in V(H)$ with $|\tilde{N}_{\beta, i}(x)|\ge \e n$. Then for all $0<\beta'\le \beta_0$, $\tilde{N}_{\beta,i}(x)\subseteq \tilde{N}_{\beta',i+1}(x)$.
Let $i\ge 1$, $k\ge 2$ and $1/n \ll \beta \ll\e, \beta, 1/i, 1/k$. 
Suppose $H$ is a $k$-graph of order $n\ge n_0$ and there exists a vertex $x\in V(H)$ with $|\tilde{N}_{\beta, i}(x)|\ge \e n$. Then for all $0<\beta'\le \beta_0$, $\tilde{N}_{\beta,i}(x)\subseteq \tilde{N}_{\beta',i+1}(x)$.
\end{proposition}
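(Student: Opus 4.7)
The natural approach is to construct reachable $((i+1)k - 1)$-sets for $(x, y)$ at level $i + 1$ by adjoining a fresh edge of $H$ to existing reachable $(ik - 1)$-sets at level $i$. Fix $y \in \tilde{N}_{\beta, i}(x)$, and consider any $(ik - 1)$-set $W$ reachable for $(x, y)$ together with any edge $e \in E(H)$ disjoint from $W \cup \{x, y\}$. Then $W' := W \cup e$ has size $(i+1)k - 1$, and extending the perfect matchings on $\{x\} \cup W$ and $\{y\} \cup W$ by the single edge $e$ produces perfect matchings on $\{x\} \cup W'$ and $\{y\} \cup W'$, so $W'$ is reachable for $(x, y)$ at level $i + 1$.

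Next I count the pairs $(W, e)$. By hypothesis there are at least $\beta n^{ik-1}$ choices of $W$, and for each, at least $|E(H)| - (ik+1) n^{k-1}$ admissible edges $e$. Since each resulting $W'$ decomposes as $W \cup e$ in at most $\binom{(i+1)k - 1}{k}$ ways, the number of distinct reachable sets for $(x, y)$ at level $i + 1$ is at least
\[
\frac{\beta\, n^{ik-1}\,\bigl(|E(H)| - (ik+1)\, n^{k-1}\bigr)}{\binom{(i+1)k - 1}{k}}.
\]
Once $|E(H)| = \Theta(n^k)$ is established, this is $\Omega(n^{(i+1)k-1})$, and then choosing $\beta_0$ to be a suitable small constant depending only on $\e, \beta, i, k$ ensures that any $\beta' \le \beta_0$ works.

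The main obstacle is to obtain the missing lower bound $|E(H)| \gg n^{k-1}$ from the sole hypothesis $|\tilde{N}_{\beta, i}(x)| \ge \e n$. I would argue as follows: every $y' \in \tilde{N}_{\beta, i}(x)$ has at least $\beta n^{ik-1}$ reachable $(ik-1)$-sets $W_{y'}$, and within each the perfect matching of $\{y'\} \cup W_{y'}$ contains an edge through $y'$, exhibiting a $(k-1)$-subset of $W_{y'}$ lying in $N(y')$. Since each fixed $S \in N(y')$ lies in at most $n^{(i-1)k}$ many $(ik-1)$-sets, double counting gives $\deg(y') = |N(y')| \ge \beta n^{k-1}$. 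Summing over the at least $\e n$ such $y'$ and noting that each edge of $H$ is counted at most $k$ times yields $|E(H)| \ge (\e \beta / k)\, n^k$, which for $n$ large dominates the $(ik+1) n^{k-1}$ correction by a wide margin. The remaining verifications are routine.
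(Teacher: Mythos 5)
Your proof is correct and takes essentially the standard approach found in the cited reference: extend a reachable $(ik-1)$-set $W$ for $(x,y)$ by a fresh edge $e$ disjoint from $W\cup\{x,y\}$ to obtain a reachable $((i+1)k-1)$-set, then count with overcounting factor $\binom{(i+1)k-1}{k}$. The one non-trivial point—that $|E(H)|=\Omega(n^k)$ must be extracted from the sole hypothesis $|\tilde N_{\beta,i}(x)|\ge\e n$, since the proposition assumes no degree condition on $H$—is handled correctly by your double-count giving $\deg(y')\ge\beta n^{k-1}$ for every $y'\in\tilde N_{\beta,i}(x)$, and the resulting choice of $\beta_0$ respects the hierarchy.
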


Let $H$ be a $k$-partite $k$-graph with parts of size $n$.
Suppose $\cP=\{W_0, W_1, \dots, W_d\}$ is a partition  of $V(H)$ for some integer $d\ge k$ that refines the original $k$-partition of $H$.
In later applications, $W_0$ will be so small that we only need to consider the edges not intersecting $W_0$.
The following concepts were introduced by Keevash and Mycroft~\cite{KM1}. The \emph{index vector} of a subset $S\subseteq V(H)$ with respect to $\cP$ is the vector 
\[
 \bfi_{\cP}(S) := \left( |S\cap W_1|, \dots, |S\cap W_d| \right) \in \mathbb{Z}^d.
\]
Given an index vector $\bfv$, we denote by $\bfv|_{W_i}$ its value at the coordinate that corresponds to $W_i$.
For $\mu>0$, define $I_{\cP}^\mu(H)$ to be the set of all $\bfv \in \mathbb{Z}^d$ such that $H$ contains at least $\mu n^k$ edges with index vector $\bfv$; let $L_{\cP}^{\mu}(H)$ denote the lattice in $\mathbb{Z}^d$ generated by $I_{\cP}^\mu(H)$.
For $i\in [d]$, let $\bfu_{W_i}\in \mathbb{Z}^d$ be the \emph{unit vector} such that $\bfu_{W_i}|_{W_i}=1$ and $\bfu_{W_i}|_{W_j}=0$ for $j\neq i$.

\begin{lemma}\cite[Lemma 3.4]{Han15_mat}\label{lem:trans}
Suppose $\beta' \ll\mu, \beta\ll \e\ll 1/i_0, 1/k$ and $1/i' \ll 1/i_0, 1/k$, then the following holds for sufficiently large $m$.
Suppose $H$ is an $m$-vertex $k$-graph and $\cP=\{W_0, W_1, \dots, W_d\}$ is a partition with $d\le 2k$ such that $|W_0|\le \sqrt\e m$, $|W_i|\ge \e^2 m$ and $W_i$ is $(\beta, i_0)$-closed in $H$ for $i\ge 1$.
If $\bfu_{W_i} - \bfu_{W_j}\in L_{\cP}^{\mu}(H)$, then $W_i\cup W_j$ is $(\beta', i')$-closed in $H$.
\end{lemma}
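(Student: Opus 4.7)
The plan is to split into two cases. If $u, v$ lie in the same part $W_i$ (or both in $W_j$), then $u, v$ are $(\beta, i_0)$-reachable by hypothesis, and iterated application of Proposition~\ref{prop21} boosts this to $(\beta', i')$-reachability. The substantive case is the cross-part one, with $u \in W_i$ and $v \in W_j$, where the lattice hypothesis must enter.

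For the cross-part case, I first construct a \emph{bridge} between $W_i$ and $W_j$, meaning a pair $(w_1, w_2) \in W_i \times W_j$ together with a common reachable set. Write $\bfu_{W_i} - \bfu_{W_j} = \sum_\ell c_\ell \bfv_\ell$ with $\bfv_\ell \in I_\cP^\mu(H)$. Since $I_\cP^\mu(H) \subseteq \{0,\dots,k\}^d$ with $d \le 2k$, a finite-lattice argument (e.g.\ via Hermite normal form applied to a $\mathbb{Z}$-basis of $L_\cP^\mu(H)$) provides such a decomposition with $s := \sum_\ell |c_\ell|$ bounded purely in terms of $k$. Greedily pick vertex-disjoint matchings $M^+$ and $M^-$ realizing the positive and negative parts of the combination, respectively, using that each $\bfv_\ell$ is carried by at least $\mu m^k$ edges; comparing coordinate sums forces $|M^+| = |M^-|$. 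By construction, there exist $w_1 \in V(M^+) \cap W_i$ and $w_2 \in V(M^-) \cap W_j$ such that $V(M^+) \setminus \{w_1\}$ and $V(M^-) \setminus \{w_2\}$ have identical $\cP$-index; fix a part-preserving bijection $\phi$ between them. For each $u^*$ in the domain of $\phi$, the $(\beta, i_0)$-closedness of the part containing $u^*$ provides a reachable set $R_{u^*}$ for $(u^*, \phi(u^*))$; choose these sequentially so as to be pairwise disjoint and disjoint from $V(M^+) \cup V(M^-)$, which is feasible because each part has size at least $\e^2 m$. Then
\[
Y := \bigl(V(M^+) \setminus \{w_1\}\bigr) \cup \bigl(V(M^-) \setminus \{w_2\}\bigr) \cup \bigcup_{u^*} R_{u^*}
\]
is a common reachable set for $w_1, w_2$: a perfect matching of $\{w_1\} \cup Y$ is assembled from $M^+$ together with the perfect matchings of $\{\phi(u^*)\} \cup R_{u^*}$, and $\{w_2\} \cup Y$ is handled symmetrically using $M^-$ and the $\{u^*\} \cup R_{u^*}$.

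Finally, using $(\beta, i_0)$-closedness of $W_i$ and $W_j$ individually, pick reachable sets $R^{(1)}$ for $(u, w_1)$ and $R^{(2)}$ for $(v, w_2)$, disjoint from $Y$ and from each other. Set $W := R^{(1)} \cup \{w_1\} \cup Y \cup \{w_2\} \cup R^{(2)}$; then $\{u\} \cup W$ has a perfect matching obtained by combining those of $\{u\} \cup R^{(1)}$, $\{w_1\} \cup Y$, and $\{w_2\} \cup R^{(2)}$, and $\{v\} \cup W$ is handled by the mirror assembly with the two ends of the bridge swapped. This exhibits one valid reachable set of the required size $i'k - 1$; counting the number of choices independently at each stage and discarding overlaps as in the proof of Proposition~\ref{prop:prob} yields the required $\beta' m^{i'k-1}$ such sets. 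The main obstacle is controlling $s$ uniformly so that $i'$ stays within the hierarchy $1/i' \ll 1/i_0, 1/k$; this is where the boundedness $I_\cP^\mu(H) \subseteq \{0,\dots,k\}^d$ and the lattice-basis argument are essential.
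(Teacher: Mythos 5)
This is essentially the argument behind the cited Lemma~3.4 of \cite{Han15_mat} (the present paper gives no proof of its own): write $\bfu_{W_i}-\bfu_{W_j}$ as an integer combination of vectors of $I_{\cP}^{\mu}(H)$ with coefficient sum bounded in terms of $k$, realize it by vertex-disjoint edge sets $M^+,M^-$, pair the leftover vertices part by part and bridge each pair by a reachable set inside its $(\beta,i_0)$-closed part, then assemble the perfect matchings and count — your bridge-plus-endpoints assembly and the final counting are the standard and correct route. Two points to tighten: the assembled sets have size $i''k-1$ where $i''$ depends on $i_0$ and the coefficient sum $s$, not the prescribed $i'k-1$, so you must still boost every pair (cross-part as well as same-part) to the common parameter $i'$, e.g.\ by iterating Proposition~\ref{prop21} or by appending disjoint edges carrying a fixed vector of $I_{\cP}^{\mu}(H)$; and your claim $|M^+|=|M^-|$ (and the part-preserving bijection $\phi$ covering all of $V(M^\pm)$ minus one vertex) tacitly assumes every generator has coordinate sum $k$, i.e.\ comes from edges avoiding $W_0$ — which is the intended reading of $I_{\cP}^{\mu}(H)$, since vertices of $W_0$ picked up by generating edges could not be bridged ($W_0$ is not closed).
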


The following lemma shows that if $V_1$ is closed and $\delta_{[k]\setminus \{1\}}(H)\ge \e n$ then Lemma~\ref{lem:abs2} (ii) holds.
\begin{lemma}\label{claim:abs}
%Let $\beta, \e>0$ and $i_0\in \mathbb{N}$.
Let $1/n\ll \a \ll\beta, \e, 1/i_0, 1/k$.
Suppose $H$ is a $k$-partite $k$-graph with parts each of size $n$ and $\delta_{[k]\setminus \{1\}}(H)\ge \e n$.
If $V_1$ is $(\b, i_0)$-closed, then there exists a family of disjoint $i_0 k$-sets $\F'$ in $H$ such that $|\F'|\le \sqrt\a n$ and each $F\in \F'$ spans a matching of size $i_0$ and for every crossing $k$-set $S$ of $H$, the number of $S$-perfect-absorbing sets in $\F'$ is at least $\a n$.
\end{lemma}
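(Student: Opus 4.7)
The plan is to lower-bound, for every crossing $k$-set $S$, the number of $S$-perfect-absorbing sets by a quantity of order $n^{i_0k}$, and then to extract the desired family $\F'$ from these via Proposition~\ref{prop:prob}. The construction combines the two hypotheses: the codegree bound $\delta_{[k]\setminus\{1\}}(H)\ge \e n$ supplies an edge of the form $\{v_1,s_2,\dots,s_k\}$ for some fresh $v_1\in V_1$, and the $(\beta,i_0)$-closedness of $V_1$ then links $s_1$ to this $v_1$ through a reachable $(i_0k-1)$-set.

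Fix a crossing $k$-set $S=\{s_1,\dots,s_k\}$ with $s_j\in V_j$. By the codegree assumption there are at least $\e n-1$ vertices $v_1\in V_1\setminus S$ with $\{v_1,s_2,\dots,s_k\}\in E(H)$. For any such $v_1$, the $(\beta,i_0)$-closedness of $V_1$ gives at least $\beta n^{i_0k-1}$ reachable sets $W$ of size $i_0k-1$ for which both $H[\{s_1\}\cup W]$ and $H[\{v_1\}\cup W]$ contain perfect matchings; discarding the $W$ that meet $\{s_2,\dots,s_k\}$ (at most $(k-1)(i_0k-1)n^{i_0k-2}$ of them) leaves at least $\beta n^{i_0k-1}/2$ valid choices. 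Put $T:=W\cup\{v_1\}$. Then $T$ is disjoint from $S$, balanced with exactly $i_0$ vertices in each $V_j$, spans a perfect matching via $\{v_1\}\cup W$, and $H[S\cup T]$ has a perfect matching obtained by combining the edge $\{v_1,s_2,\dots,s_k\}$ with a perfect matching of $\{s_1\}\cup W$. Each such $T$ arises from at most $i_0$ pairs $(v_1,W)$ (one per vertex of $T\cap V_1$), so the family $\F_S$ of $S$-perfect-absorbing sets satisfies $|\F_S|\ge \beta\e n^{i_0k}/(4i_0)$.

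There are at most $n^k\le n^{2k}$ crossing $k$-sets, so I apply Proposition~\ref{prop:prob} to $\{\F_S\}_S$ with $\lambda:=\sqrt{32\, i_0 k\, \alpha}$. Since $\alpha\ll\beta,\e,1/i_0,1/k$, this $\lambda$ satisfies $\lambda\le \beta\e/(4i_0)$, hence $|\F_S|\ge \lambda n^{i_0k}$ for every $S$ and the proposition applies. Its conclusion yields a family $\F'$ of disjoint balanced $i_0k$-sets with
\[
|\F'|\le \frac{\lambda n}{4i_0k}=\sqrt{\frac{2\alpha}{i_0k}}\,n\le\sqrt{\alpha}\,n \quad\text{and}\quad |\F_S\cap\F'|\ge \frac{\lambda^2 n}{32i_0k}=\alpha n
\]
for every crossing $k$-set $S$; since every element of $\F'$ lies in some $\F_S$, it automatically spans a matching of size $i_0$. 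The only point requiring care is the bookkeeping of the overcount between pairs $(v_1,W)$ and absorbing sets $T$, which costs just a factor of $i_0$ and is absorbed by the hierarchy $\alpha\ll\beta,\e,1/i_0,1/k$; the rest is a direct combination of reachability and the standard random-selection step.
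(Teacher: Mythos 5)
Your proof is correct and follows essentially the same route as the paper's: find a fresh vertex $v_1\in V_1$ completing $S\setminus\{s_1\}$ to an edge via the codegree bound, attach a reachable $(i_0k-1)$-set for $s_1,v_1$ using the $(\beta,i_0)$-closedness of $V_1$, and then extract $\F'$ by Proposition~\ref{prop:prob} with $\lambda=\sqrt{32 i_0 k\alpha}$. The only difference is that you explicitly account for the factor-$i_0$ overcount between pairs $(v_1,W)$ and absorbing sets, a refinement the paper silently absorbs into its constant slack.
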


\begin{proof}
%Without loss of generality, assume that $V_1$ is $(\b, i_0)$-closed, i.e., any $u,v\in V_1$ are $(\b, i_0)$-reachable.

Fix a crossing $k$-set $S=\{v_{1}, v_{2}, \dots, v_{k}\}$ such that $v_{j}\in V_{j}$, we claim there are at least $\sqrt{32i_0 k\a} n^{i_0 k}$ $S$-perfect-absorbing $i_0k$-sets.
First of all, we find $v_1'\in V_1\setminus\{v_1\}$ such that $\{v'_{1}, v_{2}, \dots, v_{k}\}$ spans an edge.
Since $\deg(S\setminus\{v_1\})\ge \e n$, there are at least $\e n-1$ choices for $v_1'$.
Since $V_1$ is $(\b, i_0)$-closed, there are at least $\b n^{i_0 k-1}$ reachable $(i_0 k-1)$-sets $W$ for $v_1$ and $v_1'$.
Among them, at least $\b n^{i_0k-1} - (k-1)n^{k-2}\ge \b n^{i_0k-1}/2$ reachable $(i_0k-1)$-sets $W$ are disjoint from $S$.
To see that $\{v_1'\}\cup W$ is an $S$-perfect-absorbing set, note that $H[\{v_1'\}\cup W]$ has a perfect matching by the definition of $W$, and $H[\{v_1'\}\cup W\cup S]$ has a perfect matching because $\{v_1'\}\cup (S\setminus \{v_1\})$ spans an edge and $H[\{v_1\}\cup W]$ has a perfect matching by the definition of $W$.
In total, we have at least $\e \b n^{i_0k}/4 > \sqrt{32i_0 k\a} n^{i_0 k}$ $S$-perfect-absorbing sets.
So we get the family of absorbing sets $\F'$ by applying Proposition \ref{prop:prob} with $\lambda=\sqrt{32 i_0 k\a}$.
Note that each $F\in \F'$ is an $S$-perfect-absorbing set for some crossing $k$-set $S$ and thus $F$ spans a matching of size $i_0$.
\end{proof}

\begin{proof}[Proof of Lemma~\ref{lem:abs2}]
%Let $0< \mu, \beta\ll \e \ll 1/k$.
We apply Lemma~\ref{lem:trans} inductively $k$ times, at the $j$th time with $i = t_{j-1}$ and $i' = t_j$, where $t_0=2$.
Let $t=t_{k}$.
%Then let $t:=t_k$ and reselect\footnote{Note that by the definition of the hierarchy, we can select the constants to be sufficiently small. Here we reselect all constants such that they are smaller than $1/t$} the constants such that
Pick further constants such that
\[
1/n\ll \a \ll \beta_k \ll \beta_{k-1}\ll \cdots \ll \beta_1 \ll \mu, \beta \ll \e \ll 1/t_k \ll 1/k.
\]
%In addition, we may assume that $\e$ is much smaller than $t:=t_k$ so that we can apply Proposition~\ref{prop21}.

Let $H$ be a $k$-partite $k$-graph as given by Lemma~\ref{lem:abs2}.
Suppose that (ii) does not hold.
In particular, by Lemma~\ref{claim:abs}, we may assume that neither $V_1$ nor $V_2$ is $(\beta_k, t_k)$-closed in $H$.
By Fact \ref{degree} (i), we have $\deg(v) \ge a_1 n^{k-2}$ for any $v\notin V_1$, and $\deg(v)\ge a_2 n^{k-2}$ for any $v\in V_1$.

First note that if $a_1\ge (1/2+\e)n$, then for any $u, v\in V_2$, we have $|N(u)\cap N(v)|\ge 2\e n^{k-1}$, and thus $V_2$ is $(2\e, 1)$-closed. By Proposition~\ref{prop21}, $V_2$ is $(\beta_k, t_k)$-closed, a contradiction.

So we may assume that $a_1 < (1/2+\e)n$.
Thus, we have
\[
a_2\ge \sum_{i=1}^k a_i - a_1 - (k-2) \e n \ge (1-\e)n - (1/2+\e)n - (k-2)\e n = (1/2-k\e)n,
\]
i.e., $(1/2-k\e) n \le a_2\le a_1 < (1/2+\e)n$.
Let $\r:=(k-1)\e /k$.
We apply Proposition~\ref{prop:Nv} with $k\e$ in place of $\e$ and obtain that, using $\r \le \e \le k\e/3$, 
\begin{enumerate}
\item for any $i=1,2$ and $v\in V_i$, $|\tilde{N}_{\r, 1}(v)| \ge (1/2 - 2k\e) n$,
\item for any $i\in [3,k]$, either $|\tilde{N}_{\r, 1}(v)| \ge k\e n$ for all vertices $v\in V_i$, or there exists a set $V_i'\subseteq V_i$ of size at most $k\e n+1$ such that $V_i''=V_i\setminus V_i'$ is $(\r, 1)$-closed in $H$.
\end{enumerate}

Since $a_1, a_2\ge (1/2-k\e)n\ge (1/3 + \r) n$, Fact~\ref{degree} (ii) implies that for \emph{any} $i\in [k]$, 
every set of three vertices of $V_i$ contains two vertices that are $(\r,1)$-reachable in $H$.
Together with (1), it allows us to apply Lemma~\ref{lem:P} to $V_1$, $V_2$ separately with $c=2$ and $\delta'=1/(2k) - 2\e$ and partition each of $V_1$ and $V_2$ into at most two parts such that each part is $(\beta, 2)$-closed. If $V_1$ or $V_2$ is $(\beta, 2)$-closed, then by Proposition~\ref{prop21}, it is $(\beta_k, t_k)$-closed, a contradiction. Thus we assume that each of $V_1$ and $V_2$ is partitioned into two parts $V_1= A_1\cup B_1$ and $V_2=A_2\cup B_2$ such that $A_i, B_i$ are $(\beta, 2)$-closed, and 
\[
 |A_i|, |B_i| \ge (\tfrac1{2k} - 2\e - \r) kn > (\tfrac12 - 3k\e)n. 
\]
Without loss of generality, assume that $|A_1|\le |B_1|$ and $|A_2|\le |B_2|$.

Let $I$ be the set of $i\in [3,k]$ such that $|\tilde{N}_{\r, 1}(v)| \ge k\e n$ for all vertices $v\in V_i$, and let $I'\subseteq I$ consist of those $i\in I$ such that $V_i$ is not $(\beta, 2)$-closed. We now apply Lemma~\ref{lem:P} to $V_i$ for $i\in I'$ with $c=2$ and $\delta'=\e$ and partition $V_i$ into at most two parts such that each part is of size at least $(\e - \r) kn= \e n$ and is $(\beta, 2)$-closed. Since $V_i$, $i\in I'$, is not $(\beta, 2)$-closed, it must be the case that $V_i$ is partitioned into two parts, denoted by $A_i$ and $B_i$.  
Let $W_0=\bigcup_{i\in [3,k]\setminus I} V_i'$ and note that $|W_0|\le (k-2) (k\e n+1)$. % YZ removed \le k^2\e n.
Let $\cP_0$
%\[
%\cP_0 :=\{W_0, W_1, W_2, W_3, W_4,\dots \}.
%\]
be the partition of $V(H)$ consisting of $W_0, A_1, B_1, A_2, B_2$ and $V''_i$ if $i\in [3,k]\setminus I$, $V_i$ if $i\in I\setminus I'$, or $A_i$, $B_i$ if $i\in I'$.
%such that each $W_j$, $j>0$ represents $V''_i$ for $i\in [3,k]\setminus I$, or $V_i$ for $i\in I\setminus I'$, or $A_i$, $B_i$ for $i\in \{1, 2\}\cup I'$.
%For example, $W_1=A_1$ and $W_2=B_1$.
By Proposition~\ref{prop21}, each part of $\cP_0$ except $W_0$ is $(\beta, 2)$-closed. 

%Abusing the notation, we will write $\bfu_{A_i}$ or $\bfu_{B_i}$ when $W_j=A_i$ or $W_j=B_i$.
For $i\in [k]$ for which $A_i$ and $B_i$ were defined, if $\bfu_{A_i} - \bfu_{B_i} \in L_{\cP_0}^{\mu}(H)$, then we \emph{merge} $A_i$ and $B_i$ by replacing $A_i$ and $B_i$ with $V_i$.
By Lemma~\ref{lem:trans}, $V_i = A_i\cup B_i$ is $(\beta_1, t_1)$-closed.
We inductively merge $A_i$, $B_i$ as long as $\bfu_{A_i} - \bfu_{B_i} \in L_{\cP'}^{\mu}(H)$, where $\cP'$ represents the current partition after merging some parts.
Since neither $V_1$ nor $V_2$ is $(\beta_k, t_k)$-closed in $H$, $A_1$ and $B_1$ (also $A_2$ and $B_2$) cannot be merged. 
After at most $k-2$ merges, we obtain a partition $\cP$ such that each part except $W_0$ is $(\beta_k, t_k)$-closed (by Proposition~\ref{prop21}).
Write $\cP:=\{W_0, W_1,\dots, W_d \}$.
Let $\tilde{I}\subseteq [k]$ be the set of $i$ such that $A_i, B_i \in \cP$ (note that $\bfu_{A_i} - \bfu_{B_i} \notin L_{\cP}^{\mu}(H)$ for $i\in \tilde{I}$).
%Let $d$ be the number of the parts in $\cP$ except $V_0$.
Write $T:=I_{\cP}^{\mu}(H) \subseteq  \{0, 1\}^d$. Given $i, j \in \tilde{I}$ and $\bfw\in T$, let $\bfw^i: = \bfw + \bfu_{A_i} - \bfu_{B_i} \pmod 2$ and $\bfw^{i, j} := \bfw + \bfu_{A_i} - \bfu_{B_i} + \bfu_{A_j} - \bfu_{B_j} \pmod 2$.
We have the following observations.
\begin{enumerate}
\item[($\dagger$)] If $\bfw\in T$, then $\bfw^i \notin T$ for $i\in \tilde{I}$.
\item[($\ddagger$)] If $\bfw\in T$, then $\bfw^{i, 1}\in T$ for $i\in \tilde{I}$.
\end{enumerate}
Indeed, for ($\dagger$), if $\bfw\in T$, then $\bfw^i \notin T$ because $\bfu_{A_i} - \bfu_{B_i}\notin L_{\cP}^{\mu}(H)$ for $i\in \tilde{I}$.
For ($\ddagger$), note that $\bfw^i$ has $1$ at $k$ coordinates, which correspond to $W_j\subseteq V_j$, $j\in [k]$ of size at least $\e n$ (where $W_j$ is $V_j$ or $V''_j$ or $A_j$ or $B_j$).
The number of the edges in $H$ that contain a crossing $(k-1)$-set in $\prod_{j\in [2,k]} C_j$ is at least $(\e n)^{k-1} a_1$.
Since $\bfw^i \notin T$, there are at most $\mu n^k$ edges $e$ in $H$ with $\bfi_{\cP}(e)=\bfw^i$. 
Consequently, the number of edges $e$ with $\bfi_{\cP}(e) = \bfw^{i, 1}$ is at least $(\e n)^{k-1} a_1 - \mu n^k \ge \mu n^k$, because $\mu \ll \e$ and $a_1\ge n/3$. Hence $\bfw^{i, 1} \in T$ and this proves ($\ddagger$).

A vector $\bfv\in \{0,1\}^{d}$ is \emph{even} (respectively, \emph{odd}) if there is an even (respectively, odd) number of $i\in \tilde{I}$ such that $\bfv |_{A_i}=1$. 
We claim that all the vectors in $T$ have the same parity. Indeed, assume that there is an even vector $\bfv \in T$. By ($\ddagger$), we know that all even vectors are in $T$. Together with ($\dagger$), this implies that $T$ contains no odd vector.

Assume that $T$ only contains even vectors (the case when $T$ only contains odd vectors is analogous).
Let $A :=\bigcup_{i\in \tilde{I}}A_i$ and $B:= V\setminus A$. Recall that an edge $e$ of $H$ is {even} if $|e\cap A |$ is even. Since $T$ only contains even vectors, $E(H\setminus W_0)$ contains at most $2^{k}\mu n^k$ odd edges. 
Recall that $(1/2 - 3k\e)n\le |A_1|, |A_2| \le n/2$.
In addition, we have shown that $\deg(v)\ge (1/2-k\e)n^{k-1}$ for all $v\in V(H)$ and thus $|E(H)|\ge (1/2-k\e)n^k$.
Since $\mu \ll \e$ and $|W_0|\le (k-2)(k\e n+1)$, there are at least 
\[
|E(H)| - 2^{k}\mu n^k - |W_0|n^{k-1}\ge (1/2-k\e)n^k - 2^{k}\mu n^k - (k-2)(k\e n+1) n^{k-1} \ge (1/2 - k^2\e)n^k
\]
even edges in $E(H)$.
Let $|A_1|=n/2-y$ for some $0\le y\le 3k\e n$ and assume that the number of odd crossing $(k-1)$-sets in $V\setminus V_1$ is $x$ for some $0\le x\le n^{k-1}$, then we get
\begin{align*}
|E_{even}(A, B)| &= (n/2-y) x + (n/2+y) (n^{k-1} - x) \\
&= n^k/2 + y(n^{k-1} - 2x) \le n^k/2 + 3k\e n^k.
\end{align*}
Thus we have $|E_{even}(A, B)\setminus E(H)|\le n^k/2 + 3k\e n^k - (1/2-k^2\e)n^k \le 2k^2\e n^k$.
Together with $(1/2 - 3k\e)n\le |A_1|, |A_2| \le n/2$ and $(1/2-k\e) n \le a_2\le a_1 < (1/2+\e)n$, and $a_i < \e n$, $3\le i\le k$, we conclude that $H$ is $2k^2\e$-D-extremal.
This completes our proof.
\end{proof}

\section{Nonextremal $k$-partite $k$-graphs: proof of Theorem \ref{thm:nonext}}

In this section we first show that every $k$-partite $k$-graph $H$ contains an almost perfect matching if $\sum a_i$ is near $n$ and $H$ is not close to $H_0$.  The following lemma is an analog of \cite[Lemma 1.7]{Han14_mat} in $k$-partite $k$-graphs.
To make it applicable to other problems, we prove it under a weaker assumption which allows a small fraction of crossing $(k-1)$-sets to have small degree. 

\begin{lemma}[Almost perfect matching]\label{lem:alm_mat}
%For any $\r, \a>0$, integer $k\ge 3$, $0\le \eta\le \a^k/2$, the following holds for sufficiently large integer $n$.
Let $1/n\ll \eta \ll \a, \r, 1/k$.
For $i\in [k]$, let $a_i=a_i(n)$ such that $\sum_{i=1}^k a_i \ge (1-\r) n$.
Let $H$ be a $k$-partite $k$-graph with parts of size $n$ which is not $2\r$-S-extremal.
Suppose for each $i\in [k]$, there are at most $\eta n^{k-1}$ crossing $(k-1)$-sets $S$ such that $S\cap V_i=\emptyset$ and $\deg(S)< a_i$.
Then $H$ contains a matching that covers all but at most $\a n$ vertices in each vertex class.
\end{lemma}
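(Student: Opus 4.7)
The approach is by contradiction: suppose $M$ is a maximum matching of $H$ with $|M| < n - \a n$, and let $U_j := V_j \setminus V(M)$, so $|U_j| > \a n$ for every $j \in [k]$. The plan is to force either an augmentation of $M$ (contradicting its maximality) or the appearance of a large independent set (contradicting the non-$2\r$-S-extremality of $H$).

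I would first set up the following framework. Call a crossing $(k-1)$-set $S \in \prod_{j \neq i} U_j$ \emph{good} if $\deg_H(S) \ge a_i$; by the hypothesis on bad $(k-1)$-sets and $\eta \ll \a$, all but an $o(1)$-fraction of such sets in $\prod_{j\neq i} U_j$ are good. Maximality of $M$ forces every neighbor of a good $S$ to lie in $V(M) \cap V_i$, since otherwise $S \cup \{v\}$ with an unmatched neighbor $v$ would augment. Setting $D_i(e) := |\{\text{good } S_i : e \cap V_i \in N(S_i)\}|$ and $T(e) := \{i : D_i(e) \ge 1\}$ and double counting yields
\[
\sum_{e \in M} \sum_{i \in [k]} D_i(e) \;\ge\; (1-\r)(1-o(1))\, n\, |U|^{k-1}.
\]

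The augmentation step: if some $e \in M$ admits $i \neq j$ with $D_i(e) \ge 1$ and $D_j(e) > (k-2)|U|^{k-2}$, then fixing any good $S_i$ witnessing $i \in T(e)$ and picking a good $S_j$ witnessing $j \in T(e)$ disjoint from $S_i$ (at most $(k-2)|U|^{k-2}$ candidate $S_j$'s intersect a fixed $S_i$), we replace $e$ by the two edges $\{e \cap V_i\} \cup S_i$ and $\{e \cap V_j\} \cup S_j$, enlarging $M$ and contradicting maximality. Hence every $e \in M$ is of type A ($|T(e)| \le 1$, contributing at most $|U|^{k-1}$ to the count) or type B ($|T(e)| \ge 2$ but every $D_i(e) \le (k-2)|U|^{k-2}$, contributing only $O(|U|^{k-2})$). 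Substituting into the count forces $|M_A| \ge (1-\r)n - o(n)$, which already contradicts $|M| < n - \a n$ in the case $\a > \r$.

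In the remaining regime $\a \le \r$, I would extract extremal structure. Partition $M_A = M_0 \sqcup \bigsqcup_{i \in [k]} M_i$ by the unique index in $T(e)$ (with $M_0$ for $T(e) = \emptyset$), and set $C_j := V_j \setminus V(M_j)$; by construction no vertex of $C_j$ is a neighbor of any good $S_j \in \prod_{\ell \neq j} U_\ell$. The per-direction estimate $|M_j| \gtrsim a_j$ together with $\sum_j |M_j| \le |M|$ yields $|M_j| \le a_j + O(\r n)$, hence $|C_j| \ge n - a_j - O(\r n)$, matching the $2\r$-S-extremal threshold up to constants.

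The main obstacle is to verify that $C := \bigcup_j C_j$ is essentially independent, so that a small cleanup produces an independent set violating non-$2\r$-S-extremality. For any candidate edge $e \subseteq C$, each codegree witness $S^{(i)} := e \setminus \{e \cap V_i\}$ would have $e \cap V_i \in C_i$ as a neighbor; since $C_i$ is disjoint from $N(S_i)$ for every good $S_i \in \prod_{j \neq i} U_j$, each such $S^{(i)}$ must either be bad or contain a vertex matched on some edge of $\bigsqcup_{\ell \neq i} M_\ell$. Bounding the total number of these exceptional edges via the $\le \eta n^{k-1}$ bound on bad codegree sets and the structural constraint $C_j \cap V(M_j) = \emptyset$ requires a careful double count, possibly supplemented by short augmenting-chain arguments when a purported edge $e \subseteq C$ has multiple matched vertices (so we can swap out two edges of $M$ simultaneously). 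A final cleanup removes $o(n)$ vertices from each $C_j$ to kill all remaining bad edges, producing a genuine independent set of the required size and the desired contradiction.
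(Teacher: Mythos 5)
Your overall strategy is sound in spirit and tracks the paper's outline (contradiction, double count to show most matching edges are ``single-direction,'' then use non-$2\r$-S-extremality to produce an edge in a large candidate set and augment). However, the crucial final step --- turning ``the candidate set $C$ is not independent'' into an actual augmentation of $M$ --- is left as a plan rather than a proof, and as stated it does not go through. Two concrete obstructions. First, your $C_j := V_j\setminus V(M_j)$ still contains every vertex lying on a type-B edge or an edge with $T(e)=\emptyset$; an edge $e_0\subseteq C$ can therefore hit a matching edge $e$ none of whose vertices is a neighbor of any good crossing $(k-1)$-set in $U$, in which case there is simply no way to release $e$ from $M$ and re-match its leftover vertex, so the ``short augmenting-chain argument'' you gesture at has nothing to work with. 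Second, even for $e\in M_j$ you only know $D_j(e)\ge 1$, i.e.\ \emph{one} good $S_j$ witnessing $j\in T(e)$. When $e_0$ meets up to $k$ such edges $e_1,\dots,e_p$, you need to choose pairwise disjoint replacement $(k-1)$-sets for the released anchor vertices $v_1,\dots,v_p$; with only one candidate per vertex, the greedy choice can collide and the augmentation can fail. These are not minor cleanup issues: they are exactly what the paper's construction is engineered to avoid.

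The paper's fix is to work with a \emph{fixed, constant-size} family $\A$ of $kt$ pairwise disjoint good crossing $(k-1)$-sets inside $V\setminus V(M)$ (with $t=\lceil k(k-1)/\r\rceil$), and to define $D_i$ as the vertices of $V_i\cap V(M)$ having at least $k$ neighbors in $\A$. This buys two things simultaneously: (a) $|e\cap D|\le 1$ for every $e\in M$ (else one could already augment using two disjoint members of $\A$), which makes the candidate set $V(M')\setminus D$ --- where $M'=\{e\in M:e\cap D\neq\emptyset\}$ --- have exactly the per-part sizes needed to contradict non-$2\r$-S-extremality; and (b) every vertex of $D$ has at least $k$ available, mutually disjoint replacement $(k-1)$-sets in $\A$, so that after finding $e_0\subseteq V(M')\setminus D$ one can greedily re-match the at most $k$ released anchors $v_1,\dots,v_p\in D$ by distinct members of $\A$. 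Crucially, every matching edge meeting $e_0$ lies in $M'$ and hence carries a $D$-anchor, so no ``unreleasable'' edges ever arise. If you want to salvage your version, you would need to (i) strengthen the definition of $M_j$ so that each anchor vertex has \emph{many} disjoint good $(k-1)$-sets available (a threshold $\ge k$ against a bounded family is the cleanest way to get this), and (ii) restrict the candidate set to vertices on edges that carry such an anchor, rather than taking the full complement of $\bigcup_j V(M_j)$.
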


\begin{proof}
Let $M$ be a maximum matching in $H$ and assume $m=|M|$. Let $V_i'=V_i\cap V(M)$ and $U_i=V_i\setminus V(M)$. Suppose to the contrary, that $|U_1| = \cdots = |U_k| > \a n$.

Let $t = \lceil k(k-1)/\r \rceil$.
We find a family $\A$ of disjoint crossing $(k-1)$-subsets $A_1, \dots, A_{k t}$ of $V\setminus V(M)$ such that $A_j \cap V_{i}=\emptyset$ and $\deg(A_j)\ge a_i$ whenever $j \equiv i \bmod k$.
This can be done greedily because when selecting $A_j$, the crossing $(k-1)$-sets that cannot be picked are either those that intersect the ones that have been picked, or those with low degree, whose number is at most
\[
k (k-1)t n^{k-2} + \eta n^{k-1} < (\a n)^{k-1} < \prod_{\ell \in [k]\setminus \{i\}} |U_{\ell}|,
\]
because $1/n\ll\eta \ll \a$. Note that the neighbors of $A_j$ are in $V'_i$ with $j \equiv i \bmod k$ by the maximality of $M$.

For $i\in[k]$, let $D_i$ be the set of the vertices of $V_i'$ that have at least $k$ neighbors in $\A$ and let $D=\bigcup D_i$.
We claim that $|e\cap D|\le 1$ for each $e\in M$. Indeed, otherwise assume that $x, y\in e\cap D$ and pick $A_i, A_{j}$ for some $i, j\in [k t]$ such that $\{x\}\cup A_i, \{y\}\cup A_j\in E(H)$. We obtain a matching of size $m+1$ by deleting $e$ and adding $\{x\}\cup A_i$ as well as $\{y\}\cup A_j$ in $M$, contradicting the maximality of $M$.

Next we show that $|D_i|\ge a_i - \r n/k$ for each $i\in [k]$. Since $N(A_j)\cap V'_{i} = \emptyset$ for $j\not\equiv i$ mod $k$, %by counting the number of edges between $V_i'$ and $\A$,  
we get
\[
t\cdot a_i \le \sum_{j\equiv i \text{ mod } k} \deg(A_j) \le |D_i| t +n (k-1).
\]
Since $t\ge k(k-1)/\r$, it follows that
\[
|D_i|\ge a_i - \frac{n (k-1)}{t} \ge a_i - \frac{\r n}{k}.
\]
This implies that $|D| = \sum_{i=1}^k |D_i| \ge \sum_{i=1}^k a_i - \r n$.
Since every edge of $M$ contains at most one vertex of $D$, we have $|D|\le |M| < n$ and consequently, $\sum_{i=1}^k a_i \le n + \r n = (1+\r)n$.

Define $M':=\{e\in M: e\cap D\neq \emptyset\}$. Then for each $i\in [k]$, we have
\[
|(V(M')\setminus D)\cap V_i| = \sum_{j\ne i}|D_j| \ge \sum_{j\in [k]} (a_j - \tfrac{\r n}{k}) - a_i \ge n-a_i - 2\r n.
\]
Since $H$ is not $2\r$-S-extremal, $H[V(M')\setminus D]$ contains at least one edge, denoted by $e_0$. Note that $e_0\not\in M$ because each edge of $M'$ contains exactly one vertex of $D$ and $e_0\subset V(M')\setminus D$.
Assume that $e_0$ intersects $e_{1}, \dots, e_{p}$ in $M$ for some $2\le p\le k$.  Suppose $\{v_{j}\} := e_{j}\cap D$. Note that $v_j\not\in e_0$ for all $j\in [p]$. Since each $v_j$ has at least $k$ neighbors in $\A$, we can greedily pick $A_{\ell_1}, \dots, A_{\ell_p}\in \A$ such that  $\{v_{j}\}\cup A_{\ell_j} \in E(H)$ for all $j\in [p]$. Let $M''$ be the matching obtained from $M$ after replacing $e_{1}, \dots, e_{p}$ by $e_0$ and $\{v_{j}\}\cup A_{\ell_j}$ for $j\in [p]$. Thus, $M''$ has $m+1$ edges, contradicting the choice of $M$.
\end{proof}

Now we are ready to prove Theorem \ref{thm:nonext}.
% We restate the theorem for the readers' convenience.
%
% \begin{theorem}[Non-extremal case]
% For an integer $k\ge 3$ and $\r \ll\e\ll 1/k$, the following holds for sufficiently large $n$.
% Let H be a $k$-partite $k$-graph with parts of size $n$ with $a_i:=\delta_{[k]\setminus\{i\}}(H)$ for $i\in [k]$ such that $(1-\e)n\ge a_1\ge a_2\ge \cdots \ge a_k$, $a_2\ge \e n$, and $\sum_{i=1}^k a_i \ge (1-\r/5)n$.
% Then one of the following holds:
% \begin{itemize}
% \item[(i)] $H$ contains a matching of size at least $n-1$;
% \item[(ii)] $H$ is $\r$-S-extremal;
% \item[(iii)] $H$ is $2k^2\e$-D-extremal.
% \end{itemize}
% \end{theorem}

\begin{proof}[Proof of Theorem \ref{thm:nonext}]
%Given $\r\ll\e \ll 1/k$, let $t\in \mathbb{N}$ and $\a>0$ be returned by Lemma~\ref{lem:abs2}.
%Let $n\in \mathbb{N}$ be sufficiently large.
Let $1/n \ll \eta \ll \a \ll \r \ll \e \ll 1/t \ll 1/k$.
%Suppose $H$ is a $k$-partite $k$-graph with parts of size $n$.
%Let $a_i:= \delta_{[k]\setminus\{i\}}(H)$ for $i\in [k]$.
%Suppose $(1-\e)n\ge a_1\ge \cdots \ge a_k$, $a_2\ge \e n$, and $\sum_{i\in [k]} a_i\ge (1 - \r/5) n$.
Suppose both (ii) and (iii) fail and we will show that (i) holds. 

First assume that $a_1\ge a_2\ge a_3\ge \e n$. We first apply Lemma~\ref{lem:abs1} to $H$ and find a matching $M'$ of size at most $\sqrt\a n$ such that for every balanced $2k$-set $S\subset V(H)$, the number of $S$-absorbing edges in $M'$ is at least $\a n$.
Let $H':=H\setminus V(M')$, $n':=|V(H')\cap V_i|\ge (1-\sqrt\a)n$ and $a'_i := \delta_{[k]\setminus \{i\}}(H')$.
Note that $\sum_{i=1}^k a'_i \ge \sum_{i=1}^k a_i - k\sqrt{\a} n \ge (1 - 2\r/5) n'$.
Assume for a moment that $H'$ is $(4\r/5)$-S-extremal, i.e., $\sum_{i=1}^k a_i'\le n' + (4\r/5) n'$ and $V(H')$ contains an independent set $C$ such that $|C\cap (V_i\cap V(H'))| \ge n'-a_i'-4\r n'/5$ for each $i\in [k]$.
Then as $\a\ll \r$, it follows that $\sum_{i=1}^k a_i \le \sum_{i=1}^k a_i' + k\sqrt\a n \le n' + (4\r/5) n' + k\sqrt\a n\le n+ \r n$ and
\[ 
n'-a_i'-4\r n'/5 \ge (1- \sqrt{\a})n - a_i - 4\r n/5 \ge n - a_i - \r n,
\]
This means that $H$ is $\r$-S-extremal, a contradiction. 
Thus, $H'$ is not $(4\r/5)$-S-extremal. By applying Lemma \ref{lem:alm_mat} to $H'$ with parameters $2\r/5$, $\a$ and $\eta$, we obtain a matching $M''$ in $H'$ that covers all but at most $\a n$ vertices in each vertex class.

Since there are at least $\a n$ $S$-absorbing edges in $M'$ for every balanced $2k$-set $S\subset V(H)$, we can repeatedly absorb the leftover vertices until there is one vertex left in each class.
Denote by $\tilde{M}$ the matching obtained after absorbing the leftover vertices into $M'$. Therefore $\tilde{M}\cup M''$ is the required matching of size $n-1$ in $H$.

Secondly assume that $a_1\ge a_2\ge \e n$ and $a_i<\e n$ for $i\in [3,k]$.
Since (iii) does not hold, by applying Lemma~\ref{lem:abs2}, there exists a family of disjoint absorbing $t k$-sets $\F'$ of size $|\F'|\le \sqrt\a n$ such that each $F\in \F'$ spans a matching of size $t$ and for every crossing $k$-set $S$ of $H$, the number of $S$-perfect-absorbing sets in $\F'$ is at least $\a n$.
Let $H':=H\setminus V(M')$ and $n':=|V(H')\cap V_i|\ge (1-t\sqrt\a)n$ and $a'_i := \delta_{[k]\setminus \{i\}}(H')$.
Note that $\sum_{i=1}^k a'_i \ge \sum_{i=1}^k a_i - k t \sqrt\a n \ge (1 - 2\r/5) n'$ as $\a \ll \e, 1/t$.
%If $H'$ is $(4\r/5)$-S-extremal, then $H$ is $\r$-S-extremal similarly as in the last case, a contradiction.
As before, we may assume that $H'$ is not $(4\r/5)$-S-extremal. 
By applying Lemma \ref{lem:alm_mat} to $H'$ with parameters $2\r/5$, $\a$ and $\eta$, we obtain a matching $M''$ in $H'$ that covers all but at most $\a n$ vertices in each vertex class.
Let $U$ be the set of leftover vertices.
Since any crossing $k$-subset $S$ of $U$ has at least $\a n$ $S$-perfect-absorbing $t k$-sets in $\F'$, we can greedily absorb all the leftover vertices into $\F$. Denote by $\tilde{M}$ the resulting matching that covers $V(\F')\cup U$. We obtain a perfect matching $\tilde{M}\cup M''$ of $H$.
\end{proof}

\section{Proof of Theorem \ref{thm:ext}}

We prove Theorem~\ref{thm:ext} in this section.
Following the approach in \cite{Han14_mat}, we use the following weaker version of a result by Pikhurko \cite{Pik}.
Let $H$ be a $k$-partite $k$-graph with parts $V_1, \dots, V_k$.
Given $L\subseteq [k]$, recall that
\[
\delta_{L}(H)=\min \left\{ \deg(S): S\in \prod_{i\in L}V_i \right\}.
\]

\begin{lemma}\cite[Theorem 3]{Pik}\label{lem:Pik}
Given $k\ge 2$ and $L \subseteq [k]$, let $m$ be sufficiently large. 
Let $H$ be a $k$-partite $k$-graph with parts $V_1, \dots, V_k$ of size $m$. If
\[
\delta_{L}(H) m^{|L|} + \delta_{[k]\setminus L}(H) m^{k-|L|} \ge  \frac 32m^k,
\]
then $H$ contains a perfect matching.
\end{lemma}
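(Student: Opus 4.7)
My plan is to argue by contradiction. Write $L^c := [k]\setminus L$. Suppose $H$ has no perfect matching; take a maximum matching $M$ of size $s \le m-1$, and set $U_i := V_i \setminus V(M)$, so each $|U_i| = m-s \ge 1$. Since $M$ is maximum, $H[U]$ is edgeless, and every $L$-tuple $S \in \prod_{i\in L} U_i$ has all its neighbors among the $L^c$-tuples that meet $V(M)$; this gives $\delta_L(H) \le m^{k-|L|} - (m-s)^{k-|L|}$, and symmetrically $\delta_{L^c}(H) \le m^{|L|} - (m-s)^{|L|}$. Plugging into the hypothesis yields
\[
m^{|L|}(m-s)^{k-|L|} + m^{k-|L|}(m-s)^{|L|} \le \tfrac{1}{2} m^k,
\]
which already pushes $s/m$ close to $1$, but is not yet sharp enough to force $s = m$, since the left side can be small even for small $(m-s)/m$.

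To close the gap I would exploit deeper augmenting swaps. For any $p \ge 0$, any edges $e_1,\dots,e_p \in M$, and any transversal $u_\ell \in U_\ell$ for $\ell \in [k]$, if the induced subhypergraph $H[\{u_1,\dots,u_k\} \cup V(e_1) \cup \cdots \cup V(e_p)]$ contains a perfect matching (of size $p+1$), then replacing $e_1,\dots,e_p$ by that matching extends $M$, contradicting maximality. Hence every such restricted $(p+1)k$-vertex subhypergraph must be matching-deficient. The plan is then to double count, over all such choices, the $L$-tuples $S$ and $L^c$-tuples $T$ with $S\cup T\in E(H)$, and to show that the combined lower bound $\delta_L(H) m^{|L|} + \delta_{L^c}(H) m^{k-|L|} \ge \tfrac{3}{2} m^k$ produces more ``augmentable'' configurations than is compatible with the failure of augmentation. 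As a sanity check, when $k=2$ this is Hall's theorem with $\delta_1+\delta_2 \ge 3m/2$: for any $S\subseteq V_1$ with $N(S)\ne V_2$, any $w\in V_2\setminus N(S)$ has $\delta_2$ neighbors in $V_1\setminus S$, so $|S|\le m-\delta_2$ and then $|N(S)|\ge \delta_1 \ge 3m/2-\delta_2 \ge m/2+|S| \ge |S|$.

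A cleaner alternative I would try in parallel is to recast the problem bipartitely: form the auxiliary bipartite graph $\Gamma$ with parts $X = \prod_{i\in L} V_i$ and $Y = \prod_{i\in L^c} V_i$, with an edge between $S\in X$ and $T\in Y$ exactly when $S\cup T \in E(H)$. The hypothesis becomes $\delta_\Gamma(X)/|Y| + \delta_\Gamma(Y)/|X| \ge 3/2$, and a perfect matching of $H$ corresponds to $m$ edges of $\Gamma$ whose $X$-endpoints form a perfect matching of the complete $|L|$-partite $|L|$-graph on $\bigcup_{i\in L}V_i$ and whose $Y$-endpoints form a perfect matching of the complete $(k-|L|)$-partite $(k-|L|)$-graph on $\bigcup_{i\in L^c}V_i$. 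The main obstacle I anticipate is precisely this compatibility constraint: a matching in $\Gamma$ does not automatically project to a matching in $H$, since two $\Gamma$-edges may share a $V_i$-coordinate on either side. The factor $3/2$ (rather than $1$) is exactly what allows the degree surplus on one side to compensate for the compatibility overhead on the other; making this quantitative — either through the exchange argument above or through a generalized defect-Hall condition on $\Gamma$ — is where I expect the delicate part of the proof to live.
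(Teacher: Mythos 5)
Your submission is a proof plan with the decisive step missing, not a proof. The first part (maximum matching $M$ of size $s$, unmatched sets $U_i$, the bounds $\delta_L(H)\le m^{k-|L|}-(m-s)^{k-|L|}$ and $\delta_{[k]\setminus L}(H)\le m^{|L|}-(m-s)^{|L|}$) is correct but, as you yourself note, only forces $s$ to be a constant fraction of $m$; this is exactly the strength of the elementary greedy/exchange argument that the paper records as Fact~\ref{thm:mat_num}, which stops at a matching of size $n-k+2$ and cannot by itself reach a perfect matching. The ``deeper augmenting swaps'' double count that is supposed to close the gap is never carried out, and there is no indication of how the hypothesis $\delta_{L}(H) m^{|L|} + \delta_{[k]\setminus L}(H) m^{k-|L|} \ge \tfrac 32 m^k$ would enter quantitatively; in your bipartite reformulation you explicitly flag the compatibility obstruction (a matching in $\Gamma$ need not project to a matching in $H$) and leave it unresolved. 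That obstruction is precisely the content of the lemma, so the attempt has a genuine gap at its core.

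For comparison: the paper itself gives no proof of this statement --- it is imported verbatim from Pikhurko (\cite[Theorem 3]{Pik}), so the benchmark is Pikhurko's argument. His proof starts from essentially your second reformulation but adds the missing idea: instead of working with all of $X=\prod_{i\in L}V_i$ and $Y=\prod_{i\in[k]\setminus L}V_i$, choose uniformly at random a perfect matching of the complete $|L|$-partite $|L|$-graph on $\bigcup_{i\in L}V_i$ and one of the complete $(k-|L|)$-partite $(k-|L|)$-graph on the other side, obtaining $m$ crossing $L$-tuples and $m$ crossing $([k]\setminus L)$-tuples whose union automatically respects the compatibility constraint. In the induced $m\times m$ bipartite graph, each tuple's degree concentrates around its expectation, which by the hypothesis gives normalized degrees summing to about $3/2$; since a bipartite graph on $m+m$ vertices with $\delta(X')+\delta(Y')\ge m$ satisfies Hall's condition, the slack $3/2-1$ absorbs the concentration error and yields a perfect matching of $H$. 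Without this randomization-plus-concentration step (or some substitute of comparable strength), your outline does not constitute a proof.
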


% Here we restate the theorem for the convenience of readers.
%
% \begin{theorem}[Extremal case I]
% For an integer $k\ge 3$ and $\r \ll \e\ll 1/k$, the following holds for sufficiently large $n$.
% Let H be a $k$-partite $k$-graph with parts of size $n$ and $a_i:=\delta_{[k]\setminus\{i\}}(H)$ for $i\in [k]$ such that $(1-\e)n\ge a_1\ge a_2\ge \cdots \ge a_k$ and $a_2\ge \e n$.
% Suppose $H$ is $\r$-S-extremal.
% Then one of the following holds:
% \begin{itemize}
% \item[(i)] $H$ contains a matching of size at least $\min\{n-1, \sum_{i=1}^k a_i\}$;
% \item[(ii)] $H$ is $3\e$-D-extremal.
% \end{itemize}
% \end{theorem}

\begin{proof}[Proof of Theorem \ref{thm:ext}]
Let $\a=\sqrt{\r}$.
%Suppose that $H$ is a $k$-partite $k$-graph with parts $V_1, \dots, V_k$ each of size $n$.  
%Let $a_i:=\delta_{[k]\setminus \{i\}}(H)$ for $i\in [k]$. 
%Suppose that $(1-\e)n\ge a_1\ge a_2\ge \cdots \ge a_k$ and $a_2\ge \e n$. 
Assume that $H$ is not $3\e$-D-extremal. Our goal is to find a matching in $H$ of size at least $\min\{n-1, \sum_{i=1}^k a_i\}$. Assume that $H$ is $\r$-S-extremal, namely, $\sum_{i=1}^k a_i\le n+ \r n$ and there is an independent set $C \subseteq V(H)$ such that  $|C \cap V_i| \ge n - a_i - \r n$ for each $i\in [k]$.

We may assume that $\sum_{i=1}^k a_i\ge n-k+3$, as otherwise we are done by Fact~\ref{thm:mat_num}.
%If $\sum_{i} a_i \ge n+k-1$, then we repeatedly remove edges from $H$ until $\sum_{i} a_i\le n+k-2$.
%Since removing one edge may decrease $\sum_{i} a_i$ by at most $k$, it is guaranteed  that $\sum_{i\in [k]} a_i \ge n-1$ throughout the process. Thus, $\min \{n-1, \sum_{i\in [k]} a_i \} = n-1$ throughout the process.
%Moreover, since we had $\sum_i a_i\le n+ \r n$, we know that $a_2$ decreases by at most $\r n + k$ so we still have $a_2 \ge \e n - \r n - k \ge \e n/2$.
So we have
\begin{equation}\label{eq:sum}
n -k + 3 \le \sum_{i=1}^k a_i \le n+\r n.
\end{equation}
%Since $H$ is $\r$-S-extremal, for each $i\in [k]$, there is an independent set $C \subseteq V(H)$ such that  $|C \cap V_i| \ge n - a_i - \r n$. 
For each $i\in [k]$, let $C_i :=C\cap V_i$. We know that $|C_i| \ge n - a_i - \r n \ge (\e - \r) n$ from the assumption that $a_i\le (1-\e)n$.
We partition each $V_i\setminus C_i$ into $A_i\cup B_i$ such that
\begin{equation}\label{eq:A}
A_i :=\left\{x\in V_i\setminus C_i: \deg(x, C)\ge (1-\a) \prod_{j\ne i}|C_j|\right\},
\end{equation}
and $B_i :=V_i\setminus (A_i\cup C_i)$.
Moreover, let $A :=\bigcup_{1\le i\le k} A_i$ and $B :=\bigcup_{1\le i\le k} B_i$.

\begin{claim}\label{clm:size}
For $i\in [k]$, we have
\begin{enumerate} [$(1)$]
  \item $a_i\le |A_i| + |B_i|\le a_i + \r n$,
  \item $|B_i|\le \a n$, and
  \item $a_i-\a n \le |A_i| \le a_i + \r n$.
\end{enumerate}
\end{claim}

\begin{proof}
For $i\in [k]$, since $|C_i|\ge n - a_i - \r n$, we have $|A_i| + |B_i|\le a_i + \r n$. 
For any crossing $(k-1)$-set $S\subset C\setminus V_i$, we have $N(S)\subseteq A_i\cup B_i$.
By the codegree condition, we have $|A_i| + |B_i|\ge a_i$.

Let $E_i$ denote the set of the edges that consist of a $(k-1)$-set in $\prod_{j\ne i} C_i$ and one vertex in $A_i\cup B_i$. By the definition of $A_i$, we have
\[
a_i \prod_{j\ne i}|C_j| \le |E_i|\le |B_i| (1-\a)\prod_{j\ne i}|C_j| + |A_i|\prod_{j\ne i}|C_j|,
\]
which implies $a_i \le |A_i|+|B_i|-\a |B_i|$. It follows that $\a |B_i| \le |A_i|+|B_i| - a_i \le \r n$ by Part (1). Since $\a=\sqrt{\r}$, it follows that $|B_i|\le \a n$.

Part (3) follows from Parts (1) and (2) immediately. 
\end{proof}

Our procedure towards the desired matching consists of three steps. First, we remove a matching that covers all the vertices of $B$. Secondly, we remove another matching in order to have $|C''_i| - \sum_{j\ne i} |A''_j|\le \max\{1, n-\sum_{i=1}^k a_i\}$ for all $i\in [k]$, where $C''_i$ and $A''_i$ denote the set of the remaining vertices in $C_i$ and $A_i$, respectively.
Finally, we apply Lemma~\ref{lem:Pik} to get a matching that covers all but at most $\max\{1, n-\sum_{i=1}^k a_i\}$ vertices in each $V_i$.

\medskip
\noindent \emph{Step 1. Cover the vertices of $B$.}
\smallskip

For $i\in [k]$, define $t_i := \max\{0, a_i - |A_i|\}$.
By Claim~\ref{clm:size} (1), we have $|B_i|\ge a_i - |A_i|$.
Together with the definition of $t_i$ and Claim \ref{clm:size} (2), we have 
\begin{equation}\label{mat:one}
t_i\le |B_i|\le \a n \quad \text{and} \quad t_i+|A_i|\ge a_i.
\end{equation}

First we build a matching $M_1^i$ of size $t_i$ for each $i\in [k]$ and let $M_1$ be the union of them. If $t_i=0$, then $M_1^i=\emptyset$. Otherwise, since $a_i=\delta_{[k]\setminus \{i\}}(H)$ and $C$ is independent, every $(k-1)$-set in $\prod_{j\ne i} C_j$ has at least $a_i - |A_i|= t_i$ neighbors in $B_i$. We greedily pick $t_i$ disjoint edges each of which consists of a $(k-1)$-set in $\prod_{j\ne i} C_j$ and one vertex in $B_i$.

Next for each $i$, we greedily build a matching $M_2^i$ that covers all the remaining vertices in $B_i$ and let $M_2$ be the union of them.
Indeed, for each of the remaining vertices $v\in B_i$ with $i\neq 1$, we pick one uncovered $(k-2)$-set $S'$ in $\prod_{j\ne i, 1} C_j$, and one uncovered vertex in $N(\{v\} \cup S') \subseteq V_{1}$. For each of the remaining vertices in $v\in B_1$, we pick one uncovered $(k-2)$-set $S'$ in $\prod_{j\ne 1, 2} C_j$, and one uncovered vertex in $N(\{v\} \cup S') \subseteq V_{2}$.
Since the number of vertices in $V_i$ covered by the existing matchings is at most $|M_1\cup M_2|\le |B| \le k\a n < \e n \le a_2\le a_1$, we can always find an uncovered vertex from $N(\{v\} \cup S')$.

For $i\in [k]$, let
\[
A_i':=A_i\setminus V(M_1\cup M_2), \,C_i':=C_i\setminus V(M_1\cup M_2) \text{ and } V_i':=V_i\setminus V(M_1\cup M_2).
\]

\medskip
\noindent \emph{Step 2. Adjust the sizes of $A'_i$ and $C'_i$.}
\smallskip

In this step, we will build a small matching $M_3$ in order to adjust the sizes of  $A'_i$ and $C'_i$.

\begin{claim}\label{claim:mat3}
There exists a matching $M_3$ of size at most $2k\r n$ in $H[\bigcup_{i=1}^kV_i']$ so that $|C_i'\setminus V(M_3)|-\sum_{j\ne i}|A_j'\setminus V(M_3)| = r$ for some integer $0\le r \le \max\{1, n-\sum_{i=1}^k a_i\}$.
\end{claim}

\begin{proof}
Let $n':=|V_i'|=|A_i'|+|C_i'|$.
Let $s_0:=|C_i'|-\sum_{j\ne i}|A_j'|=n'-\sum_{j=1}^k|A_j'|$, which is independent of $i$.
We claim that $-2k\r n \le s_0 \le n-\sum_{i=1}^k a_i$. Indeed,
\begin{align*}
s_0 &\ge (n-\left( |M_1| + |M_2| \right))- |A|\ge n - |B| - |A| \overset{\text{Claim}~\ref{clm:size}}{\ge} n - \sum_{i=1}^k (a_i + \r n) \overset{\eqref{eq:sum}}{\ge} - 2k \r n.
\end{align*}
On the other hand, since $V(M_1)\cap A=\emptyset$ and $|M_1| = \sum_{j=1}^k t_j$, we have
\begin{align*}
s_0 &\le n - \left( |M_1| + |M_2| \right) - \left(\sum_{j=1}^k|A_j| - |M_2| \right) = n-\sum_{j=1}^k(t_j+|A_j|)\overset{\eqref{mat:one}}{\le} n-\sum_{j=1}^k a_j.
\end{align*}

If $s_0\ge 0$, then set $M_3=\emptyset$ and we are done. 
Otherwise, we build $M_3$ by adding edges that contain two or three vertices of $A$ one by one until $s\in \{0, 1\}$, where $s:= (n'-|M_3|)-\sum |A_j'\setminus V(M_3)|$.
This will be done in the next few paragraphs.
Note that since $s_0 \ge -2k\r n$ and adding an edge to $M_3$ increases $s$ by one or two, we will have $|M_3|\le 2k\r n$.

Now we show how to build $M_3$.
First assume that $a_3 \ge 2k\a n$.
In this case we greedily choose the edges of $M_3$ until $s\in \{0, 1\}$ by picking two uncovered vertices, one from $A'_2$ and one from $A'_3$, an uncovered $(k-3)$-set in $\prod_{j\in [4, k]}C'_j$, and one uncovered vertex in $V_1'$ by the degree condition.
To see why we can find these edges, first, we can always pick two uncovered vertices in $A'_2\cup A_3'$ because by Claim~\ref{clm:size} (3),
\begin{equation}\label{eq:Ai'}
    |A_i'|\ge |A_i| - |M_2|\ge a_i - \a n - k\a n \ge 2k\r n
\end{equation}
for $i=2,3$.
Secondly, we can find an uncovered $(k-3)$-set in $\prod_{j\in [4, k]}C'_j$ because
\begin{equation}\label{eq:Ci'}
    |C'_j| \ge |C_j| - |M_1\cup M_2|\ge \e n - \r n - k\a n\ge 2k\r n.
\end{equation}
Thirdly, we can find the desired vertex in $V_1$ because the number of covered vertices in $V_1$ is at most $|B| + 2k \r n \le 2k\a n< a_1$.

Next assume that $|A_1|\ge (1/2+\e)n$.
In this case we greedily choose the edges of $M_3$ until $s\in \{0, 1\}$ by picking an uncovered vertex in $A_2'$, an uncovered $(k-2)$-set in $\prod_{j\in [3, k]}C'_j$, and by the degree condition, one uncovered vertex in $A_1'$.
To see why we can find these edges, first, we can pick an uncovered $(k-1)$-set $S\in A_2'\times \prod_{i\in [3,k]}C_i'$ because of \eqref{eq:Ai'} and \eqref{eq:Ci'}.
Secondly, note that $a_1\ge |A_1| - \r n\ge (1/2+\e-\r)n$ and
\[
|A_1'|\ge |A_1| - |M_2| \ge (\tfrac12+\e)n - k\a n = (\tfrac12+\e - k\a)n.
\]
Thus, $S$ has at least $a_1 - (n - |A_1'|) \ge 2k\r n$ neighbors in $A'_1$ so we can find an uncovered neighbor of $S$.

Now we assume that $|A_1| < (1/2+\e)n$ and $a_3 < 2k\a n \le \e n$.
In this case we show that (ii) holds, i.e., $H$ is $3\e$-D-extremal.
First, $a_1\le |A_1| + \a n < (1/2+\e+\a)n$.
Since $a_i\le a_3$ for $i\in [3,k]$ and $ \sum_{i=1}^k a_i\ge n-k+3$, we have
\[
a_2 \ge \sum_{i=1}^k a_i - a_1 - (k-2) a_3 \ge n - k+3 - (\tfrac{1}2+\e+\a)n - 2k(k-2) \a n \ge (\tfrac{1}2-2\e)n,
\]
i.e., $(1/2-2\e)n\le a_2\le a_1\le (1/2+2\e)n$.
By Claim~\ref{clm:size} (3), $|A_2|\le (1/2+2\e)n+\r n\le (1/2+3\e)n$ and $|A_i|\le a_i + \r n \le 3k\a n$ for $i\in [3,k]$.
The lower bounds on $a_1, a_2$ implies that $|A_1|, |A_2|\ge (1/2-2\e)n - \a n \ge (1/2-3\e)n$.
Finally, let $x$ be the number of crossing $k$-sets in $V(H)$ that intersect $A_i$ for some $i\in [3,k]$, then $x\le (k-2)3k\a n \cdot n^{k-1} \le 3k^2\a n^k$.
Let $y_1$ be the number of non-edges in $H[A_1, B_2\cup C_2, \dots, B_k\cup C_k]$ and let $y_2$ be the number of non-edges in $H[B_1\cup C_1, A_2, B_3\cup C_3,\dots, B_k\cup C_k]$.
By the definition of $A$ and $|B_i|\le \a n$, for $1\le i\le k$ we have
\[
y_i \le |A_i| \cdot \a \prod_{j\neq i}|C_j| + \sum_{j\neq i}|B_j|\cdot n^{k-1} \le \a n^k + (k-1) \a n^k = k\a n^k.
\]
Note that $|E_{odd}(A, B\cup C)\setminus E(H)|\le x+ y_1 + y_2\le 3k^2\a n^k + 2\cdot k\a n^k \le \e n^k$.
So (ii) holds, a contradiction.
\end{proof}

\medskip
\noindent \emph{Step 3. Cover the remaining vertices.}
\smallskip

Let $M_3$ and $r$ be as in Claim~\ref{claim:mat3}.
For each $i\in [k]$, let
\[
A_i'':=A_i'\setminus V(M_3), \, C_i'':=C_i'\setminus V(M_3) \text{ and } V_i'':=V_i'\setminus V(M_3).
\]
By the definitions of $M_1, M_2, M_3$, we have $|M_1\cup M_2\cup M_3|\le k\a n + 2k\r n \le (k+1)\a n$.
By Claim \ref{clm:size} (3), for each $i\in [k]$, we have
\begin{align*}
|A_i''|\ge |A_i|-|M_1\cup M_2\cup M_3|\ge (a_i-\a n) - (k+1) \a n \ge a_i - 2k\a n.
\end{align*}
Recall that $a_1\ge a_2\ge \e n$, by $\r \ll \e$, we have
\begin{align}\label{eq:A3}
|A_1''|, |A_2''| \ge a_2 - 2k\a n \ge \e n/2.
\end{align}
By Claim \ref{claim:mat3}, we have
\begin{align}\label{eq:s}
0\le r = |C_i''|-\sum_{j\ne i}|A_j''|\le \max\left\{1, n-\sum_{i=1}^k a_i \right\}.
\end{align}
For $i\in [k]$, since $a_i\le (1-\e)n$, we get that $|C_i|\ge \e n - \r n \ge 2(k+1)\a n$.
Thus,
\begin{align}\label{eq:C3}
|C_i''|\ge |C_i| - |M_1\cup M_2\cup M_3| \ge |C_i| - (k+1)\a n \ge |C_i|/2.
\end{align}

Now we greedily cover the vertices of $A_3'', \dots, A_k''$ with disjoint edges of $H$. 
Indeed, for every $3\le i\le k$ and every vertex $v\in A_i''$, we find a neighbor of $v$ from $\bigcup_{j\ne i} C''_i$. 
By \eqref{eq:A} and \eqref{eq:C3}, at most
\[
\a \prod_{j\neq i} |C_j| \le 2^{k-1}\a \prod_{j\neq i} |C_j''|
\]
crossing $(k-1)$-sets in $\prod_{j\neq i} C_j''$ are \emph{not} neighbors of $v$.
Since $|C_i''| = \sum_{j\ne i}|A_j''|+r$, at least $\min \{|A_1''|+r, |A_2''|+r\} \ge \e n/2$ vertices of $C_i''$ remain at the end of the greedy process.
The greedily algorithm works because $(\e n/2)^{k-1} \ge 2^{k-1}\a n^{k-1} > 2^{k-1}\a \prod_{j\neq i} |C_j''|$. 

Let $M_4$ be the resulting matching in this step.
Let $m_i:=|A_i''|$ for all $i=1,2$.
Note that there are $m_2+r$ and $m_1+r$ remaining vertices in $C_1''$, $C_2''$, respectively, and $m_1+m_2+r$ remaining vertices in $C_i''$ for $i\ge 3$.
Our goal is to cover all the remaining vertices of $H$. 
For $i=1,2$, let $C_1^2$ be a set of $m_2$ vertices in $C_1''\setminus V(M_4)$ and let $C_2^1$ be a set of $m_1$ vertices in $C_2''\setminus V(M_4)$; for $i\in [3,k]$, we can partition all but $r$ vertices of $C_i''\setminus V(M_4)$ into $C_i^1$ of size $m_1$ and $C_i^2$ of size $m_2$.
Therefore, we get $k$-partite $k$-graphs $H_i := H[A_i''\cup \bigcup_{\ell\ne i} C_{\ell}^i]$ for $i=1, 2$. Below we verify the assumptions of Lemma~\ref{lem:Pik} for $H_i$.

First, for $i\in [2]$ and any $(k-1)$-set $S\in \prod_{\ell\ne i} C_\ell^i$, the number of its non-neighbors in $A_i\cup B_i$ is at most
\[
|A_i| + |B_i| - a_i \overset{\text{Claim}~\ref{clm:size}}{\le}  \r n \overset{\eqref{eq:A3}}{\le} \r \cdot \frac{2}{\e} m_i \le \a m_i,
\]
as $\r \ll \e$ and $\a = \sqrt\r$.
So we have
\[
\delta_{[k]\setminus\{i\}}(H_i)\ge m_i - \a m_i= (1-\a)m_i.
\]
Next, for any $v\in A_i''$, by \eqref{eq:A} the number of its non-neighbors in $\prod_{\ell\ne i} C_\ell^i$ is at most
\[
\a \prod_{\ell \ne i}|C_\ell| < \a n^{k-1} \overset{\eqref{eq:A3}}{\le} \a \left(\frac{2}{\e} m_i \right)^{k-1} \le \sqrt\a m_i^{k-1},
\]
which implies that $\delta_{\{i\}}(H_i) \ge (1-\sqrt\a) m_i^{k-1}$. Thus, we have
\[
\delta_{\{i\}}(H_i) m_i + \delta_{[k]\setminus\{i\}}(H_i) m_i^{k-1} \ge (1- \sqrt\a) m_i^{k-1} m_i + (1 - \a)m_i m_i^{k-1} > \frac 32 m_i^k, 
\]
since $\r$ is small enough.
By Lemma~\ref{lem:Pik}, we find a perfect matching $M_5^i$ in $H_i$ for each $i\in [2]$.
Let $M_5 :=M_5^1\cup M_5^2$, then $M_1\cup M_2\cup M_3\cup M_4\cup M_5$ is a matching in $H$ of size at least $n - r$.
If $r\le 1$, then we obtain a matching of size at least $n-1$.
Otherwise, since $0<r\le n - \sum_{i=1}^k a_i$, we get a matching of size at least $\sum_{i=1}^k a_i$.
\end{proof}

\section{Proof of Theorem~\ref{thm:ext2}}

We call a binary vector $\bfv\in \{0,1\}^k$ \emph{even} (otherwise \emph{odd}) if it contains an even number of coordinates that have value $1$.
Let $EV_k$ denote the set of all even vectors in $\{0,1\}^k$. Note that $|EV_k|= 2^{k-1}$.
%Denote by $(V_1\cup \cdots \cup V_k, E)$ a $k$-partite $k$-graph with partition $V_1\cup\cdots\cup V_k$. 
Let $H=(V,E)$ be a $k$-partite $k$-graph with parts $V_1, \dots, V_k$.
Suppose $V$ also has a partition $A\cup B$, and let $A_i:=A\cap V_i$ and $B_i:=B\cap V_i$ for $i\in [k]$.
Recall that a set $S\subseteq V$ is \emph{even} (or \emph{odd}) if $|S\cap A|$ is even (or odd) and 
$E_{even}(A, B)$ consists all \emph{crossing} even $k$-subsets of $V$. 
Given a vector $\bfv\in \{0,1\}^k$, we write $V^\bfv = V^{\bfv}_1\cup \cdots \cup V^{\bfv}_k$, where $V^{\bfv}_i :=A_i$ if $\bfv|_{V_i}=1$ and $V^{\bfv}_i :=B_i$ otherwise.
Let $H(\bfv):=H[V^{\bfv}]$.
For any crossing $k$-set $S\in V^{\bfv}$, we say that $\bfv$ is the \emph{location vector} of $S$.
For $v\in V$, we define $\overline{\deg}_H(v):=\prod_{j\neq i}|V_j| - \deg_H(v)$, which is the degree of $v$ in the complement of $H$ under the same $k$-partition. Let
$\overline{\delta_1}(H):= \max_{v\in V(H)} \overline{\deg}_H(v)$.

The following theorem is a key step in the proof of Theorem~\ref{thm:ext2}. 
\begin{theorem}\label{thm:even_ext}
Suppose $1/n \ll \eta \ll\e_0, 1/k$ and $n$ is an even integer.
Let $H$ be a $k$-partite $k$-graph with parts $V_1, \dots, V_k$ of size $n$.  Suppose $A\cup B$ is a partition of $V(H)$ with $A_i:=A\cap V_i$ and $B_i:=B\cap V_i$ such that
\begin{enumerate}[$(i)$]
\item $|A_1|= |A_2|= n/2$,
\item $|A_i|=0$ or $\e_0 n\le |A_i|\le (1-\e_0) n$ for $i\ge 3$,
\item for any even vector $\bfv$, $\overline{\delta_1}(H(\bfv))\le \eta n^{k-1}$.
\end{enumerate}
Then $H$ contains a matching of size $n-1$.
Furthermore, if $|A|$ is even, then $H$ contains a perfect matching.
\end{theorem}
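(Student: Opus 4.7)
My plan is to realize the desired matching as a disjoint union of matchings, one per compatible even location vector $\bfv$, exploiting the fact that condition~(iii) makes each $H(\bfv)$ almost complete. Let $I:=\{i:|A_i|>0\}\supseteq\{1,2\}$, and call an even vector $\bfv\in\{0,1\}^k$ \emph{compatible} if it is supported on $I$. If a perfect matching uses exactly $m_\bfv$ edges of location vector $\bfv$, then the non-negative integers $(m_\bfv)_\bfv$ must satisfy $\sum_{\bfv:\bfv|_{V_i}=1}m_\bfv=|A_i|$ for every $i\in[k]$ together with $\sum_\bfv m_\bfv=n$. Summing the first family yields $\sum_\bfv|\bfv|\,m_\bfv=|A|$, which forces $|A|$ to be even, matching the ``furthermore'' hypothesis. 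Using $|A_1|=|A_2|=n/2$ and condition~(ii), I would exhibit an explicit non-negative integer solution in which every nonzero $m_\bfv$ is of order $\e_0 n$: restricting first to weight-$0$ and weight-$2$ vectors reduces the system to a non-negative flow on the complete graph with vertex set $I$ and prescribed node-weights $|A_i|$, which is feasible because $|A_i|\le|A|/2$ holds for every $i\in I$; higher-weight even vectors are introduced only to absorb excess when $\sum_{i\in I,\,i\ge 3}|A_i|>n$.

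Next, for each $i\in I$ I would randomly partition $A_i$ into pieces $\{A_i^\bfv:\bfv|_{V_i}=1,\ m_\bfv>0\}$ with $|A_i^\bfv|=m_\bfv$, and partition $B_i$ similarly. For each compatible even $\bfv$ with $m_\bfv>0$, let $G(\bfv)\subseteq H(\bfv)$ be the induced $k$-partite $k$-subgraph on parts of common size $m_\bfv$. A standard Chernoff estimate shows that with positive probability, for every such $\bfv$ every vertex of $G(\bfv)$ has non-degree at most $2\eta\e_0^{-(k-1)}m_\bfv^{k-1}$ in $G(\bfv)$; equivalently, the complement of $G(\bfv)$ has maximum vertex degree $o(m_\bfv^{k-1})$ and at most $O(\eta\e_0^{-k})m_\bfv^k$ edges in total. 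The latter bound implies that at most $O(\eta\e_0^{-k})m_\bfv^{k-1}$ crossing $(k-1)$-sets of $G(\bfv)$ have codegree below $m_\bfv/2$; I would destroy all such ``bad'' $(k-1)$-sets by a small reserving matching inside $G(\bfv)$ that hits at least one vertex of each bad set, after which the residual $k$-partite $k$-graph satisfies the hypothesis of Pikhurko's Lemma~\ref{lem:Pik} (with $L=\{1\}$) and therefore admits a perfect matching. Taking the union of these perfect matchings, one per $\bfv$, delivers the required perfect matching of $H$.

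For the general statement with $|A|$ possibly odd and target matching size $n-1$, I would modify Step~1 by introducing slacks $c_i\in\{0,1\}$ satisfying $\sum_i c_i\equiv|A|\pmod 2$, replacing each $|A_i|$ by $|A_i|-c_i$ and $n$ by $n-1$, and then repeating the argument verbatim. For $|A|$ even we take all $c_i=0$, recovering the perfect-matching case; for $|A|$ odd we take $c_1=1$ and $c_i=0$ for $i\ge 2$, which is feasible as $|A_1|=n/2>0$, and the uncovered vertex then lies in $A_1$. The principal obstacle I anticipate lies in the reserving step: condition~(iii) only bounds vertex non-degrees, whereas Pikhurko's Lemma requires a codegree-type bound, so some care is needed to cover the few low-codegree $(k-1)$-sets of $G(\bfv)$ with a matching without exhausting too many vertices. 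In the worst case the bad $(k-1)$-sets may share more vertices than a single matching can absorb directly, so one may need to first delete a handful of ``heavy'' vertices on which many bad $(k-1)$-sets concentrate (rebalancing the parts by also removing one vertex of each other part), and only then cover the remaining bad sets by a short reserving matching; making this step rigorous is the most technical part of the argument.
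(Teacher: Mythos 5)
Your overall architecture --- decompose the target matching by even location vectors, choose multiplicities $m_\bfv$ that respect the sizes $|A_i|$, randomly split each $A_i,B_i$ into blocks, and finish with a perfect-matching theorem applied to each almost-complete $k$-partite $k$-graph $H(\bfv)$ restricted to its blocks --- is the same high-level plan the paper uses. But there are two genuine gaps in the way you carry it out.

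First, you insist on an explicit solution $(m_\bfv)$ in which \emph{every} nonzero $m_\bfv$ has order $\e_0 n$, because your Chernoff concentration and the subsequent perfect-matching step both require each block to be macroscopically large. This is an unjustified claim and, in general, not something you can guarantee: integrality and the interplay between many small $|A_i|$'s may force some $m_\bfv$ to be tiny, and in any case Pikhurko's lemma (and Daykin--H\"aggkvist's) need ``$m$ sufficiently large'', so constant-size blocks are out of reach. The paper avoids the issue in two ways: it does not build $(m_\bfv)$ by a flow argument at all, but by first exhibiting a (near-)perfect matching $M$ of the \emph{complete} even graph $(V,E_{even}(A,B))$ via the short inductive Lemma~\ref{lem:even_mat}, and then reading the multiplicities off $M$; and it then splits $EV_k$ into vectors with small $m_\bfv$ (handled greedily, one edge at a time, using only the $\delta_1$-bound from condition $(iii)$) and vectors with large $m_\bfv$ (handled by the perfect-matching theorem). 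Your proposal has no mechanism for the small-$m_\bfv$ case, and this is a real hole.

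Second, and more seriously, you apply Pikhurko's Lemma~\ref{lem:Pik}, which with $L=\{1\}$ needs a codegree lower bound $\delta_{[k]\setminus\{1\}}(G(\bfv))\gtrsim m_\bfv/2$. Condition $(iii)$ controls only vertex degrees: it gives $\overline{\delta_1}(H(\bfv))\le\eta n^{k-1}$, which after the random split yields $\delta_1(G(\bfv))\ge(1-o(1))m_\bfv^{k-1}$ but says nothing about individual $(k-1)$-sets, some of which could have codegree $0$. Your proposed repair --- cover the low-codegree $(k-1)$-sets by a reserving matching, and if they concentrate, first delete ``heavy'' vertices and rebalance --- is exactly where the argument gets slippery: deleting vertices changes the codegree landscape, the bad sets may not be coverable by a matching at all, and you do not bound how many vertices the repair consumes. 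You flag this yourself as ``the most technical part,'' but as written it is a genuine gap, not just a technicality. The clean fix, which the paper uses, is to replace Pikhurko by Daykin--H\"aggkvist (Theorem~\ref{thm:DaHa}): it needs only $\delta_1(G)\ge(1-1/k)(m^{k-1}-1)$, which is precisely what the vertex-degree hypothesis $(iii)$ delivers after the random split with $m_\bfv$ large. Switching to that lemma removes the need for any reserving/repair step and closes the second gap; you would still need to add the greedy treatment of small $m_\bfv$ to close the first.
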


To prove Theorem~\ref{thm:even_ext}, we need the following simple result.

\begin{lemma}\label{lem:even_mat} 
Given a set $V$ of $kn$ vertices for some even integer $n$, let $V_1\cup \cdots \cup V_k$ and  $A\cup B$ be two partitions of $V$ such that $|V_1|=\cdots =|V_k|=n$ and $|A_1|= |A_2|= n/2$, where $A_i:=A\cap V_i$ and $B_i:=B\cap V_i$. 
Let $H= (V, E_{even}(A, B))$.
If $|A|$ is odd, then $H$ contains a matching of size $n-1$; if $|A|$ is even, then $H$ contains a perfect matching.
\end{lemma}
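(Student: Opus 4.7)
The plan is to recast the existence of a matching as a non-negative integer programming problem over the even location vectors $EV_k$, and then to exhibit a solution explicitly. For $\bfv\in\{0,1\}^k$, let $V_i^\bfv := A_i$ if $\bfv_i=1$ and $V_i^\bfv := B_i$ otherwise; then $H(\bfv):=H[V_1^\bfv\cup\cdots\cup V_k^\bfv]$ is a complete $k$-partite $k$-graph, and $H=\bigsqcup_{\bfv\in EV_k} H(\bfv)$. Hence any non-negative integers $(z_\bfv)_{\bfv\in EV_k}$ with $\sum_{\bfv} z_\bfv = n$ and $\sum_{\bfv:\bfv_i=1} z_\bfv = |A_i|$ for each $i\in[k]$ would yield a perfect matching of $H$: partition each $A_i$ (resp.\ $B_i$) into blocks of sizes $z_\bfv$ over $\bfv$ with $\bfv_i=1$ (resp.\ $0$), and take a perfect matching inside each $H(\bfv)$. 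The task reduces to producing such $(z_\bfv)$ when $|A|$ is even, and a summing-to-$(n-1)$ variant when $|A|$ is odd.

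For the perfect matching case, I would construct $(z_\bfv)$ in two stages, exploiting $|A_1|=|A_2|=n/2$. First, pick a parameter $\alpha\in\{0,\ldots,n/2\}$ and allocate $\alpha$ edges each to the $(\bfv_1,\bfv_2)$-types $(1,1)$ and $(0,0)$, and $n/2-\alpha$ edges each to $(1,0)$ and $(0,1)$; this consumes $V_1\cup V_2$ exactly. Second, assign to each of the $n$ edges a tail $(\bfv_3,\ldots,\bfv_k)\in\{0,1\}^{k-2}$ whose Hamming-weight parity matches that of $\bfv_1+\bfv_2$ (so that $\bfv\in EV_k$), with column sums $a_3,\ldots,a_k$. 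This stage amounts to the following sub-claim: given $E:=2\alpha$ rows that must have even weight, $O:=n-2\alpha$ rows that must have odd weight, and column targets $a_3,\ldots,a_k\in[0,n]$ summing to the even integer $s:=|A|-n$, an $n\times(k-2)$ $\{0,1\}$-matrix with these row parities and column sums exists for a suitable choice of $\alpha$. I plan to prove the sub-claim by induction on $k-2$, processing one column at a time by distributing its $a_k$ ones between currently-even and currently-odd rows---a choice of $(\gamma_E,\gamma_O)$ with $\gamma_E+\gamma_O=a_k$ and $\gamma_E\in[\max(0,a_k-O),\min(E,a_k)]$---then recursing with the updated parities; for $k=3$ the single column forces $\alpha=(n-a_3)/2$, which lies in $[0,n/2]$ and is an integer because $|A|$ even implies $a_3$ even.

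For the near-perfect case ($|A|$ odd), I would designate one uncovered vertex in each part. In $V_1$ and $V_2$ (where both $A_i$ and $B_i$ are non-empty of size $n/2$) we freely pick the uncovered vertex in $A$ or $B$; for $j\in[3,k]$ we pick $v_j^*\in A_j$ if $a_j=n$, $v_j^*\in B_j$ if $a_j=0$, and either one otherwise. The binary choices in $V_1$ and $V_2$ alone suffice to make the total uncovered-$A$ count odd, matching the parity of $|A|$. The residual hypergraph on the remaining $(n-1)$-vertex parts then has even $A$-count and admits a perfect matching via a mild adaptation of the two-stage construction above (now with asymmetric stage-one pairing sizes). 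The hard part of the proof will be establishing the sub-claim in stage two: unlike classical Gale--Ryser-type theorems, we constrain row \emph{parities} rather than row sums, and the feasibility region for $(E,O)$ depends intricately on the $a_i$'s. I plan to handle this by choosing $\alpha\in[0,n/2]$ to place $(E,O)=(2\alpha,n-2\alpha)$ inside a region where the column-by-column induction succeeds, using the freedom granted by $|A|$ being even.
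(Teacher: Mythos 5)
Your reduction is sound and genuinely different from the paper's argument: since every $H(\bfv)$ with $\bfv\in EV_k$ is a complete $k$-partite $k$-graph here, a perfect matching is equivalent to nonnegative integers $(z_\bfv)_{\bfv\in EV_k}$ with $\sum_\bfv z_\bfv=n$ and $\sum_{\bfv:\,\bfv_i=1}z_\bfv=|A_i|$ for every $i$, and you propose to exhibit such an allocation explicitly. The paper instead proceeds by induction on $n$: for $|A|$ even it removes two disjoint even edges at a time (two same-parity vertices of some $V_i$, $i\ge 3$, extended through one vertex each of $A_1,A_2,B_1,B_2$), and for $|A|$ odd it moves one vertex of $\bigcup_{i\ge3}A_i$ into $B$, applies the even case, and discards the single wrong-parity edge. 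Your route, if completed, gives a global, constructive description of the matching rather than an inductive one.

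As written, however, the argument is not complete at its crux. The existence of an $n\times(k-2)$ zero-one matrix with column sums $|A_3|,\dots,|A_k|$ and the prescribed row parities ``for a suitable $\alpha$'' is precisely what must be proved; you only sketch a column-by-column induction whose feasibility region you yourself describe as intricate and never determine, and the odd case is delegated to a ``mild adaptation'' that is not carried out. That adaptation is not automatic: once $|A_1'|\neq|A_2'|$, matching heads to tails forces the number $O_0$ of odd-tail rows to satisfy $\bigl||A_1'|-|A_2'|\bigr|\le O_0\le\min\{|A_1'|+|A_2'|,\,|B_1'|+|B_2'|\}$, and in degenerate situations (e.g.\ some $|A_j'|\in\{0,n-1\}$) $O_0$ is forced, so this constraint genuinely needs checking against your choice of uncovered vertices. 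Both gaps are repairable, and the clean repair reverses your order of choices: first take \emph{any} zero-one matrix with column sums $|A_3|,\dots,|A_k|$ (it exists since $|A_i|\le n$), let $O_0$ be its number of odd rows, which is automatically even because $\sum_{i\ge3}|A_i|=|A|-n$ is even, and only then set $\alpha:=(n-O_0)/2\in[0,n/2]$, assigning the even-tail rows heads $(1,1)$ and $(0,0)$ ($\alpha$ each) and the odd-tail rows heads $(1,0)$ and $(0,1)$ ($n/2-\alpha$ each); since $|A_1|=|A_2|=n/2$, every value of $O_0$ is acceptable and no feasibility analysis is needed. For $|A|$ odd, rather than prescribing one uncovered vertex per part, it is simpler to use the paper's device inside your framework: move one vertex of $\bigcup_{i\ge3}A_i$ to $B$ (possible since $|A|-n$ is odd), apply your even case to the modified partition, and delete the unique edge of the resulting perfect matching whose parity is wrong for the original partition; this avoids the asymmetric stage-one analysis entirely.
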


\begin{proof}
%We arbitrarily partition $V_3\cup \cdots \cup V_k$ into $n$ crossing $(k-2)$-sets and denote them by $\T$.
%Since $|A_1|+|A_2|=n$ is even, it follows that $\sum_{i\in [3,k]}|A_i| \equiv |A|$ mod $2$.
%First assume that $|A|$ is even. Then $\T$ contains an even number of odd members, and consequently, an even number of even members (because $n$ is even). Thus we can group the members of $\T$ into $n/2$ pairs such that two sets in each pair have the same parity.
%We extend the two $(k-2)$-sets in each pair to two even $k$-sets by adding four vertices, one from each of $A_1, A_2, B_1, B_2$ -- this is possible because the two $(k-2)$-sets have the same parity. We choose different vertices for each pair so that the resulting $n$ $k$-sets are pairwise disjoint, -- this gives the desired perfect matching of $H$. 
%
%Second assume that $|A|$ is odd. Then $\T$ consists of an odd number of odd sets, and consequently,
%an odd number of even sets. We can group the members of $\T$ into $n/2$ pairs such that in all but one pair, two members have the same parity. As in the case when $\T$ is even, we extend the two $(k-2)$-sets of $\T$ with the same parity to two even $k$-sets by adding one vertex from each of $A_1, A_2, B_1, B_2$.
%Let $T$ denote the even set in the last pair of $\T$. We form an even $k$-set with $T$ and two remaining vertices in $A_1\cup A_2$. 
%The gives a matching of $H$ of size $n-1$.

We first prove the case when $|A|$ is even by induction on $n$. 
The base case $n=2$ is simple: we divide the $2k-2$ vertices in $\bigcup_{i\ge 2} V_i$ arbitrarily into two $(k-1)$-sets and add the vertices of $V_1$ to make both sets even (these two $k$-sets have the same parity because $|A|$ is even). 
For the induction step, assume $n\ge 4$ (as $n$ is even). By picking two vertices in $V_i$, $i\ge 3$, with the same parity, we find two disjoint crossing $(k-2)$-sets in $\bigcup_{i\ge 3} V_i$ with the same parity.
We next extend them to two even $k$-sets by adding four vertices, one from each of $A_1, A_2, B_1, B_2$.
Since both $k$-sets are even, after deleting them, we can apply the inductive hypothesis. 

For the case when $|A|$ is odd, we apply the previous case after moving one vertex from $\bigcup_{i\in [3,k]}A_i$ to $B$ (note that $\sum_{i\in [3,k]}|A_i|$ is odd because $|A_1|+|A_2|=n$ is even).
 %($\bigcup_{i\in [3,k]}A_i\neq \emptyset$ because of the parity).
Since exactly one edge has the `wrong' parity, we obtain a matching of $H$ of size $n-1$.
\end{proof}

We also use the following result of Daykin and H\"aggkvist~\cite{DaHa} while proving Theorem~\ref{thm:even_ext}.

\begin{theorem}\cite{DaHa}\label{thm:DaHa}
Let $1/n\ll 1/k$.
If $H$ is a $k$-partite $k$-graph with parts of size $n$ such that $\delta_1(H)\ge (1-1/k) (n^{k-1} -1)$,
then $H$ contains a perfect matching.
\end{theorem}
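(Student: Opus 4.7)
My strategy is to decompose the target matching according to the even location vectors of its edges: for each even $\bfv \in EV_k$, I will find a perfect matching in a balanced sub-$k$-partite $k$-graph of $H(\bfv)$ via Daykin--H\"aggkvist (Theorem~\ref{thm:DaHa}), then take the union.

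\textbf{Step 1 (balanced allocation).} Produce nonnegative integers $(m_\bfv)$ indexed by even $\bfv$, with $m_\bfv = 0$ whenever $\bfv|_{V_i}=1$ and $|A_i|=0$, satisfying (a) $\sum_{\bfv:\bfv|_{V_i}=1} m_\bfv = |A_i|$ for every $i \in [k]$, and (b) every nonzero $m_\bfv$ is at least $\delta_0 n$, where $\eta^{1/(k-1)} \ll \delta_0 = \delta_0(\e_0,k) \ll \e_0$. A ``balanced'' \emph{fractional} feasible allocation is
\[
m_\bfv^{\mathrm{frac}} = \frac{2}{n^{k-1}} \prod_{i:\bfv|_{V_i}=1} |A_i| \prod_{i:\bfv|_{V_i}=0} |B_i|,
\]
whose feasibility follows from the identity $\sum_{\bfv' \in \{0,1\}^{[k]\setminus\{i\}},\,|\bfv'|\text{ odd}} \prod_{j\ne i} |A_j|^{\bfv'_j}|B_j|^{1-\bfv'_j} = \tfrac12\bigl[n^{k-1} - \prod_{j\ne i}(|B_j|-|A_j|)\bigr]$: the correction $\prod_{j\ne i}(|B_j|-|A_j|)$ vanishes for \emph{every} $i$ because $|A_1|=|B_1|$ and $|A_2|=|B_2|$ by hypothesis~(i), so at least one factor is zero. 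Every non-forbidden $m_\bfv^{\mathrm{frac}}$ is at least $2\e_0^k n$ since $|A_j|,|B_j|\ge \e_0 n$ in every non-forbidden position, by hypothesis~(ii). An integer feasible $(m_\bfv)$ is supplied by Lemma~\ref{lem:even_mat} applied to the ``all-even'' $k$-partite $k$-graph on the same partition; shifting by kernel moves of the form $\mathbf{e}_{\bfv_1}+\mathbf{e}_{\bfv_2}-\mathbf{e}_{\bfv_3}-\mathbf{e}_{\bfv_4}$ (for even $\bfv_j$ with $\bfv_1+\bfv_2=\bfv_3+\bfv_4$ in $\mathbb{Z}^k$, which preserves all row-sums) transports the integer solution to within a constant of $m_\bfv^{\mathrm{frac}}$ in each coordinate, securing~(b).

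\textbf{Step 2 (partition and apply Theorem~\ref{thm:DaHa}).} For each $i$, partition $A_i$ into sub-blocks $A_i^\bfv$ of size $m_\bfv$ (one per even $\bfv$ with $\bfv|_{V_i}=1$), and partition $B_i$ analogously into $B_i^\bfv$. For each $\bfv$ with $m_\bfv>0$, set $X_i^\bfv := A_i^\bfv$ if $\bfv|_{V_i}=1$ and $X_i^\bfv := B_i^\bfv$ otherwise, and let $H^\bfv := H\bigl[\bigcup_i X_i^\bfv\bigr]$, a $k$-partite $k$-graph with parts of size $m_\bfv$. By hypothesis~(iii), every vertex of $H^\bfv$ has at most $\eta n^{k-1}$ non-neighbors, hence
\[
\delta_1(H^\bfv) \ge m_\bfv^{k-1} - \eta n^{k-1} \ge \bigl(1-\tfrac1k\bigr)\bigl(m_\bfv^{k-1}-1\bigr),
\]
since $m_\bfv \ge \delta_0 n$ and $\eta \ll \delta_0^{k-1}/k$. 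Theorem~\ref{thm:DaHa} now gives a perfect matching $M^\bfv$ of $H^\bfv$, and $\bigcup_\bfv M^\bfv$ is a perfect matching of $H$ when $|A|$ is even. For the odd case, Lemma~\ref{lem:even_mat} supplies an allocation with $\sum m_\bfv = n-1$ that reserves one vertex per class forming an odd crossing $k$-set; running the same Step~2 on the remaining vertices yields a matching of size $n-1$.

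\textbf{Main obstacle.} The technical heart lies in Step~1. A raw integer template from Lemma~\ref{lem:even_mat} can contain some $m_\bfv$ as small as $1$, for which Daykin--H\"aggkvist would fail in that sub-block. The kernel-shifting argument that realigns the integer template with the balanced fractional profile is the crux; the vanishing of the parity correction (and hence the balance of $m^{\mathrm{frac}}$) is what gives us room for this, and it relies essentially on hypothesis~(i).
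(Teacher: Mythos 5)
Your proposal does not prove the statement in question. The statement is the Daykin--H\"aggkvist theorem itself: for an \emph{arbitrary} $k$-partite $k$-graph with parts of size $n$, the bare minimum vertex-degree condition $\delta_1(H)\ge (1-1/k)(n^{k-1}-1)$ forces a perfect matching. The paper supplies no proof of this; it is imported from \cite{DaHa} as a black box. What you have written is instead an argument for Theorem~\ref{thm:even_ext}: your hypotheses (i)--(iii) (the partition $A\cup B$ with $|A_1|=|A_2|=n/2$, the size condition on the $|A_i|$ for $i\ge 3$, and near-completeness of $H(\bfv)$ for every even $\bfv$) are the hypotheses of that theorem, not of Theorem~\ref{thm:DaHa}, and your conclusion (a matching of size $n-1$, perfect when $|A|$ is even) is its conclusion. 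Moreover, Step~2 explicitly invokes Theorem~\ref{thm:DaHa} to finish each block $H^\bfv$, so read as a proof of Theorem~\ref{thm:DaHa} the argument is circular: nowhere do you derive a perfect matching from the degree bound $(1-1/k)(n^{k-1}-1)$ alone, which is the entire content of the statement. A genuine proof would have to proceed with no partition structure available, e.g.\ by the maximum-matching augmentation/counting argument of Daykin and H\"aggkvist's original paper.

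As an aside, even viewed as an attempt at Theorem~\ref{thm:even_ext}, your route differs from the paper's and is weakest exactly where you flag the crux: the paper obtains the multiplicities $m_\bfv$ directly from a matching of the idealized graph $(V,E_{even}(A,B))$ given by Lemma~\ref{lem:even_mat}, treats the location vectors with small $m_\bfv$ by a greedy argument, and applies Daykin--H\"aggkvist only to the large blocks; this sidesteps entirely the ``balanced fractional allocation plus kernel moves'' step, which in your sketch is asserted rather than carried out.
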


\medskip
\begin{proof}[Proof of Theorem~\ref{thm:even_ext}]
We first note that for any $\bfv\in EV_k$ and for arbitrary subsets $U_i \subseteq V^\bfv_i$, $i\in [k]$, such that $|U_i|\ge \eta^{1/(2k)} n$, $(iii)$ implies that
\begin{equation}\label{eq:deg}
\overline{\delta_1}(H[U_1,\dots, U_k]) \le \eta n^{k-1} \le \eta^{1/2} \prod_{2\le i\le k} |U_i|.
\end{equation}

We now apply Lemma~\ref{lem:even_mat} to $H' := (V, E_{even}(A, B))$ and conclude that $H'$ contains a matching $M$ of size at least $n-1$; moreover, $M$ is perfect if $|A|$ is even.
Let $S:=V\setminus V(M)$.
For each $\bfv\in EV_k$, let $m_{\bfv}$ be the number of edges in $M$ with location vector $\bfv$.
Then $\sum_{\bfv\in EV_k} m_{\bfv}=|M|$ as all the edges in $M$ are even.

It suffices to build a matching of $H$ that consists of $m_{\bfv}$ edges with location vector $\bfv$ for each $\bfv\in EV_k$. 
Let us partition $EV_k$ into $\V_1 \cup \V_2$ such that $\V_1$ consists of all $\bfv$ with $m_\bfv < \eta^{1/(2k)} n$.
For each $\bfv\in \V_1$, we greedily find a matching of size $m_{\bfv}$ in $H_\bfv$.
To see why this is possible, note that in total at most $2^{k-1}\eta^{1/(2k)} n \le \e_0^{2} n$ edges of $M$ have their location vectors in $\V_1$. 
Consequently the number of the crossing $(k-1)$-sets in $V^\bfv_2 \cup \dots \cup V^\bfv_k$ that intersect these edges is at most
\[
\e_0^2 n \sum_{2\le i\le k} \prod_{2\le j\le k, j\ne i} |V^\bfv_j| \le (k-1)\e_0 \prod_{2\le i\le k} |V^\bfv_i|
\]
because $|V_i^\bfv|\ge \e_0 n$ for $2\le i\le k$.
By~\eqref{eq:deg}, for any $v\in V_1^\bfv$, we have $\deg_{H(\bfv)}(v) \ge (1-\eta^{1/2}) \prod_{2\le i\le k} |V^\bfv_i| > (k-1)\e_0 \prod_{2\le i\le k} |V^\bfv_i|$ -- this guarantees the existence of the desired matchings for all $\bfv \in \V_1$.

Next we arbitrarily divide the remaining vertices of $V\setminus S$ into balanced vertex partitions $U_\bfv = U^\bfv_1 \cup \dots \cup U^\bfv_k$, $\bfv\in \V_2$, such that $U^{\bfv}_i \subseteq V^\bfv_i$ and $|U^{\bfv}_1|= \cdots = |U^\bfv_k|= m_{\bfv}$ -- this is possible because $\sum_{\bfv\in EV_k} m_{\bfv}=|M|$.
By~\eqref{eq:deg}, we know that $\delta_1(H[U_{\bfv}]) \ge (1-\eta^{1/2})m_{\bfv}^{k-1}\ge (1-1/k) m_{\bfv}^{k-1}$ as $\eta$ is small enough.
We thus apply Theorem~\ref{thm:DaHa} to each $H[U_{\bfv}]$ and get a perfect matching of $H[U_{\bfv}]$.
Putting all the matchings that we obtained together gives a matching of size $|M|$ in $H$.
\end{proof}

% Now we prove Theorem~\ref{thm:ext2}, which we restate for the convenience of the reader.
%
% \begin{theorem}[Extremal case II]
% For any integer $k\ge 3$ and $\e \ll 1/k$, the following holds for sufficiently large $n$.
% Suppose $H$ is a $k$-partite $k$-graph with parts of size $n$ and $a_i:=\delta_{[k]\setminus\{i\}}(H)$ for $i\in [k]$.
% If $H$ is $\e$-D-extremal, then $H$ contains a matching of size $\min\{n-1, \sum_{i=1}^k a_i\}$.
% \end{theorem}

\begin{proof}[Proof of Theorem~\ref{thm:ext2}]
Pick a new constant $\e_0$ such that $\e \ll \e_0 \ll 1/k$.
We assume that $\sum_{i=1}^k a_i\ge n-k+3$ -- otherwise we are done by Fact~\ref{thm:mat_num}.
%Suppose $H=(V_1\cup \cdots \cup V_k, E)$ is a $k$-partite $k$-graph with $n$ vertices in each part with $a_i:=\delta_{[k]\setminus\{i\}}(H)$ for all $i\in [k]$.
Moreover, suppose $H$ has a vertex bipartition $A\cup B=V_1\cup \cdots \cup V_k$ such that
\begin{itemize}
\item[($\dagger$)] $(1/2-\e)n\le a_1, a_2, |A_1|, |A_2| \le (1/2+\e)n$, and $a_i\le \e n$, $3\le i\le k$, 
\item[($\ddagger$)] $|E_{even}(A, B)\setminus E|\le \e n^k$.
\end{itemize}
Note that we obtain ($\ddagger$) after switching $A_1$ and $B_1$ if $|E_{odd}(A, B)\setminus E|\le \e n^k$. 
Furthermore, the above two properties remain valid if we switch an even number of $A_i$ with $B_i$.
Thus we may switch $A_1, A_i$ with $B_1, B_i$ whenever $|A_i| > |B_i|$ for some $i\ge 3$.
This results in $|A_i|\le |B_i|$ for all $i\ge 3$ eventually.
Moreover, by Fact~\ref{degree} (i) and ($\dagger$), we know that $\delta_1(H)\ge (1/2-\e)n^{k-1}$.

We now define \emph{atypical} vertices. Let $W$ be the set of $u\in V$ such that there exists an even $\bfv\in EV_k$ such that $u\in V^{\bfv}$ and $\overline{\deg}_{H(\bfv)}(u) > \sqrt\e n^{k-1}/2$.
Let $W_0:=W\cap (V_1\cup V_2)$.
Since each vertex in $W$ contribute at least $\sqrt\e n^{k-1}/2$ $k$-sets towards $|E_{even}(A, B)\setminus E|$ (and such a $k$-set can be counted at most $k$ times), by ($\ddagger$), we have
\begin{equation}\label{eq:W}
|W_0|\le |W| \le \frac{k\e n^k}{\sqrt\e n^{k-1}/2} \le 2k \sqrt\e n.
\end{equation}
When forming a matching of $H$, we prefer to use the edges that intersect both $A_1, A_2$ or both $B_1, B_2$ -- we will call them \emph{horizontal edges}. Correspondingly, the edges that intersect both $A_1, B_2$ or both $A_2, B_1$ are \emph{diagonal}.
We distinguish the vertices of $W_0$ that lie in fewer horizontal edges from the rest of $W_0$.
For $i=1,2$, let $W_{A_i}$ be the set of vertices of $W_0\cap A_i$ that lie in less than $\e_0 n^{k-1}$ horizontal edges; similarly let $W_{B_i}$ be the set of vertices of $W_0\cap B_i$ that lie in less than $\e_0 n^{k-1}$ horizontal edges.

Define $B_i^0:=(B_i\setminus W_{B_i})\cup W_{A_i}$ for $i=1,2$ and $B_i^0:=B_i$ for $i\ge 3$. Let $A_i^0:=V_i\setminus B_i^0$ for all $i$.
Let $A^0:=\bigcup_{i\in [k]} A_i^0$ and $B^0 := V\setminus A^0$.
Finally, let
\[
q := |B_2^0| - |B_1^0| = |B_2| - |B_1| + |W_{A_2}| + |W_{B_1}| - |W_{A_1}| - |W_{B_2}|.
\]
By ($\dagger$) and \eqref{eq:W}, $|q|\le 2\e n + 2k \sqrt\e n\le 3k\sqrt\e n$.
By relabelling $V_1$ and $V_2$ if necessary, we may assume that $q\ge 0$. Note that we still have $|A_i| \le |B_i|$ for all $i\ge 3$.

Our goal is to remove a small matching and possibly some crossing $k$-sets (non-edges) from $H$ such that we can apply Theorem~\ref{thm:even_ext} to the remaining subgraph of $H$.
To achieve the goal, we conduct the following five steps: we remove disjoint matchings $M_1, \dots, M_4$ in the first four steps and a balanced vertex set $S_5$ in the fifth step.
For $1\le j\le 4$, we define $A^{j}:=A^{j-1}\setminus V(M_j)$, $B^{j}:=B^{j-1}\setminus V(M_j)$, and $V^j:=A^j\cup B^j$.
Let $A^{5}:=A^{4}\setminus S_5$, $B^{5}:=B^{4}\setminus S_5$ and $V^5:=A^5\cup B^5$.
For $1\le j\le 5$ and $1\le i\le k$, define $A_i^{j}:=A^{j}\cap V_i$, $B_i^{j}:=B^{j}\cap V_i$, and $V_i^j:=A_i^j\cup B_i^j$.

\medskip
\noindent\textit{Step 1. Reducing the gap between $|B_1^0|$ and $|B_2^0|$.}
Our first matching $M_1$ is crucial for balancing the sizes of $B_1^0$ and $B_2^0$, and this is the only place that we need the exact codegree condition.
Let $H_1:=H[A_1^0\cup B_2^0 \cup V_3 \cup \dots \cup V_k]$ and note that
\[
\delta_{[k]\setminus \{1\}}(H_1) \ge a_1 - |B_1^0|, \quad \delta_{[k]\setminus \{2\}}(H_1) \ge a_2 - (n - |B_2^0|)
\]
and $\delta_{[k]\setminus \{i\}}(H_1)\ge a_i$ for $3\le i\le k$.
So we have
\begin{align*}
\sum_{i=1}^k\delta_{[k]\setminus \{i\}}(H_1) \ge \sum_{i=1}^k a_i + |B_2^0| - |B_1^0| - n = q - n + \sum_{i=1}^k a_i.
\end{align*}
By $(\dagger$), we have $\sum_{i=1}^k a_i\le n + k\e n$.
Thus,
\begin{equation*}
q - n + \sum_{i=1}^k a_i \le q + k\e n\le 4k\sqrt\e n < \min_{i\in [k]} |V(H_1)\cap V_i|-k.
\end{equation*}
If $q - n + \sum_{i=1}^k a_i>0$, then Fact~\ref{thm:mat_num} provides a matching $M'$ of size $q - n + \sum_{i=1}^k a_i$ in $H_1$.
Let $M_1:=M'$ if $\sum_{i=1}^k a_i\le n$ and let $M_1\subseteq M'$ be a (sub)matching of size $q$ if $\sum_{i=1}^k a_i > n$.
Otherwise let $M_1:=\emptyset$.
So we have $|M_1|\le q\le 3k\sqrt\e n$ in all cases.

\medskip
\noindent \textit{Step 2. Cleaning $V_1$ and $V_2$.}
In this step we find a matching $M_2$ that covers all the remaining vertices of $W_0$ and uses the same amount of the vertices from $A_1^0$ and $A_2^0$. 
Let $W'_0 := (W_{A_1}\cup W_{A_2}\cup W_{B_1}\cup W_{B_2}) \setminus V(M_1) $ and $W''_0 := W_0\setminus (W'_0\cup V(M_1))$. 
We cover the vertices of $W''_0$ and $W'_0$ as follows.
\begin{enumerate}
\item By definition, each vertex $u\in W''_0$ lies in at least $\e_0 n^{k-1}$ horizontal edges, i.e., those that intersect both $A_1$ and $A_2$, or intersect both $B_1$ and $B_2$.
By~\eqref{eq:W}, among these edges, at least $\e_0 n^{k-1}/2$ horizontal edges do not intersect $W$, so they intersect both $A_1^0$ and $A_2^0$, or intersect both $B_1^0$ and $B_2^0$.
We greedily cover the vertices of $W''_0$ by these disjoint horizontal edges.
\item Since $\delta_1(H)\ge (1/2-\e)n^{k-1}$,
by definition, every vertex $u\in W'_0$ lies in at least $(1/2-\e)n^{k-1} - \e_0 n^{k-1}$ diagonal edges. 
By the definitions of $A_1^0, A_2^0, B^0_1, B^0_2$ and $\e\ll \e_0$, each $u\in W'_0$ lies in at least
$(1/2-\e)n^{k-1} - \e_0 n^{k-1} - |W_0| n^{k-2}\ge \e_0 n^{k-1}$ edges that intersect both $A_1^0$ and $A_2^0$, or both $B_1^0$ and $B_2^0$. We greedily cover the vertices of $W'_0$ by such edges.
\end{enumerate}

To see why the above process is possible, we note that when finding an edge for a vertex $u$, the number of vertices that we need to avoid is at most $|V(M_1)| + k|W_0|\le 3k^2\sqrt\e n + 2k^2 \sqrt\e n= 5k^2\sqrt\e n$ by~\eqref{eq:W} and $|M_1|\le 3k\sqrt\e n$. 
Hence these vertices lie in at most $5k^2 \sqrt\e n^{k-1} < \e_0 n^{k-1}/2$ crossing $(k-1)$-sets, so we can find an edge that covers $u$ and avoids all the existing edges.

Let us bound $|B_2^2| - |B_1^2|=|B_2^0\setminus V(M_1\cup M_2)| - |B_1^0\setminus V(M_1\cup M_2)|$.
By the definition of $M_1$, we have $|B_1^0\cap V(M_1)|=0$ and $|B_2^0\cap V(M_1)|=|M_1|$. By the definition of $M_2$, we have $|B_1^0\cap V(M_2)| = |B_2^0\cap V(M_2)|$.
Thus,
\[
|B_2^2| - |B_1^2| = |B_2^0| - |B_1^0| - |M_1| = q - |M_1|.
\]
Note that
\begin{align*}
q-|M_1|=
\begin{cases}
n - \sum_{i=1}^k a_i & \text{ if } q - n + \sum_{i=1}^k a_i\ge 0 \text{ and } \sum_{i=1}^k a_i\le n; \\
0 & \text{ if } q - n + \sum_{i=1}^k a_i\ge 0\text{ and }\sum_{i=1}^k a_i> n;\\
q\le n - \sum_{i=1}^k a_i &\text{ if } q - n + \sum_{i=1}^k a_i<0.
\end{cases}
\end{align*}
So we have
\begin{equation}\label{eq:B_diff}
0\le |B_2^2| - |B_1^2| = q - |M_1| \le \max\left\{0, n - \sum_{i=1}^k a_i \right\} \le k-3.
\end{equation}

\medskip
\noindent \textit{Step 3. Cleaning $V_3,\dots, V_k$.}
Let $X$ consist of all $x\in A_i\setminus W$ for some $i\ge 3$ such that $|A_i|\le 2\e_0 n$.
In this step we build a matching $M_3$ which covers all the remaining vertices of $W$ and the vertices of $X$ and satisfies that
\begin{equation}\label{eq:step3}
-1 \le |B^2_1\cap V(M_3)| - |B^2_2 \cap V(M_3)| \le 0.
\end{equation}

Let $W'$ be the set of vertices in $(W_3\cup \cdots \cup W_k)\setminus V(M_1\cup M_2)$ that are contained in at least $3k^2\e_0 n^{k-1}$ horizontal edges.
Let $W'':=(W_3\cup \cdots \cup W_k)\setminus (V(M_1\cup M_2)\cup W')$.
Since $\delta_1(H)\ge (1/2-\e)n^{k-1}$, by definition, each $u\in W''$ is contained in at least $(1/2-\e)n^{k-1} - 3k^2\e_0 n^{k-1}$ diagonal edges. 
Note that by ($\dagger$), we have $|B_1| |A_2|\le (1/4+3\e)n^2$. 
Then since $u$ lies in at most $|B_1| |A_2| n^{k-3}\le (1/4+3\e)n^{k-1}$ edges that intersect both $B_1$ and $A_2$, there are at least $3k^2\e_0 n^{k-1}$ edges that contain $u$ and intersect both $A_1$ and $B_2$.
Note that by symmetry, the same statement holds for $u$, $A_2$ and $B_1$.
Finally, for any vertex $x\in X$, assume that $x\in A_i$ for some $3\le i\le k$.
Since the binary vectors $\bfv\in \{0,1\}^k$ with exactly two $1$'s are even, the fact that $x\not\in W$ implies that $x$ is contained in at least
\[
\prod_{j\in [k]\setminus \{1,i\}} |B_j| \cdot |A_1| - \frac12 \sqrt\e n^{k-1} \ge \frac{n^{k-1}}{2^k} -  \frac12 \sqrt\e n^{k-1}>  3k^2\e_0 n^{k-1}
\]
edges in $A_1\cup (\bigcup_{2\le j\le k, j\neq i}B_j)\cup A_i$, and in at least $3k^2\e_0 n^{k-1}$ edges in $ B_1\cup A_2 \cup (\bigcup_{3\le j\le k, j\neq i}B_j)\cup A_i$, where we used $|A_1||B_2|\ge n^2/8$ and $|B_i|\ge n/2$ for $3\le i\le k$ in the first inequality.
\begin{enumerate}
\item We first greedily find $|W'|$ disjoint horizontal edges such that each of them contains one vertex of $W'$ and no other vertices from $W\cup X \cup V(M_1\cup M_2)$. 
\item Next, we split $W''\cup X$ arbitrarily to $W_1''$ and $W_2''$ of sizes $\lfloor |W''\cup X| /2 \rfloor$ and $\lceil |W''\cup X| /2 \rceil$, respectively. We greedily find $|W''_1|$ disjoint edges such that each of them contains one vertex $u\in W_1''$, one vertex from each of $B_1$ and $A_2$, and no other vertices from $W\cup X \cup V(M_1\cup M_2)$; moreover, if $u\in A_i\subseteq X$, then the edge is taken in $ B_1\cup A_2 \cup (\bigcup_{3\le j\le k, j\neq i}B_j)\cup A_i$.

Finally, we greedily find $|W''_2|$ disjoint edges such that each of them contains one vertex $u\in W_2''$, one vertex from each of $A_1$ and $B_2$, but no other vertices from $W\cup X \cup V(M_1\cup M_2)$; moreover, if $u\in A_i\subseteq X$, then the edge is taken in $A_1\cup (\bigcup_{2\le j\le k, j\neq i}B_j)\cup A_i$.  
\end{enumerate}
The above process is possible because when considering a vertex $u$, the number of vertices that we need to avoid is at most $|V(M_1)| + |V(M_2)|+ k|W| +k\cdot 2(k-2)\e_0 n\le |V(M_1)| + 2k|W|+k\cdot 2(k-2)\e_0 n< 3k^2\e_0 n$ because of \eqref{eq:W}, the facts $|M_1|\le 3k\sqrt\e n$ and $|X|\le 2(k-2)\e_0 n$. Hence these vertices lie in less than $3k^2\e_0 n^{k-1}$ $(k-1)$-sets, so we can always find a desired edge that covers $u$ and avoids all the existing edges.
Let $M_3$ be the matching obtained in this step.
Note that~\eqref{eq:step3} holds by construction.

\medskip
\noindent \textit{Step 4. Balancing the sizes of $B_1^3$ and $A_2^3$.}
Let $m:=|B_1^3|-|A_2^3|$.
We find a matching $M_4$ of size $|m|$ as follows.
If $m\ge 0$, then $M_4$ consists of $m$ disjoint edges from $B^3$ that are disjoint from $M_1\cup M_2\cup M_3$.
Since $(0,\dots, 0)\in EV_k$, this can be done since $H[(0,\dots,0)]$ is almost complete.
Otherwise $M_4$ consists of $|m|$ disjoint edges with location vector $(1,1,0,\dots, 0)$ that are disjoint from $M_1\cup M_2\cup M_3$ -- this is possible because $H[(1,1,0,\dots,0)]$ is almost complete.

After removing $M_4$, the resulting sets $B_1^4$ and $A_2^4$ satisfy $|B_1^4| = |A_2^4|$.
%Now let us bound $|B_2^4| - |B_1^4|$. The definition of $M_3$ implies that
%\[
%-1 \le |B^2_1\cap V(M_3)| - |B^2_2 \cap V(M_3)| \le 0;
%\]
The definition of $M_4$ implies that $ |B^2_1\cap V(M_4)| = |B^2_2 \cap V(M_4)|$.
Together with~\eqref{eq:B_diff} and~\eqref{eq:step3}, this gives
%\begin{align*}
%-1\le |B_2^4| - |B_1^4| &= |B_2^2| - |B_1^2| + |B_1^2\cap V(M_3)| - |B_2^2\cap V(M_3)| \\
%& \le \max\left\{0, n - \sum_{i=1}^k a_i \right\}\le k-3.
%\end{align*}
\begin{align*}
-1\le |B_2^4| - |B_1^4| = |B_2^2| - |B_1^2| + |B_1^2\cap V(M_3)| - |B_2^2\cap V(M_3)| \le k-3.
\end{align*}

\medskip
\noindent \textit{Step 5. Balancing the sizes of $B_1^4$ and $B_2^4$.} Let $t:= |B_2^4|-|B_1^4|$ and thus $-1\le t\le k-3$.
If $t>0$, then $n - \sum_{i=1}^k a_i \ge t> 0$.
Let $S_5$ be a $kt$-set in $V^4$ with $t$ vertices from each of $A_1^4$, $B_2^4$, and $V_i^4$ for $3\le i\le k$ such that $|A^4\setminus S_5|$ is even.
The requirement that $|A^4\setminus S_5|$ is even can be easily fulfilled if any $A_i^4$, $i\ge 3$, is not empty. On the other hand, if all $A_3^4, \dots, A_k^4$ are empty, then since $|B_2^4\setminus S_5|=|B_1^4|$, we have $|A_1^4\setminus S_5|=|A_2^4|$ and consequently,
\[
|A^4\setminus S_5|=|A_1^4\setminus S_5|+|A_2^4| = 2 |A_2^4|
\] is even.
Since $t \le n - \sum_{i=1}^k a_i$, to complete the proof, it suffices to find a perfect matching in $V^5:= V^4\setminus S_5$.
If $t=-1$, then let $S_5$ be a $k$-set with one vertex from each of $B_1^4, A_2^4$ and $V_i^4$ for $3\le i\le k$ such that $|A^4\setminus S_5|$ is even -- this can be achieved by the same argument as in the $t>0$ case. Again it suffices to find a perfect matching in $V^5$.
At last, if $t =0$ then set $S_5=\emptyset$. In this case $|A^5| = |A^4\setminus S_5|$ may be odd; however, it suffices to find a matching of size $|V^5|-1$ in $H[V^5]$.
In summary, it remains to find a perfect matching in $H[V^5]$ if $|A^5|$ is even and a matching of size $|V^5|-1$ otherwise. 
This will follow from Theorem~\ref{thm:even_ext} after we verify its assumptions.

Let $n':=|V^5_1|$ and $H':=H[V^5]$.
Note that
\[
|M_1\cup M_2\cup M_3|\le |M_1| + |W| + |X|\le 3k\sqrt\e n + 2k \sqrt\e n + 2k\e_0 n, \quad \text{and}
\]
\[
|M_4|=||B_1^3| - |A_2^3||\le |B_1| - |A_2| + |M_1\cup M_2\cup M_3|\le 3k\e_0 n,
\]
where $|B_1| - |A_2|\le 2\e n$ by ($\dagger$).
Note that we have $V(M_4)\cap A_i=\emptyset$ for $3\le i\le k$, and when building $M_3$, we may use the vertices of $A_i$, $3\le i\le k$, of size at least $2\e_0 n$ only when we cover the vertices of $W$.
Thus for $3\le i\le k$, if $|A_i^5|\neq 0$, then
\begin{align*}
|A_i^5|&\ge |A_i| - |M_1| - |V(M_2\cup M_3)\cap A_i| - |S_5\cap A_i|\\
&\ge 2\e_0 n - 3k\sqrt\e n - 2k \sqrt\e n - (k-3) \ge \e_0 n \ge \e_0 n',
\end{align*}
and $|A_i^5|\le |A_i|\le n/2<(1-\e_0)n'$.
Moreover, by the choice of $M_4$ and $S_5$, we have $|A_2^5|=|B_1^5|=|B_2^5|$, and thus $|A_1^5|=|A_2^5|=n'/2$.
In particular, this means that $n'$ is even.
Finally, note that
\begin{align*}
n' &= n - |M_1| - |M_2| - |M_3| - |M_4| - |S_5|/k \\ 
&\ge n - 3k\sqrt\e n - 2k\sqrt\e n - 2k\e_0 n - 3k\e_0 n - (k-3)\ge (1-6k\e_0) n.
\end{align*}
So for any $\bfv\in EV_k$, and any vertex $u\in H'(\bfv)$, since $u\notin W$ and $n$ is large enough, we have
\[
\overline{\deg}_{H'(\bfv)}(u)\le \sqrt\e n^{k-1}/2 < \sqrt\e n'^{k-1}.
\]
So we are done by Theorem~\ref{thm:even_ext} with $\eta = \sqrt\e$.
\end{proof}

%\section{Concluding Remarks}
%Anything to put?
\section*{Acknowledgement}
We thank two referees for their careful reading and detailed comments that improve the presentation of the paper. 
In particular, we are grateful to one referee for pointing out an error in the earlier version and suggesting a simpler proof of Lemma~\ref{lem:even_mat} .

\bibliographystyle{plain}
\bibliography{Apr2015}

\end{document}